\DeclareSymbolFont{cyrletters}{OT2}{wncyr}{m}{n}
\DeclareMathSymbol{\Sha}{\mathalpha}{cyrletters}{"58}
\newcommand{\segment}[2]{\subsection{#2}\label{#1}}
\newcommand{\ssegment}[2]{\subsubsection{#2}\label{#1}}
\theoremstyle{definition}
\newtheorem*{prop*}{Proposition}
\theoremstyle{definition}
\newtheorem*{lemma*}{Lemma}
\theoremstyle{definition}
\newtheorem*{thm*}{Theorem}
\theoremstyle{definition}
\newtheorem*{cor*}{Corollary}
\theoremstyle{definition} 
\newtheorem{lemma}[subsubsection]{Lemma}
\newcommand{\Lemma}[2]{\begin{lemma} \label{#1} 
%\marginpar{#1} 
{#2} \end{lemma}}
\theoremstyle{definition} 
\newtheorem{prop}[subsubsection]{Proposition}
\newcommand{\Proposition}[2]{\begin{prop} \label{#1} 
%\marginpar{#1} 
{#2} \end{prop}}
\theoremstyle{definition} 
\newtheorem{thm}[subsubsection]{Theorem}
\newcommand{\Theorem}[2]{\begin{thm} \label{#1} 
%\marginpar{#1} 
{#2} \end{thm}}
\theoremstyle{definition} 
\newtheorem{conj}[subsubsection]{Conjecture}
\theoremstyle{definition}
\newtheorem*{conj*}{Conjecture}
\theoremstyle{definition} 
\newtheorem{cond}[subsubsection]{Condition}
\newcommand{\Condition}[2]{\begin{cond} \label{#1} 
%\marginpar{#1} 
{#2} \end{cond}}
\theoremstyle{definition} 
\newtheorem{rmk}[subsubsection]{Remark}
\newcommand{\disp}[3]{
\theoremstyle{definition}
\newtheorem{{#1}c}[subsection]{#1}
\begin{{#1}c} \label{#2} {#3} \end{{#1}c}
}
\newcommand{\sdisp}[3]{
\theoremstyle{definition}
\newtheorem{{#1}c}[subsubsection]{#1}
\begin{{#1}c} \label{#2} {#3} \end{{#1}c}
}
\let\oldmarginpar\marginpar
\renewcommand\marginpar[1]{\-\oldmarginpar[\raggedleft\footnotesize #1]%
{\raggedright\footnotesize #1}}
\newcommand{\opnm}{\operatorname}
\newcommand{\set}[2]{\big\{ #1 \; \big| \; #2 \big\} }
\newcommand{\Rep}{\operatorname{\bf{Rep}}}
\newcommand{\Vect}{\operatorname{\bf{Vect}}}
\newcommand{\LLie}{\operatorname{\bf Lie}}
\newcommand{\from}{\leftarrow}
\newcommand{\xto}{\xrightarrow}
\newcommand{\xfrom}{\xleftarrow}
\newcommand{\surj}{\twoheadrightarrow}
\newcommand{\gr}{\operatorname{gr}}
\renewcommand{\Im}{\operatorname{Im}}
\newcommand{\Spec}{\operatorname{Spec}}
\newcommand{\Spm}{\operatorname{Spm}}
\newcommand{\Li}{\operatorname{Li}}
\newcommand{\Hom}{\operatorname{Hom}}
\newcommand{\Ext}{\operatorname{Ext}}
\newcommand{\Isom}{\operatorname{Isom}}
\newcommand{\End}{\operatorname{End}}
\newcommand{\Aut}{\mathrm{Aut}\,}
\newcommand{\Lie}{\operatorname{Lie}}
\newcommand{\m}[1]{\mathrm{#1}}
\newcommand{\fk}[1]{\mathfrak{#1}}
\newcommand{\bb}[1]{\mathbb{#1}}
\newcommand{\cl}[1]{\mathcal{#1}}
\newcommand{\la}{\lambda}
\newcommand{\ka}{\kappa}
\newcommand{\si}{\sigma}
\newcommand{\Si}{\Sigma}
\newcommand{\ze}{\zeta}
\newcommand{\ga}{\gamma}
\newcommand{\al}{\alpha}
\newcommand{\be}{\beta}
\newcommand{\om}{\omega}
\newcommand{\Om}{\Omega}
\newcommand{\ep}{\epsilon}
\newcommand{\gl}{\operatorname{\fk{gl}}}
\newcommand{\Gm}{{\mathbb{G}_m}}
\newcommand{\Qp}{{\QQ_p}}
\newcommand{\Zp}{{\ZZ_p}}
\newcommand{\Cc}{\mathcal{C}}
\newcommand{\ZZ}{\bb{Z}}
\newcommand{\nN}{\fk{n}}
\newcommand{\gG}{\fk{g}}
\newcommand{\NN}{\bb{N}}
\newcommand{\QQ}{\bb{Q}}
\newcommand{\RR}{\bb{R}}
\newcommand{\PP}{\bb{P}}
\newcommand{\pP}{\fk{p}}
\newcommand{\Uu}{\mathcal{U}}
\newcommand{\Bb}{\mathcal{B}}
\renewcommand{\AA}{\bb{A}}
\newcommand{\Oo}{\mathcal{O}}
\newcommand{\Kk}{\mathcal{K}}
\newcommand{\Aa}{\mathcal{A}}
\newcommand{\Ee}{\mathcal{E}}
\newcommand{\Pp}{\mathcal{P}}
\newcommand{\Dd}{\mathcal{D}}
\newcommand{\qQ}{\fk{q}}
\newcommand{\Ff}{\mathcal{F}}
\newcommand{\Ll}{\mathcal{L}}
\newcommand{\Tt}{\mathcal{T}}
\newcommand{\Vv}{\cl{V}}
\newcommand{\Ww}{\cl{W}}
\newcommand{\inv}{^{-1}}
\newcommand{\areq}{\ar@{=}}
\newcommand{\suphook}{\ar@{^(->}}
\newcommand{\subhook}{\ar@{_(->}}
\newcommand{\smses}[6]
{
\[
\xymatrix{
1 \ar[r] &
#1 \ar[r]_-{#2} &
#3 \ar[r]_-{#4} &
#5 \ar[r] \ar@/_1.5pc/[l]_-{#6} &
1
}
\]
}
\newcommand{\inj}{\hookrightarrow}
\newcommand{\thrpl}{\PP^1 \setminus \{0,1,\infty\}}
\newcommand{\Fphi}{\boldsymbol {F\phi} }
\newcommand{\dR}{{\rm {dR}}}
\newcommand{\cris}{\m{cris}}
\newcommand{\oneato}{{{1_0}}}
\newcommand{\Words}{\operatorname{Words}}
\newcommand{\ab}{\mathrm{ab}}
\newcommand{\npr}{{n'}}
\newcommand{\Op}{\Oo_\pP}
\newcommand{\Zo}{{Z^o}}
\newcommand{\Eetil}{\widetilde \Ee}
\newcommand{\Aatil}{\widetilde \Aa}
\newcommand{\Ddtil}{\widetilde \Dd}
\newcommand{\ftil}{\widetilde f}
\newcommand{\PL}{\m{PL}}
\newcommand{\ev}{\fk{ev}}
\newcommand{\per}{\operatorname{per}}
\newcommand{\CUI}{\operatorname{FI}}
\newcommand{\Real}{\operatorname{Re}}
\newcommand{\IStdd}{\widetilde {^UI^{F\phi}_\m{Stdd}}}
\newcommand{\IAtil}{\widetilde{^UI_{\Aatil}}}
\newcommand{\atil}{\widetilde a}
\newcommand{\Ftil}{\widetilde F}
\newcommand{\MT}{\operatorname{\mathbf{MT}}}
\newcommand{\Res}{\operatorname{Res}}
\newcommand{\Etil}{\widetilde E}
\newcommand{\unVIC}{\operatorname{unVIC}}
\newcommand{\xyto}[2]{\underset{#2}{\overset{#1}\rightrightarrows}}
\newcommand{\Word}{\opnm{Word}}
\title{Mixed {T}ate motives and the unit equation II}
\author{Ishai Dan-Cohen}
\thanks{This work was supported by Priority Program 1489 of the Deutsche Forschungsgemeinschaft: {\it Experimental and algorithmic methods in algebra, geometry, and number theory}.} 
\date{\today}
\begin{document}

\begin{abstract}

Over the past fifteen years or so, Minhyong Kim has developed a framework for making effective use of the fundamental group to bound (or even compute) integral points on hyperbolic curves. This is the third installment in a series whose goal is to realize the potential effectivity of Kim's approach in the case of the thrice punctured line. As envisioned in \cite{mtmue}, we construct an algorithm whose output upon halting is provably the set of integral points, and whose halting would follow from certain natural conjectures. Our results go a long way towards achieving our goals over the rationals, while broaching the topic of higher number fields.

\bigskip

\noindent
Mathematics subject classification (2010): 11D42, 11G55, 14F35, 14F42, 14G05, 14F30
\end{abstract}

\maketitle

\setcounter{tocdepth}{1}
\tableofcontents

\raggedbottom
\SelectTips{cm}{11}

%%%%%%%%%%%%%%%%
\section{Introduction}%%%%%
%%%%%%%%%%%%%%%

\segment{int1}{}%%%%%%%
This is the third installment in a series \cite{CKtwo,mtmue}\footnote{\emph{Explicit Chabauty-Kim theory for the thrice punctured line in depth two} = \emph{Mixed Tate motives and the unit equation 0}.}  devoted to what may reasonably be described as {\it explicit motivic Chabauty-Kim theory}. `Chabauty-Kim theory' refers to a framework developed by Minhyong Kim for making effective use of the fundamental group to bound, or conjecturally compute, integral solutions to hyperbolic equations. `Motivic' refers to the fact that while Kim's construction, in its original formulation, is $p$-adic \'etale, our methods are motivic. As things currently stand, this limits us to working in the mixed Tate, or Artin--Tate settings, that is, essentially to the projective line with (possibly interesting) punctures. So the adjective `motivic' implies a fairly specific context. While this context may seem narrow from a geometric point of view, it is quite broad from an arithmetic point of view, leading and relating to various interesting questions and conjectures.

`Explicit' refers to the fact that here our emphasis is on algorithms. {\it Explicit} Chabauty-Kim theory, as I see it, is somewhat orthogonal to Chabauty-Kim theory proper. If \textit{Chabauty-Kim theory} is about attempting to prove Kim's conjecture \cite{nabsd}, or at least about formulating and studying a range of related conjectures, \textit{Explicit} Chabauty-Kim theory is about making the theory \textit{explicit}. In particular, in the explicit theory, we allow ourselves to assume conjectures left and right, so long as those affect the halting, and not the construction, of the hoped-for algorithms. 

In this installment, we continue our study of $\thrpl$. We obtain an algorithm for computing the \emph{polylogarithmic Chabauty-Kim loci} (see below) over number fields which obey a certain technical condition. In turn, this technical condition is known for the rationals and follows for general number fields from a conjecture due to Jannsen. We also obtain an algorithmic solution to the unit equation over totally real fields obeying the same condition. Specializing to the case of the rationals, we obtain the algorithm envisioned in Dan-Cohen--Wewers \cite{mtmue}.

\segment{int2}{}%%%%%
We now state our main application in more detail. Let $X = \thrpl$. Below, we construct an algorithm which takes as input an open integer scheme $Z$ (by which we mean an open subscheme of $\Spec \Oo_K$ for $K$ a number field), and outputs a subset of $X(Z)$.

\Theorem{mar1}{
Let $Z$ be a totally real open integer scheme. If our algorithm halts for the input $Z$, then its output is equal to the set $X(Z)$ of integral points of $X$ over $Z$.
}

\noindent
We also state / recall four conjectures: \emph{Zagier's conjecture}, \emph{Goncharov exhaustion (with weak control over ramification)}, the \emph{p-adic period conjecture}, and \emph{convergence of Chabauty-Kim loci for the polylogarithmic quotient}. Finally, we state our technical condition, which we call \emph{Hasse principle for finite cohomology}. We say that $K$ obeys \emph{Kim vs. Hasse} if convergence occurs {\it before} the Hasse principle fails (see below for details). The following proposition motivates the theorem above.

\Proposition{int3}{
Let $Z$ be a totally real open integer scheme with fraction field $K$.
\begin{enumerate}
\item
Assume Zagier's conjecture, Goncharov exhaustion, the $p$-adic period conjecture, and convergence of Chabauty-Kim loci for the polylogarithmic quotient hold for $Z$. Assume $K$ obeys Kim vs. Hasse. Then our algorithm halts for $Z$.
\item
Suppose $Z$ is contained in $\Spec \ZZ$. Assume Goncharov exhaustion, the $p$-adic period conjecture, and convergence of Chabauty-Kim loci for the polylogarithmic quotient hold for $Z$. Then our algorithm halts for $Z$. 
\end{enumerate}
}
\noindent
We refer to Theorem \ref{mar1} and Proposition \ref{int3}, taken together, as the ``equation-solving theorem''; see Theorem \ref{17s} for a more precise statement.

Practical (and unconditional) methods for solving the $S$-unit equation predate this work, and can be found, for instance, in de Weger \cite{deWeger} who uses the theory of logarithmic forms of Baker--W\"usthotz \cite{BakerWuestholz} (see also Everste--Gy\H{o}ry \cite{EversteGyory} for a general discussion). A more recent approach, due to K\"anel--Matchke \cite{KaenelMatschke} is based on the Shimura-Taniyama conjecture. Our primary purpose here is not to compete with these other methods, but rather, to develop Kim's theory in a special case, to explore its interaction with the theory of mixed Tate motives and motivic iterated integrals, and, in subsequent works, to provide new numerical evidence for Kim's conjecture. Also, while our focus here is \emph{explicit}, it is not yet \emph{practical}; see \S\ref{algprec} below.

\segment{}{}%%%%%
We now give a brief indication of our main result, from which the equation-solving theorem follows as a corollary; precise statements, as well as more background, appear in section \ref{Thefirst} below. For this purpose, fix a prime $\pP \in Z$ which we assume to be totally split and recall that there is a commutative diagram like so. 
\[
\xymatrix{
X(Z) \ar[r] \ar[d]_-\ka &
X(Z_\pP) \ar[d]^-{\ka_\pP} \\
\Qp \otimes H^1(U^\PL_{\ge -n}) \ar[r]_-{\Re_\pP} &
H^1(U^{\PL, F\phi}_{\ge -n}). 
}
\]
Here $Z_\pP$ denotes the complete local scheme of $Z$ at $\pP$, isomorphic to $\Spec \Zp$, and $U^\PL_{\ge -n}$ denotes the level-$n$ quotient of the polylogarithmic quotient of the unipotent fundamental group of $X$ at the tangential base point $\vec{1_0}$, a certain quotient of a unipotent, motivic version of the fundamental group. The cohomology variety $H^1(U^\PL_{\ge -n})$ appearing below left is a certain $\QQ$-variety parametrizing torsors for $U^\PL_{\ge -n}$. The vertical map $\ka$ sends an integral point $x$ to the torsor of homotopy classes of paths 
\[
\oneato \to x.
\]
The cohomology variety appearing in the lower-right is a certain $p$-adic variant of the one to its left, based, as the notation suggests, on the theory of filtered $\phi$ modules. In terms of this diagram, we define
\[
X(Z_\pP)_n: = \ka_\pP\inv (\Im \Re_\pP).
\]
We construct an algorithm for computing the locus $X(Z_\pP)_n$ to given $p$-adic precision.

\begin{thm}[c.f. Theorem \ref{MainTh} below] \label{mar2}
Let $Z$ be a totally real open integer scheme, $\pP$ a totally split prime, $n$ a natural number, and $\ep >0$. If our algorithm halts for these inputs, then the functions $\widetilde F^\pP_i$ which the algorithm returns as output take values less than $\ep$ on $X(Z_\pP)_n$.
\end{thm}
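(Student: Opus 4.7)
The plan is to realize the output $\widetilde F^\pP_i$ as $p$-adic truncations, to precision at least $\ep$, of the pullbacks $\ka_\pP^* F_i$ of regular functions $F_i$ on $H^1(U^{\PL, F\phi}_{\ge -n})$ which vanish identically on the image $\Im \reg_\pP$. Since by definition $X(Z_\pP)_n = \ka_\pP\inv(\Im \reg_\pP)$, any such pullback vanishes on $X(Z_\pP)_n$. Combined with a uniform bound of the form $|\ka_\pP^* F_i - \widetilde F^\pP_i|_p < \ep$ on the compact set $X(Z_\pP)$, this immediately yields $|\widetilde F^\pP_i|_p < \ep$ on $X(Z_\pP)_n$, which is the desired conclusion.

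Concretely, I would organize the argument into three steps. First, unwind the motivic component of the algorithm to check that, upon halting, it returns a finite collection of regular functions $F_i$ on $H^1(U^{\PL, F\phi}_{\ge -n})$ whose common zero locus contains $\Im \reg_\pP$; the construction rests on the presentation of $H^1(U^{\PL, F\phi}_{\ge -n})$ in coordinates dual to the generators of $\Lie U^\PL_{\ge -n}$, together with the explicit formulae for the regulator in terms of $p$-adic polylogarithms. Second, express $\ka_\pP^* F_i$ as a locally analytic function on $X(Z_\pP)$ in terms of iterated Coleman integrals, using the standard identification of $\ka_\pP$ with the unipotent Albanese map on the polylogarithmic quotient. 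Third, interpret the tilde on $\widetilde F^\pP_i$ as the result of truncating those iterated-integral expansions, and verify that the internal precision prescriptions of the algorithm force the truncation error to lie below $\ep$ in absolute value uniformly on $X(Z_\pP)$.

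The main obstacle is the third step: propagating $p$-adic precision coherently through the entire pipeline, from the motivic periods, through the change of coordinates between motivic and filtered-$\phi$ presentations of $H^1(U^{\PL, F\phi}_{\ge -n})$, and finally through the Coleman expansions on $X(Z_\pP)$. This requires effective estimates on the coefficients and radii of convergence of the iterated integrals entering $\ka_\pP^* F_i$, together with explicit control of the denominators introduced when inverting the comparison matrices between the de Rham and filtered-$\phi$ data. Once these estimates are in hand, the hypothesis that the algorithm has halted for the input $(Z, \pP, n, \ep)$ means that the chosen internal truncation orders are at least as large as the thresholds furnished by the estimates, so the uniform bound $|\ka_\pP^* F_i - \widetilde F^\pP_i|_p < \ep$ holds on $X(Z_\pP)$. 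Restricting this inequality to $X(Z_\pP)_n$, where $\ka_\pP^* F_i$ vanishes by step one, gives the theorem.
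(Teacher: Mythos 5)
There is a real gap, and it sits exactly where the paper's actual work lies. Your step one asserts that, upon halting, the algorithm ``returns a finite collection of regular functions $F_i$ on $H^1(U^{\PL, F\phi}_{\ge -n})$ whose common zero locus contains $\Im \reg_\pP$,'' and your step three then treats the tilde as pure numerical truncation error. But the algorithm returns no such exact functions, and cannot: the whole construction is designed around the fact that $\Im \reg_\pP$ (equivalently, the true extension subspaces $E_{n'} \subset A(\Zo)_{n'}$ and the dual free generators of $U(\Zo)$) is not algorithmically accessible. What the algorithm computes is the scheme-theoretic image of the \emph{universal} evaluation map $\ev_\m{Everywhere}$ over $U(\Zo)$ in \emph{abstract} coordinates (segment \ref{9a}), converted to the \emph{concrete} near-basis $\widetilde\Ee \cup \Pp$ of iterated integrals by an approximate change-of-basis matrix (segment \ref{8a}), and only then specialized along $I_\m{BC}$. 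Consequently the returned $\widetilde F^\pP_i$ do not vanish on $\Im\reg_\pP$; the existence of nearby exact functions $F^\pP_i$ that do (indeed generate the Chabauty--Kim ideal, Theorem \ref{MainTh}(1)) is precisely what has to be proved. The paper does this in two steps your proposal does not address: (i) the basis-comparison estimate of Proposition \ref{8d}, which bounds the discrepancy between the computable functions $\ftil_w$ (built from the alleged extensions $\widetilde\Ee$) and the ideal, merely abstract functions $f_w$ dual to the true generators --- this, not $p$-adic truncation of Coleman expansions, is the essential source of the $\ep$; and (ii) the argument of segment \ref{10a}, showing that forming the image upstairs over $U(\Zo)$ and then pulling back along $I_\m{BC}$ is compatible with forming the image of the actual evaluation-at-$I_\m{BC}$ map (the chain of Cartesian squares, with the flatness input for the middle square).

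Your final deduction is fine and is the same as the paper's passage from Theorem \ref{MainTh} to the statement: once one has exact functions $F^\pP_i$ vanishing on $X(Z_\pP)_n$ with $\left| \widetilde F^\pP_i - F^\pP_i \right| < \ep$, the ultrametric bound on the locus is immediate. But the plan for producing that pair $(F^\pP_i, \widetilde F^\pP_i)$ --- ``propagate $p$-adic precision through truncations of $\ka_\pP^* F_i$'' --- mislocates the difficulty: the precision of period computations and comparison matrices is the easy, bookkeeping part (and the paper deliberately absorbs it into ``admissible changes in $\ep$''), whereas the gap between the concrete basis $\widetilde\Aa$ and the true basis $\Aa$, and the correctness of the universal-family strategy for computing the image, are the substantive points. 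As written, your step one assumes the conclusion of those two arguments.
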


\segment{}{}%%%%
The main problem of explicit Chabauty-Kim theory is to render the map $\Re_\pP$ computationally accessible; in the case at hand, we proceed as follows. Let $U(Z)$ denote the unipotent part of the fundamental group of the category of mixed Tate motives over $Z$. If $\Zo \subset Z$ is an open subscheme, there's an associated surjection
\[
U(\Zo) \surj U(Z).
\]
As part of the algorithm, we search for a $\Zo$ such that $U(\Zo)$ admits a {\it nice} set of coordinates. More will be said about the role played by $\Zo$ below; for now, let us fix $\Zo$ arbitrarily. A theory of $p$-adic iterated integration due to Coleman and Besser gives rise to a point
\[
I_{BC}: \Spec \Qp \to U(\Zo).
\] 
Our construction revolves around the following diagram.
\[
\xymatrix{
\Spec \Qp \times H^1(U^\PL_{\ge -n}) 
	\ar[r]^-{\Re_\pP} \ar@{=}[d] &
	H^1 (U^{\PL, F\phi}_{\ge -n})
	\ar@{=}[d] \\
\Spec \Qp \times Z^1(U(Z), U^\PL_{\ge -n})^\Gm
	\ar[r]^-{\ev_{I_{BC}}} \ar[d]_-{I_{BC} \times Id} &
	\Spec \Qp \times U^\PL_{\ge -n}
	\ar[d]^-{I_{BC} \times Id} \\
U(\Zo) \times Z^1(U(Z), U^\PL_{\ge -n})^\Gm
	\ar[r]_-{\ev_\m{Everywhere}} &
U(\Zo) \times U^\PL_{\ge -n}
}
\]
Here $Z^1(U(Z), U^\PL_{\ge -n})^\Gm$ denotes a certain space of $\Gm$-equivariant cocycles
\[
U(Z) \to U^\PL_{\ge -n},
\]
and the maps $\ev_{I_{BC}}$, $\ev_\m{Everywhere}$ are evaluation maps. Instead of attempting to compute the scheme-theoretic image $\Im \Re_\pP$ directly, we compute the pullback
\[
\tag{$*$}
(I_{BC} \times Id)\inv(\Im \ev_\m{Everywhere}).
\]

\segment{}{}%%%%%%
The group $U(\Zo)$ possesses certain special functions, known as \emph{motivic iterated integrals}, whose pullbacks along $I_{BC}$ are $p$-adic iterated integrals. The latter may be computed to arbitrary precision thanks to the algorithm of Chatzistamatiou--Dan-Cohen \cite{PItInts}, which we review in section \ref{Num} below. In order to compute $\Im \ev_\m{Everywhere}$ as well as its pullback ($*$) algorithmically, we need coordinates on $U(\Zo)$. Moreover, as the algorithm proceeds, we need to impose different, in fact contradictory, conditions on our coordinate system: to compute the pullback along $I_{BC}$, we need our coordinate functions to be given explicitly in terms of motivic iterated integrals. To compute the image
\[
\Im \ev_\m{Everywhere}
\]
however, we need coordinates compatible with the product on $U(\Zo)$. In the construction that follows, we attempt to bridge this gap; we fail in many ways, but are nevertheless able to make the error incurred arbitrarily small. 

This work does not have significant logical dependence on its predecessors \cite{CKtwo, mtmue}. The reason for this, in part, is that I found it preferable to modify portions of the work done in \cite{mtmue} in preparation for the construction of the algorithm. For instance, our use of the map ``$\ev_\m{Everywhere}$'' here (along with our acceptance of the $p$-adic period conjecture as yet another condition for halting) allows us to carry out the geometric part of the computation in a single step over the rationals and in a manner entirely divorced from arithmetic. In fact, the relationship between the present work and the latter is rather reversed: \cite{mtmue} may be seen as working out a particular example of the algorithm constructed here.

\segment{}{}%%%%%%%
After making precise the conjectures and theorems indicated above in \S\ref{Thefirst}, we begin in segments \ref{-2}--\ref{-3} by studying formal properties of coordinate systems on $U(\Zo)$ which promise to shrink the apparent gap between computable properties of motivic iterated integrals and the desired compatibility with product. The result, which is summarized in propositions \ref{0} and \ref{.5}, consists of conditions on a basis $\Aa$ for the Hopf algebra $A(\Zo)$ of functions on $U(\Zo)$, given as a disjoint union of three subsets
\[
\Aa = \Ee \cup \Pp \cup \Dd,
\]
under which
\[
\Ee \cup \Pp
\]
forms an algebra basis of the polynomial algebra $A(\Zo)$, and the set $\Ee^\lor$ of dual elements forms a set of free generators for the Lie algebra
\[
\nN(\Zo) := \Lie U(\Zo).
\]
Our terminology gives a rough idea of the roles played by these subsets: $\Ee$ consists of {\it extensions}, $\Pp$ of {\it primitive non-extentions}, and $\Dd$ of {\it decomposables}.

\segment{}{}%%%%
Let $U(Z_\pP)$ (or $U(\Oo_\pP)$) denote the unipotent part of the fundamental group of the Tannakian category of mixed Tate filtered $\phi$ modules of Chatzistamatiou--\"Unver \cite{ChatUnv}.\footnote{
The same symbols might be used to denote the unipotent part of the fundamental group of the category of mixed Tate motives over $Z_p$ if such a category exists; but this hypothetical group will not intervene in this paper.
} Let $A(Z_\pP)$ denote the \emph{mixed Tate filtered $\phi$ Hopf algebra}, that is, the Hopf algebra of functions on $U(Z_\pP)$. Unlike the motivic Galois group $U(Z^\circ)$, the filtered $\phi$ Galois  group $U(Z_\pP)$ possesses a canonical set of free generators which give rise, dually, to a set of \emph{standard} basis elements in $A(Z_\pP)$. There is a \emph{realization map}
\[
\m{Re}_p: A(\Zo) \to \prod_{\pP | p} A(Z_\pP).
\]
Given a motivic iterated integral in $A(\Zo)$, we may wish to expand its realization in the standard basis. In segment \ref{4} we upgrade the algorithm of \cite{PItInts} to an algorithm which computes $p$-adic approximations of this expansion; we refer to this algorithm as the \emph{realization algorithm}. Examples of this algorithm are worked out in \cite[\S7.5]{mtmue} for $Z = \Spec \ZZ[1/2]$\footnote{
Actually, when $Z$ is an open subscheme of $\Spec \ZZ$ (and the higher motivic extension groups are hence of dimension $\le 1$), this algorithm may be replaced by a single direct period-computation; see Corwin--Dan-Cohen \cite{CorwinDCI, CorwinDCII}.
}\begin{enumerate}
\item
in segment 7.5.1 (when we compute, in the notation of that paper, the $p$-adic number $(\log^{F\phi} 2)(v_{-1})$), 
\item
in segment 7.5.2 (we compute, for instance, $(\log^{F\phi}2)^2(v_{-2})$), 
\item
and in segment 7.5.3 (computation of $\Li_3^{F\phi}(b)(w)$ for
\[
w \in \{v_{-1}^3, v_{-1}v_{-2}, v_{-2}v_{-1}, v_{-3}\}
).
\]
\end{enumerate}

\segment{}{}%%%
Next comes our ``basis algorithm'' and our ``change of basis algorithm''. In comparison with the sketch of our algorithm in \cite[\S5.7]{mtmue}, the basis algorithm corresponds to step 1 (segment 5.7.1) while the change of basis algorithm is a part of step 2 (segment 5.7.2).\footnote{
Indeed, segment 5.7.2 of loc. cit. is a mixture of our \textit{change of basis} algorithm with our ``cocycle-image-evaluation algorithm''. 
} This material was inspired by Brown \cite{BrownDecomp}.

In segment \ref{6} below we attempt to construct a basis $\Aa$ of iterated integrals for $A(\Zo)$ (varying $\Zo \subset Z$ as we search) which fulfills the conditions of proposition \ref{0}. The result is our basis algorithm. Examples can be found in \cite[\S7.5]{mtmue} when we find a basis of $A(\Zo)_n$ for $n = 1,2,3,4$ consisting of unipotent motivic polylogarithms. For instance, in proposition 7.5.4.1 of loc. cit., we find that a basis for $A(\ZZ[1/2])_4$ is given by
\[
\Bb = \{(\log^U 2)^4, (\log^U 2)\ze^U(3), \Li^U_4(1/2) \}.
\]

When constructing the basis algorithm, one problem we face is that we are unable to verify algorithmically if a given iterated integral in $A(\Zo)_r$  belongs to the subspace
\[
E(\Zo)_r := \Ext^1_{\MT(Z)}(\QQ(0), \QQ(r)) \subset A(\Zo)_r
\]
of extensions. Using the realization algorithm, however, we can bound the distance between a given iterated integral and the extension space, and so bound the error thus incurred. We are thus forced to work with two potentially distinct bases. One basis, denoted by $\widetilde \Aa$, is given concretely and explicitly by motivic iterated integrals, but is imperfect in that its set
\[
\widetilde \Ee \subset \widetilde \Aa
\]
of alleged extensions may actually fail to be extensions. By projecting $\widetilde \Ee$ onto the space of extensions we obtain a second basis, $\Aa$, which is perfect in its fulfillment of the conditions of proposition \ref{0} on the one hand, but is merely \emph{abstract} on the other hand, as its definition is not constructive.

\segment{}{}%%%%%%%
Let us briefly visit segment \ref{T5}, where the construction becomes somewhat intricate. The construction is recursive in $n \ge 2$. As soon as we have a basis $\widetilde \Aa_{\le n}$ of motivic iterated integrals in half-weights $\le n$, we want to also be able to expand an arbitrary iterated integral in half-weight $n$ in the given basis, or, more generally, to compute the inner product of two arbitrary iterated integrals in half-weight $n$ (for the standard inner product $\langle v_i, v_j \rangle = \delta_{i,j}$ induced by this basis).
 We don't hope to be able to do this precisely; instead we aim for an $\ep$-approximation
\[
\langle J, I \rangle_\ep.
\]
Segment \ref{T3} of our algorithm is key in setting the stage for this computation. Under our `Hasse principle', the realization map is injective near the extension groups (c.f. segment \ref{T8}). We may therefore require that the realization of our subset 
\[
\widetilde \Ee_n \subset \widetilde \Aa_n
\]
of near-extensions be linearly independent inside $\prod A(Z_\pP)$; the precise statement is complicated by the fact that our realization map is itself merely an approximation. Computation of the inner products
$
\langle J, I \rangle_\ep
$
for $J \in \Pp_n \cup \widetilde \Dd_n$ reduces to computations in lower weights via the \emph{Goncharov coproduct}, an explicit formula for the coproduct of two motivic iterated integrals due to Goncharov \cite{GonGal}. For the remaining inner products,
$
\langle J, I \rangle_\ep
$
with $J \in \widetilde \Ee_n$, we use the realization algorithm to map the remaining part $I'$ of $I$ and $J$ into $\prod A(Z_\pP)$ and compute there. Our requirement that $\m{Re}_p \; \widetilde \Ee_n$ be linearly independent ensures that the resulting system of linear equations will have a unique solution.

\segment{}{}%%%%%%%
Given our abstract basis
\[
\Aa = \Ee \cup \Pp \cup \Dd
\] 
of $A(\Zo)$, we obtain a set
\[
\Si^o := \Ee^\lor
\]
of free generators for the Lie algebra $\nN(\Zo)$. The set $\Phi$ of words in $\Si^o$ forms a basis for the universal enveloping algebra $\Uu(\Zo)$; its dual
\[
\Ff \subset A(\Zo)
\]
gives us a new basis, which plays nicely with the Hopf-algebra structure; we call such a basis a \emph{shuffle basis}. We also have an exponential map
\[
\exp^\sharp: U(\Zo) \xto{\sim} S^\bullet \nN(\Zo)^\lor
\]
and we may compute the images
\[
\exp^\sharp(f_w)
\]
of the elements $f_w$ of $\Ff$ as Lie-words in $\Si^o$. We do not endeavor to carry this out explicitly here, contenting ourselves with the observation that the procedure is in an elementary sense algorithmic.

More interesting is the need to compare the two bases $\Aa$ and $\Ff$ of $A(\Zo)$. In segment \ref{8a}, we approximate such a comparison by using our imperfect, concrete basis $\widetilde \Aa$ to construct a near-shuffle basis $\widetilde \Ff$, and by computing the associated change-of-basis matrix to given $p$-adic precision. This is our change of basis algorithm. An example is worked out in segment 7.6.3 of \cite{mtmue}.

\segment{}{}%%%%%%
In terms of a set of generators $\Si^o$ for the unipotent motivic Galois group $U(\Zo)$, the problem of computing the image of $\ev_\m{Everywhere}$ becomes purely classical (if not quite formal); we make this precise in segment \ref{9a} below. We let $\nN^\PL$ denote the Lie algebra of the polylogarithmic quotient $U^\PL$ of the unipotent fundamental group of $X$, and we let $\nN(\Si^o)$ denote the free pronilpotent Lie algebra on $\Si^o$. The result is given as a finite family
\[
\{ F^\m{abs}_i \}_i
\]
of elements of 
\[
S^\bullet ( \nN^\PL_{\ge -n} \times \nN(\Si^o)_{\ge -n} )^\lor.
\]

\segment{}{}%%%%%%
The unipotent fundamental group $U(X)$ at the tangent vector $\oneato$ is canonically free prounipotent on two generators $e_0$, $e_1$ corresponding to monodromy about the punctures $0$ and $1$, respectively. As such, its coordinate ring $\Oo(U(X))$ possesses a canonical vector space basis 
$\{\Li_\om\}_\om$ 
where $\om$ ranges over words in the two generators. We abbreviate words in $e_0$, $e_1$ by words in $0$, $1$. The polylogarithmic quotient
\[
U(X) \surj U^\PL
\]
corresponds to the subalgebra generated by elements
\begin{align*}
\log &:= \Li_{0} \\  
\Li_1& := \Li_1, 
\quad \Li_2 := \Li_{10} 
\quad \Li_3 := \Li_{100},
\quad \dots
\end{align*}
In segment \ref{9b} we use the change-of-basis matrix of segment \ref{8a} to convert the elements $F_i^\m{abs}$ into functions on  
\[
U(\Zo) \times U^\PL_{\ge -n}
\]
given as elements $\widetilde F_i$ of the polynomial ring
\[
\QQ[\widetilde \Ee \cup  \Pp, \log, \Li_1, \dots, \Li_n].
\]
There is a natural map
\[
\QQ[\widetilde \Ee \cup  \Pp, \log, \Li_1, \dots, \Li_n] \to \m{Col}(X(Z_\pP))
\]
to the ring of Coleman functions, which we use to obtain the hoped-for family
\[
\{\widetilde F^\pP_i\}_i
\]
of Coleman functions. Having completed the construction of the ``Chabauty-Kim-loci'' algorithm, $\Aa_\m{Loci}$, we prove our main theorem in segment \ref{10a}.

\segment{}{}%%%%%%
In section \ref{Newt} we use Newton polygons to bound the number of roots in small neighborhoods, and in section 6 we review unpublished joint work with Andre Chatzistamatiou devoted to the computation of $p$-adic iterated integrals, on which we've already relied at several points.

We're now ready to construct the equation-solving algorithm of theorem \ref{mar1}. Joint work with David Corwin \cite{CorwinDCI} demonstrates the need for symmetrization with respect to the $S_3$ action on $X$. We set
\[
X(Z_\pP)_n^{S_3}:= \bigcap_{\si \in S_3} 
\si \big(
X(Z_\pP)_n
\big)
\]
(see \S\ref{int7} for the precise definition). We search for points to obtain a gradually increasing list
\[
X(Z)_n \subset X(Z).
\]
At the same time we construct Coleman functions $\widetilde F^\pP_i$ vanishing on $X(Z_\pP)_n^{S_3}$ and use those to obtain a gradually decreasing union of neighborhoods. Thus, roughly speaking, $X(Z)$ is sandwiched 
\[
X(Z)_n \subset X(Z) \subset X(Z_\pP)_n^{S_3}
\]
with $X(Z)_n$ gradually increasing while $X(Z_\pP)_n^{S_3}$ gradually decreases. We stop when the two sides meet. This concludes the construction of our equation-solving algorithm $\Aa_\m{ES}$, and allows us to state and prove the equation-solving theorem (segment \ref{17r}).

\segment{}{}%%%%%
In section \ref{Beyond} we generalize theorem \ref{mar2} to allow arbitrary open integer schemes. Essentially the only difference is that one is forced to replace $X(Z_\pP)$ with the product $\prod_{\pP |p} X(Z_\pP)$. We are unable at this point, however, to obtain an equation-solving algorithm in this generality: to do so we would have to study solutions to systems of locally analytic functions on higher-dimensional spaces.

\segment{}{Near-term goals}%%%%%%%

\ssegment{algprec}{Algorithmic precision}%%%%%%
The algorithmic constructions we make in this installment are precise by mathematical standards, but not by algorithmic standards, which are far more stringent. For instance, we do not attempt to make our $\ep$'s precise: any function of $\ep$ which is algorithmically computable, and which goes to zero with $\ep$, is again denoted by $\ep$ --- we refer to this as an \emph{admissible change in $\ep$}. Such imprecision is common in pure math, but useless for applications. Before going to Sage, we will of course have to compute exact levels of accumulated error as we make our approximations.

We also make no attempt to make our algorithm efficient: whenever we have a countable set, we don't hesitate to search through it arbitrarily. In fact, the problem of making our algorithm efficient interacts with significant, interesting problems of pure math; these include formulating explicit forms of Zagier's conjecture (available so far in only very special cases), and more precise versions of Goncharov's conjecture, at least with respect to ramification. Avoiding redundancy in our search through the set of iterated integrals is another interesting problem. Indeed, as Francis Brown has pointed out, as long as we limit ourselves to working with the polylogarithmic quotient, constructing a basis for all of $A(Z)_{\le n}$ is huge overkill: any $\Gm$-equivariant map
\[
U(Z) \to U(X)^\PL_{\ge -n}
\]
must factor through a small quotient of $U(Z)$ (easily computed in terms of abstract coordinates). A careful study of this quotient should yield a conjecture which is both much weaker than Goncharov's conjecture, and much more efficient for us.\footnote{
We alert the reader to the forthcoming works \textit{The Goncharov quotient in computational Chabauty-Kim theory I and II} by the author and David Corwin in which this is carried out for open subschemes of $\Spec \ZZ$ and new numerical results are obtained.
}

With regard to the endeavor to produce actual code, we would expect to push the computational boundary gradually, starting with very special cases in which the conjectures of Zagier and Goncharov are relatively well understood.

\ssegment{}{Comparison with Brown's method}%%%%%% 
In the spring of 2014 I visited Francis Brown at the IHES in order to discuss the previous installment in this series \cite{mtmue}. Our meeting was inspiring and reassuring and helped me in developing the algorithm presented below.

Since then, Brown has made his own contribution to the subject \cite{BrownUnit} in which he develops a method (or a kind of blueprint) for constructing {\it many} polylogarithmic functions on the $p$-adic points of $\thrpl$ over an open subscheme of $\Spec \ZZ$ which vanish on integral points, at least when there are {\it enough} integral points. His idea is that if Goncharov's conjecture is false, there should actually be {\it more} such functions available, increasing the chances of isolating the set of integral points. His assumption that $X$ has many points replaces our reliance on Goncharov's conjecture for halting, and one of his goals, which he achieves in several examples, is to circumvent our construction of a basis of the mixed Tate Hopf algebra $A(Z)$. A particularly satisfying outcome is a more economic and aesthetic construction of the polylogarithmic function constructed in \cite{mtmue}.\footnote{
I should point out, however, that the purpose of the present work is somewhat different from Brown's work. Our goal here is to construct an actual algorithm, complete with precise criteria for halting. Moreover, its output should consist of functions that vanish not only on integral points, but also on the (a priori larger) loci defined by Kim's general theory. This ensures that the result may be used to produce numerical evidence for Kim's conjecture on the one hand, and provides an example of how Kim's framework for studying integral points may be made fully explicit and algorithmic on the other hand.}

Work remains to be done in comparing Brown's construction with ours, and hopefully, in harnessing the power of both approaches to construct more efficient, and more enlightening, algorithms.
\ssegment{}{Beyond totally real fields}%%%%%%
Most of the work completed here applies to arbitrary open integer schemes which obey {\it Kim vs. Hasse}. For the final application, however, we are limited to the totally real case. As mentioned above, in order to go further, we would have to develop methods for computing solutions to systems of polylogarithmic functions in higher dimensions.

\ssegment{}{Beyond the polylogarithmic quotient}%%%%%
Beyond the polylogarithmic quotient, the motivic Selmer variety
\[
H^1(G(Z), U(X)_{\ge-n})
\]
is still canonically isomorphic to the space
\[
Z^1(U(Z), U(X)_{\ge-n})^\Gm
\]
of $\Gm$-equivariant cocycles. Explicit computation of this space is complicated however by the fact that the action of $U(Z)$ on $U(X)$ is highly nontrivial. This task is urgent for at least two reasons. Obviously, replacing the polylogarithmic quotient with the full unipotent fundamental group in our current algorithm would enable us to weaken our version of Kim's conjecture. More interesting, perhaps, would be the possibility of going beyond our three punctures $\{0,1,\infty\}$ to more general punctures, including possibly punctures that are not rational over the base-field in the context of mixed Artin-Tate motives.

\segment{ak}{Acknowledgements}%%%%%%
I would like to thank Stefan Wewers for helpful conversations during the conference on multiple zeta values in Madrid in December of 2014. I would like to thank Minhyong Kim, Amnon Besser, Francis Brown, Francesc Fit\'e, Go Yamashita for helpful conversations and email exchanges. I would like to thank Cl\'ement Dupont for long conversations during our time in Sarrians, and for pointing out a very helpful counterexample (see the appendix). I would like to thank Rodolfo Venerucci for conversations about finite cohomology. I would like to thank Jochen Heinloth and Giuseppe Ancona for help improving my presentation of the results. I would like to thank David Corwin for a careful reading and many helpful comments; moreover, in the course of our joint work \cite{CorwinDCI}, we discovered that conjecture \ref{int8} was false as stated in a previous draft. Finally, I wish to thank the referees for their helpful comments and suggestions.

%%%%%%%%%%%%%%%%
\section{Conjectures and theorems}%%%%%%%%%%
\label{Thefirst}%%%%%%%%%%%%%%%
%%%%%%%%%%%%%%%%

We begin with a brief review of background material (unipotent fundamental group, Kim's conjecture, motivic iterated integrals, filtered $\phi$ iterated integrals, $p$-adic periods). For a more detailed exposition, tailored specifically to our applications, we refer the reader to Dan-Cohen--Wewers \cite{mtmue}.

\segment{int4}{A motivic variant of Kim's construction}%%%%

\ssegment{int5}{}%%%%%
The prounipotent completion of the fundamental group of $X$ has a motivic precursor, known as the \emph{unipotent fundamental group}, a unipotent group object of the category of mixed Tate motives constructed by Deligne--Goncharov \cite{DelGon} whose various realizations had previously been studied by Deligne \cite{Deligne89}. We use the tangent vector $\oneato$ as base-point, and denote the resulting group simply by $U(X)$; see \S15 of Deligne \cite{Deligne89} for the use of tangential base-points in the various realizations. The unipotent fundamental group of $\Gm$ is equal to (the covariant total space of) $\QQ(1)$. The natural inclusion
\[
\thrpl \inj \Gm
\]
induces a surjection of unipotent fundamental groups
\[
U(X) \surj \QQ(1).
\]
Let $N$ denote its kernel. Then according to Deligne \cite[\S16]{Deligne89}, the Lie algebra of
\[
U(X)^\PL := U(X) / [N,N]
\]
is canonically a semidirect product
\[
\nN(X)^\PL = \QQ(1) \ltimes \prod_{i=1}^\infty \QQ(i).
\]
We write $\nN(X)^\PL_{\ge -n}$ for the quotient
\[
\QQ(1) \ltimes \prod_{i=1}^n \QQ(i),
\]
of $\nN(X)^\PL$, and $U^\PL_{\ge -n}$ for the corresponding quotient of $U(X)$. There are also associated quotients
\(
({_bP_\oneato})^\PL_{\ge -n}
\)
of the path torsors $_bP_{\oneato}$
 obtained by pushing out along the quotient map of
\[
U \surj U^\PL_{\ge -n}.
\] 
All this holds over $\Spec \ZZ$.

\ssegment{int6}{}%%%
Let $Z \subset \Spec \Oo_K$ be an open integer scheme. Sending a point $b\in X(Z)$ to the torsor of {\it polylogarithmic paths} $({_bP_\oneato})^\PL_{\ge -n}$ defines a map
\[
\ka: X(Z) \to H^1(U^\PL_{\ge -n})(\QQ)
\]
to the set of rational points of a finite-type affine $\QQ$-scheme $H^1(U^\PL_{\ge -n})$ which parametrizes such torsors, the \emph{polylogarithmic Selmer variety}; see \S\ref{bcb1} below for a precise definition.

There is also a local $p$-adic version. We fix a closed point $\pP$ of $Z$ which we assume to be totally split for simplicity. We write $\Op$ instead of $\ZZ_p$ when we wish to emphasize the $\Oo_Z$-algebra structure, and similarly for $K_\pP = \Qp$. We denote $\Spec \Op$ by $Z_\pP$. We obtain a map
\[
\ka_\pP: X(\Op) \to H^1(U_n^{\PL, F\phi})(\Qp)
\]
to the set of $\Qp$-points of the \emph{filtered-$\phi$ polylogarithmic Selmer variety} $H^1(U_n^{\PL, F\phi})$. The set $X(\Op)$ may be viewed as the set of $\Qp$-points of the rigid analytic space (for instance) obtained from $\PP^1_\Qp$ by removing the residue disks about $0$, $1$, and $\infty$. Viewed this way, the map $\ka_\pP$ is locally analytic but not rigid analytic; rather, it is given in coordinates by Coleman functions. 

Connecting the filtered-$\phi$ and motivic versions is a map of $\Qp$-schemes
\[
\Re_\pP:
\Qp \otimes H^1(U^\PL_{\ge -n})
\to
H^1(U^{\PL, F\phi}_{\ge -n})
\]
induced by $p$-adic de Rham realization, which forms a commuting square
\[
\xymatrix{
X(Z) \ar[r] \ar[d] &
X(\Op) \ar[d]^{\ka_\pP} \\
 H^1(U^\PL_{\ge -n})(\Qp) \ar[r]_-{\Re_\pP} &
H^1(U^{\PL, F\phi}_{\ge -n})(\Qp). 
}
\]

\ssegment{int7}{}%%%%%%%%%
The following conjecture was formulated jointly with David Corwin in \cite{CorwinDCI}. Let $\m{Col}(X(\Op))$ denote the ring of Coleman functions and consider the induced maps of $\Qp$-algebras
\[
\xymatrix{
 &
\m{Col} \big(X(\Op) \big)
 \ar@{<-}[d]^{\ka^\sharp_\pP}
 \\
\Qp \otimes \Oo 
\big(
 H^1(U^\PL_{\ge -n})
 \big)
  \ar@{<-}[r]_-{\Re^\sharp_\pP} 
 &
\Oo
\big(
H^1(U^{\PL, F\phi}_{\ge -n})
\big). 
}
\]
There is a natural action of the symmetric group $S_3$ on $\thrpl$, hence also on $\m{Col} \big(X(\Op) \big)$. We let 
$\ka_\pP^\sharp (\ker \Re_\pP^\sharp)$ 
denote the ideal of $\m{Col} \big(X(\Op) \big)$ generated by the image of $\ker \Re_\pP^\sharp$ and we let 
\[
\ka_\pP^\sharp (\ker \Re_\pP^\sharp) S_3
\]
denote the ideal generated by its orbit. In down to earth terms, this means that we close the set of generators $\{F_i(z)\}_i$ of the smaller ideal under the two operations 
\begin{align*}
\tag{*}
F_i \mapsto
F_i(1-z),
&&
\mbox{and}
&&
F_i \mapsto
F_i \left( \frac{1}{z} \right).
\end{align*}

We first define the \emph{polylgarithmic Chabauty-Kim locus at level $n$}
\[
X(\Op)_n \subset X(\Op)
\]
to be the vanishing locus (not a priori reduced) of the smaller ideal $\ka_\pP^\sharp (\ker \Re_\pP^\sharp)$. The polylogarithmic Chabauty-Kim loci form a nested sequence
\[
X(\Op) \supset X(\Op)_1 \supset X(\Op)_2 
\supset \cdots \supset X(Z).
\]
We note the following theorem, which is a direct consequence of the results of Kim \cite{KimTangential} via Soul\'e's \'etale regulator isomorphism \cite{SouleHigher}.

\begin{thm*}[Kim]
Suppose $Z$ is a totally real open integer scheme and let $\pP \in Z$ be any prime. Then for $n$ sufficiently large, $\Im \Re_\pP$ is contained in a subscheme of $H^1(U^{\PL, F\phi}_{\ge -n})$ of strictly lower dimension.
\end{thm*}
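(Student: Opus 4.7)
The plan is to prove the theorem by a weight-graded dimension count. The key input is that in the mixed Tate setting both Selmer varieties in the diagram are affine spaces whose dimensions decompose according to the weights of $\nN^\PL_{\ge -n}$, and that the regulator respects this decomposition; combining this with Borel's theorem on $K$-theoretic ranks (via Soulé) and a standard filtered-$\phi$ computation, I would bound the dimension of the image and compare to the ambient.

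First, I would set up the weight decomposition. Since $U^\PL_{\ge -n}$ has Lie algebra $\QQ(1) \ltimes \prod_{i=1}^n \QQ(i)$ and $\Ext^{\ge 2}$ of Tate objects vanishes both in $\MT(Z)$ and in the category of filtered $\phi$-modules over $\Qp$, the varieties $H^1(U^\PL_{\ge -n})$ and $H^1(U^{\PL, F\phi}_{\ge -n})$ are affine spaces of dimension $\sum_{i=1}^n \dim H^1(G_?, \QQ(i))$ in their respective settings. Because $\reg_\pP$ is weight-preserving, its image sits inside the induced filtration, giving
\[
\dim \Im \reg_\pP \;\leq\; \sum_{i=1}^n \dim \Im \reg_{\pP,i}
\]
where $\reg_{\pP,i}$ is the graded piece in weight $i$.

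Next I would compute each factor. The target piece $H^1_{F\phi}(\QQ(i))$ is $1$-dimensional for each $i \geq 1$ by direct computation in filtered $\phi$-modules over $\Qp$, giving $\dim H^1(U^{\PL, F\phi}_{\ge -n}) = n$. For the source, Soulé's regulator isomorphism identifies $H^1(G_Z, \QQ(i)) \otimes \Qp$ with $H^1_f(G_K, \Qp(i))$ up to torsion, and Borel's theorem yields, for a totally real $Z = \Spec \Oo_K[1/S]$ with $r_1 = [K:\QQ]$ real places: $\dim H^1(G_Z, \QQ(i)) = r_1$ for $i \geq 3$ odd, $=0$ for $i \geq 2$ even, and $=r_1 + \#S - 1$ for $i=1$. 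The crucial consequence is that for even $i \geq 2$ the source already vanishes, forcing $\dim \Im \reg_{\pP, i}=0$ in those weights, while in all other weights the image is at most $1$-dimensional (bounded by the target).

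Combining these bounds gives
\[
\dim \Im \reg_\pP \;\leq\; 1 + \#\{i \in [3,n]\colon i \text{ odd}\} \;\leq\; \lfloor n/2 \rfloor + 1,
\]
which is strictly less than $n = \dim H^1(U^{\PL, F\phi}_{\ge -n})$ as soon as $n \geq 3$; hence $\Im \reg_\pP$ lies in a proper closed subscheme. The main obstacle, I expect, is the rigorous reduction to graded pieces for the non-abelian quotient $U^\PL_{\ge -n}$: one must justify that $\dim H^1$ of the Selmer variety adds along the weight filtration and that $\reg_\pP$ truly respects this filtration. Both facts are standard in the mixed Tate context, resting on the vanishing of $\Ext^{\ge 2}$ of Tate objects in both categories, but since $U^\PL_{\ge -n}$ is non-abelian one should invoke the weight-filtration spectral sequence for $H^1$ of a unipotent group rather than a naive direct-sum decomposition.
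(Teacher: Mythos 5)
Your central step is not justified, and in fact fails: the inequality $\dim \Im \reg_\pP \le \sum_i \dim \Im \reg_{\pP,i}$, with the even graded pieces contributing $0$ because the global $\Ext$-groups vanish there, does not hold for the non-abelian polylogarithmic quotient. The localization of a global cocycle is not confined to the locus where the even-weight coordinates vanish: those coordinates receive contributions from \emph{brackets} of lower odd-weight classes. Concretely, a point of the global Selmer variety is a $\Gm$-equivariant homomorphism $\phi\colon \nN(Z)\to \nN^\PL_{\ge -n}$ from the free graded Lie algebra on generators in weights $-1,-3,-5,\dots$ (Borel--Soul\'e), and $\reg_\pP$ is evaluation at the Besser--Coleman point. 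Writing $\phi(\sigma_{1,j})=a_j e_0+b_j e_1$ for two weight $-1$ generators, the weight $-2$ coordinate of $\phi(I_{\mathrm{BC}})$ is a $p$-adic period times $a_1b_2-a_2b_1$, which is not identically zero; this is precisely how the nontrivial weight-two Chabauty--Kim function of Dan-Cohen--Wewers arises over $\ZZ[1/\ell]$, where $\Ext^1(\QQ(0),\QQ(2))=0$ and yet the image is not contained in $\{\Li_2=0\}$. For a real quadratic field with one prime inverted ($\dim A(Z)_1=2$, two generators in weight $-3$), the same computation shows that at level $n=3$ the image is (for generic periods) dense in the four-dimensional local Selmer variety, whereas your bound would give dimension at most $3$. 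So summing per-weight bounds on the abelianized regulators does not bound the image of the non-abelian map; the weight filtration only controls source and target, not the image.

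More structurally, no dimension count of this shape can prove the statement for general totally real $K$: each odd weight $\ge 3$ contributes $r_1$ (not $1$) to the global Selmer variety, so its dimension grows like $r_1 n/2$, which for $r_1\ge 2$ eventually meets or exceeds the local dimension (which is $n+1$, not $n$, since the weight $-1$ piece of $\nN^\PL$ is two-dimensional). Note also that the paper does not prove this theorem at all: it is quoted as a direct consequence of the results of Kim in \cite{KimTangential}, combined with Soul\'e's regulator isomorphism, and Kim's argument there is a genuinely finer analysis of the localization map for the polylogarithmic quotient, not a graded dimension count. Your approach would need to be replaced by (or reduced to) that argument to close the gap.
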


\noindent
Recall from Kim \cite{kimii} that the map $\ka_\pP$ has dense image. So as soon as we have a nonzero function on $H^1(U^{\PL, F\phi}_{\ge -n})$ vanishing on $\Im \Re_\pP$, the associated locus will be finite:

\begin{cor*}[Kim]
Suppose $Z$ is a totally real open integer scheme, and assume $\pP \in Z$ is totally split. Then for $n$ sufficiently large, the associated polylogarithmic Chabauty-Kim locus $X(\Op)$ is finite.  
\end{cor*}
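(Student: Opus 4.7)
The plan is to deduce the corollary almost mechanically from the preceding theorem and the two facts recalled from Kim \cite{kimii}. By the theorem, for $n$ sufficiently large there exists a nonzero algebraic function $f$ on $H^1(U^{\PL,F\phi}_{\ge -n})$ whose vanishing locus contains the image of $\reg_\pP$. Pulling back along $\ka_\pP$, I obtain a Coleman function
\[
g := \ka_\pP^* f \in \m{Col}(X(\Op))
\]
which by construction vanishes on
\[
X(\Op)_n = \ka_\pP\inv (\Im \reg_\pP).
\]

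First I would argue that $g$ is not identically zero. This is where the density statement enters: since $\ka_\pP$ has dense (Zariski) image in $H^1(U^{\PL,F\phi}_{\ge -n})$ and $f$ is nonzero, its pullback $g$ must also be nonzero. (Concretely, the image of $\ka_\pP$ meets the nonvanishing locus of $f$.)

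Next I would invoke the general fact that the zero locus of a nonzero Coleman function on a residue disk of a smooth curve over $\Op \cong \Zp$ is finite --- this is an immediate consequence of the $p$-adic Weierstrass preparation theorem applied to the local power-series expansions that define a Coleman function. Since $\pP$ is totally split we have $\Op \cong \Zp$, so $X(\Op)$ decomposes as a \emph{finite} disjoint union of residue disks; summing over these disks shows that the zero locus of $g$ in $X(\Op)$ is finite. As this zero locus contains $X(\Op)_n$, the corollary follows.

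The only non-formal ingredient is the density of $\Im \ka_\pP$ (cited from Kim \cite{kimii}) together with the fact that $\ka_\pP$ pulls algebraic functions back to Coleman functions; once these are in hand, the argument is purely a bookkeeping exercise of combining Weierstrass preparation with the dimension-drop in the theorem, so I would not expect a serious obstacle.
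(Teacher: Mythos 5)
Your argument is correct and is essentially the paper's own: the dimension drop from Kim's theorem yields a nonzero algebraic function vanishing on $\Im \reg_\pP$, which pulls back along $\ka_\pP$ to a Coleman function on $X(\Op)$ vanishing on $X(\Op)_n$, nonzero by the (Zariski) density of $\Im \ka_\pP$, and hence with finite zero locus. The only difference is that you spell out the last step (Weierstrass preparation on the finitely many residue disks) which the paper leaves implicit in the standard finiteness property of nonzero Coleman functions.
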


\noindent
We define the \emph{symmetrized polylgarithmic Chabauty-Kim locus at level $n$}
\[
X(\Op)_n^{S_3} \subset X(\Op)
\]
to be the vanishing locus (not a priori reduced) of the ideal $\ka_\pP^\sharp (\ker \Re_\pP^\sharp)S_3$. Stretching Kim's conjecture from \cite{nabsd} somewhat, we propose the following.

\begin{conj} 
\textit{Convergence of polylogarithmic loci, joint with David Corwin.}
\label{int8}
Let $Z$ be a totally real open integer scheme, and $\pP \in Z$ a totally split prime. View $X(Z)$ as a locally analytic space over $\Qp$ with reduced structure. Then for $n$ sufficiently large, the associated polylogarithmic Chabauty-Kim locus satisfies
\[
X(\Op)_n^{S_3} = X(Z).
\]
\end{conj}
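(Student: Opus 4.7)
The inclusion $X(Z) \subset X(\Op)_n$ holds for every $n$, and is in fact built into the setup: for $x \in X(Z)$ the torsor of polylogarithmic paths $({_xP_{\oneato}})^\PL_{\ge -n}$ is defined motivically, and its syntomic realization is precisely $\ka_\pP(x)$, so the commutative square places $\ka_\pP(x)$ inside $\Im \reg_\pP$. Hence the real content is the reverse inclusion $X(\Op)_n \subset X(Z)$ for some $n$.

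The next observation is that, by the corollary to Kim's theorem recalled in segment \ref{int7}, $X(\Op)_n$ is finite for $n$ sufficiently large. As the sequence $\{X(\Op)_n\}$ is nested and decreasing, and each of its terms contains the fixed set $X(Z)$, the sequence of finite sets must stabilize at some level $n = N_0$, producing a finite set $S \supset X(Z)$. The conjecture is then equivalent to $S = X(Z)$.

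The plan for the essential step is to produce, for each $x \in X(\Op) \setminus X(Z)$ and for $n$ large, a Coleman function on $X(\Op)$ arising from an algebraic function on $H^1(U^{\PL,F\phi}_{\ge -n})$ that vanishes on $\Im \reg_\pP$ but not at $x$. Kim's theorem already supplies nontrivial such vanishing functions (cutting dimension), but one would need to iterate and control their joint zero loci, showing that as $n$ grows the common zero locus of Coleman functions pulled back from $H^1(U^{\PL,F\phi}_{\ge -n})$ contracts exactly onto $\ka_\pP(X(Z))$. Concretely, one can try to identify $\Im \reg_\pP$ scheme-theoretically as the Zariski closure of $\ka_\pP(X(Z))$, using a combination of (i) the motivic factorization $\ka_\pP = \reg_\pP \circ \ka$, (ii) the surjectivity of $\ka$ onto a Zariski-dense subset of its target (a motivic analogue of the density of $\ka_\pP$ recalled above from \cite{kimii}), and (iii) the $p$-adic period conjecture to descend scheme-theoretic information from the motivic side to the $F\phi$-side.

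The main obstacle, as the paper itself acknowledges, is transcendental: without a $p$-adic period conjecture (and without Goncharov exhaustion to guarantee that motivic cocycles are sufficiently plentiful), one has no means to identify $\Im \reg_\pP$ with the closure of $\ka_\pP(X(Z))$ rather than with some strictly larger subscheme. In the totally real case, Soul\'e's nonvanishing of higher $K$-theoretic regulators and Kim's dimension bound make the dimensional part of the problem tractable, but the final identification appears to require either a non-abelian Leopoldt-type statement for $\nN^\PL(Z)$ or an unconditional polylogarithmic fragment of the $p$-adic period conjecture. I therefore expect this last step to remain genuinely conjectural, consistent with the status of the statement as Conjecture \ref{int8} rather than a theorem.
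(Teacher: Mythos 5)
The statement you were asked about is Conjecture \ref{int8}; the paper does not prove it, and indeed offers only motivation: the theorem and corollary of Kim recalled in segment \ref{int7} (finiteness of $X(\Op)_n$ for $n$ large, via the growing codimension of $\Im \reg_\pP$ and the density of $\ka_\pP$), together with the remark that the restriction to the polylogarithmic quotient is a genuine strengthening of Kim's conjecture from \cite{nabsd}. Your proposal is consistent with this: the easy inclusion $X(Z) \subset X(\Op)_n$ from the commutative square, the finiteness of the loci for $n$ large, and the stabilization of the nested sequence of finite sets are all correct and coincide with the paper's motivating discussion, and your conclusion that the reverse inclusion is genuinely open is exactly the status the paper assigns to the statement. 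So there is no gap to fault relative to the paper, but also, as you say yourself, no proof --- nor does the paper claim one.

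Two small cautions about your sketched strategy. First, the point $x$ in your third paragraph should range over $X(\Op)_n \setminus X(Z)$ (or the stabilized set $S \setminus X(Z)$), not $X(\Op) \setminus X(Z)$. Second, identifying $\Im \reg_\pP$ scheme-theoretically with the Zariski closure of $\ka_\pP(X(Z))$ would not by itself give $X(\Op)_n = X(Z)$: the locus is $\ka_\pP\inv$ of that image, and the preimage of the closure of $\ka_\pP(X(Z))$ under a Coleman-analytic map can strictly contain $X(Z)$ (extra common zeros of the relevant Coleman functions are exactly what Kim's conjecture rules out). So that intermediate goal is neither known nor obviously sufficient; the conjecture is strictly stronger than any dimension-counting or density statement currently available, which is why the paper treats it as an input to the halting statements (Proposition \ref{int3}, Theorem \ref{17s}) rather than as something to be established.
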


\begin{rmk}%%%%%%
The generalization from the rational to the totally real case should be harmless. By restricting attention to the polylogarithmic quotient, however, we are relying on a proper strengthening of Kim's conjecture. Nevertheless, since the codimension of $\Im \Re_\pP$ goes to infinity already for the polylogarithmic quotient, much of the motivation for Kim's conjecture does hold for the polylogarithmic quotient; see \cite{CorwinDCI} for a discussion of the role played by the $S_3$-orbit. Finally, our interpretation of $X(\Oo_\pP)_n$ as a potentially nonreduced space, implying that there should be no double roots for $n$ sufficiently large, is not discussed explicitly in \cite{nabsd}. 
\end{rmk}

\segment{int10}{Iterated integrals, $p$-adic periods, statement of arithmetic conjectures}%%%%%

\ssegment{int11}{}%%%%%%%%
We begin by reviewing those properties of mixed Tate motives that we use. This material may be found, for instance, in Deligne-Goncharov \cite{DelGon}. We continue to work with an open integer scheme $Z$. Let $\MT(Z)$ denote the category of (unramified) mixed Tate motives over $Z$ with $\QQ$-coefficients. The category $\MT(Z)$ is $\QQ$-Tannakian. It has a special object $\QQ(1)$ of rank $1$. Every simple object is isomorphic to a unique $\QQ(n) := \QQ(1)^{\otimes n}$. Each object is equipped with an increasing filtration $W$, \emph{the Weight filtration}. (Since all the weights that occur are even, we also work with \emph{half-weights}, which are half the usual weights. These will be denoted by subscripts.) The functor
\[
\MT(Z) \to \Vect(\QQ)
\]
sending 
\[
E \mapsto \bigoplus \Hom(\QQ(i), \gr_{-2i}^W E)
\]
is a $\QQ$-valued fiber functor, with associated group of the form
\[
G(Z) = U(Z) \rtimes \Gm
\]
with $U(Z)$ free prounipotent. From generalities of mixed Tate categories, we have canonical isomorphisms
\[
U(Z)^\m{ab} = \bigoplus_{i\ge 1} \Ext^1_Z (\QQ(0), \QQ(n))^\lor.
\]
We have (highly nontrivial) canonical isomorphisms
\[
K_{2n-1}^{(n)}(Z) \xto{\sim} \Ext^1_Z(\QQ(0), \QQ(n)),
\]
and a computation of the dimensions of these $K$-groups via real-analytic methods due to Borel \cite{Borel53, Borel77}:
\[
\dim K_{2n-1}^{(n)}(Z) =
\left\{
\begin{matrix}
r_1+r_2 & \mbox{for } n \mbox{ odd } \ge 3 \\
r_2 & \mbox{for } n \mbox{ even } \ge 2,
\end{matrix}
\right.
\]
where $r_1$ (resp. $r_2$) denotes the number of real (resp. complex) places.

We let $\nN(Z)$ denote the Lie algebra of $U(Z)$, $\Uu(Z)$ its completed universal enveloping algebra, and $A(Z)$ the coordinate ring of $U(Z)$. Recall that the natural map
\[
\Uu(Z)^\lor \to A(Z)
\]
is an isomorphism of $\QQ$-vector spaces. 

\ssegment{int12}{}%%%%%%
Our discussion of iterated integrals applies to the complement in $\PP^1_Z$ of any divisor $\bf D$ which is a union of sections of 
\[
\PP_Z^1 \to Z
\]
and is \'etale over $Z$; we continue to use the letter $X$ which now denotes $\PP_Z^1 \setminus \bf D$ and we refer to the components of $\bf D$ as \emph{punctures}. We assume $\infty \in \bf D$. We say that a section of a vector bundle is \emph{nowhere vanishing} if its image in every closed fiber is nonzero. We define a \emph{base-point} to be either an integral point or a nowhere vanishing section of the normal bundle to one of the punctures. If $a$ is a base-point, we denote the unipotent fundamental group of $X$ at $a$ by $U_a(X)$. The unipotent fundamental group may be thought of as a prounipotent group object of $\MT(Z)$, or, after applying the canonical fiber functor, as a prounipotent $\QQ$-group equipped with an action of $G(Z)$, and we do not distinguish between these two points of view when we see no cause for confusion. 

If $b$ is a second base-point, we denote the unipotent path torsor by ${_bP_a}$; the latter may be thought of internally as a torsor-object of $\MT(Z)$ or externally as a $G(Z)$-equivariant torsor.

\ssegment{int13}{}%%%%%%%
After forgetting the $G(Z)$-action, each unipotent path torsor ${_bP_a}$ is trivialized by a special $\QQ$-rational path
\[
_bp_a^\dR \in {_bP_a}(\QQ),
\]
and the fundamental group $U_a(X)$ is free on the set of logarithmic vector fields dual to the $1$-forms 
\[
\om_c = \frac{dt}{t-c}
\]
for $c$ a component of ${\bf D}_f:= {\bf D} \setminus \infty$; this is proved by Deligne \cite{Deligne89} when $K = \QQ$ and by Goncharov \cite{GonGal} in general.\footnote{
It is quite crucial that we work rationally here, since we will be using motivic iterated integrals to construct generators of the Hopf algebra $A(Z) = \Uu(Z)^\lor$ as a $\QQ$-algebra.
}
\footnote{
After tensorization with $K$, the canonical fiber functor on $\MT(Z)$ becomes canonically isomorphic to the de Rham fiber functor. A fact, which is perhaps underemphasized in the literature, however, is that the usual Tannakian interpretation of $U_a(X)$ in terms of unipotent connections is unavailable over the rationals unless $K = \QQ$. Thus, our ${_bp_a^\dR}$ is a \textit{rational form} of Deligne's canonical de Rham path.
} If $\om=(\om^1, \dots, \om^r)$ is a sequence of such differential forms, we let $f_\om$ denote the associated function (see section 2 of \cite{mtmue} for generalities on free prounipotent groups). 

We say that the datum $(a;\om;b)$ is \emph{combinatorially unramified} if the associated reduced divisors are \'etale over $Z$.\footnote{
If $a$ is a base-point, we let $a_0$ denote the {\it location} of $a$: $a_0=a$ if $a$ is an integral point, otherwise $a$ is a tangent vector {\it at} $a_0$. We assume that the datum $(a;\om; b)$ behaves nicely over $Z$ in an obvious sense, which requires several cases to state precisely: in all cases we assume the reduced divisor associated to $\om$, $a_0$, $b_0$, and $\infty$ is \'etale over $Z$; if $a$ is a tangent vector and $a_0 \neq b_0$, we assume the reduced divisor which supports $a+0+\infty$ on $\PP^1$ is \'etale over $Z$; if $a$, $b$ are both tangent vectors at the same point $a_0=b_0$, we assume similarly that the support of $a+b+0+\infty$ is \'etale over $Z$.
}  Being combinatorially unramified has the effect that the entire path bimodule $\Uu {_b P_a}$ (= bimodule over completed universal enveloping algebras at $a$ and $b$) is unramified over $Z$.

\ssegment{int13.6}{}%%%%%%%%%%%%%
Following Goncharov \cite{GonGal}, we define $I_a^b(\om)$ to be the composite
\[
U(Z) \xto{ o( {_bp_a^\dR} ) } {_bP_a(X)} 
\xto{\sim} U_a(X) 
\xto{f_\om} \AA^1_\QQ.
\] 
Here $o({_bp_a^\dR})$ denotes the orbit map associated to the rational point ${_bp_a^\dR}$. If $A(Z)$ denotes the graded hopf algebra $\Oo(U(Z))$ then $I_a^b(\om)$ belongs to $A(Z)_r$. We refer to these elements as \emph{(combinatorially unramified) unipotent iterated integrals}. Among the unipotent iterated integrals are the \emph{classical unipotent polylogarithms}:
\[
\Li_{n+1}^U(t):= I_{\oneato}^t(0^{,n},1)
\]
(where the comma is used as a typographical pun to denote the concatenation product) and their single valued cousins $\Li_{n}^{U,sv}(t)$, for which we refer the reader to Brown \cite{BrownSingle}. In terms of these objects, we may state the conjectures of Zagier and Goncharov as follows. 

\begin{conj}[Zagier's conjecture]%%%%%%%
\label{int14}
For each $n \ge 2$, the motivic Ext group
\[
E_n:= \Ext^1_K(\QQ(0), \QQ(n))
\]
is spanned by single valued unipotent $n$-logarithms $\Li_n^{U, sv}(t)$ with $t \in K$.
\end{conj}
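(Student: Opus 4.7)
The plan is to split the statement into two parts: (i) the elements $\Li_n^{U,sv}(t)$ for $t \in K$ actually belong to the extension subspace $E_n \subset A(K)_n$, and (ii) these elements span $E_n$. For part (i), I would work inside the motivic Hopf algebra $A(K)$ and use Goncharov's coproduct formula to show that $\Li_n^{U,sv}(t)$ is primitive modulo products, i.e.\ that its reduced coproduct vanishes, which places it in $E_n$. The key input is Brown's single-valued construction, which is designed precisely to kill the non-primitive contributions from iterated integrals in lower weights through a carefully chosen sum involving complex conjugation; this is the feature that distinguishes $\Li_n^{U,sv}$ from $\Li_n^U$, whose coproduct picks up lower-weight polylog terms.

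For part (ii), I would match dimensions. Borel's theorem gives $\dim_\QQ E_n = r_1 + r_2$ when $n \ge 3$ is odd and $\dim_\QQ E_n = r_2$ when $n \ge 2$ is even, so the goal reduces to producing a collection of arguments $t_1, \dots, t_d \in K$ (with $d$ equal to this dimension) whose single-valued $n$-logarithms are linearly independent in $E_n$. The canonical detector is the Borel regulator from $K_{2n-1}^{(n)}(K) \cong E_n$ to Deligne cohomology: concretely, one pairs $\Li_n^{U,sv}(t)$ against Borel's explicit cocycle and obtains a real-analytic map to $\RR^{r_1+r_2}$ (resp.\ $\RR^{r_2}$). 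The conjecture becomes equivalent to the image of this map being a full-rank lattice.

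The hard part is this last step: proving surjectivity of the Borel regulator restricted to the span of single-valued polylogarithms. For $n=2$ this is the classical Bloch--Wigner theorem; for $n=3$ it is a theorem of Goncharov via the motivic trilogarithm complex. For $n \ge 4$ the statement is genuinely open, and the main obstacle is the absence of a finite-dimensional cocycle model for $E_n$ analogous to Goncharov's polygon relations, so one cannot even present $E_n$ by generators and relations in terms of $\Li_n^{U,sv}$. I would therefore expect any serious attack on the general case to proceed first by constructing candidate ``higher polygon'' complexes whose cohomology in degree one maps to $E_n$, then by verifying exactness compatibly with the weight filtration on $K$-theory, and only then by inverting the regulator on the resulting quotient. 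Absent such a complex, even formulating the precise spanning relations among the $\Li_n^{U,sv}(t)$ is out of reach, which is why in the present paper the conjecture must be assumed rather than proved.
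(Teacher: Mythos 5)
There is nothing in the paper to compare your argument against: the statement you were given is Conjecture \ref{int14} (Zagier's conjecture). The paper does not prove it; it only recalls the known cases ($n=2$ by Zagier and independently Suslin and Bloch, $n=3$ by Goncharov, and all $n$ for cyclotomic fields by Beilinson) and then \emph{assumes} the conjecture as one of the hypotheses guaranteeing that the algorithms halt. Your closing remark --- that the general case is open and must be assumed here --- is therefore exactly the paper's position, and your part (ii) is a fair description of the standard regulator-based strategy and of where it is known to succeed.

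There is, however, a genuine error in part (i) of your plan. The single-valued construction does \emph{not} make $\Li_n^{U,sv}(t)$ primitive: the paper states explicitly that away from roots of unity most $\Li_n^{U,sv}(t)$ are \emph{not} contained in $E_n$, and since $E_n$ is by definition the kernel of the reduced coproduct in half-weight $n$, this says precisely that the reduced coproduct of $\Li_n^{U,sv}(t)$ does not vanish for general $t\in K$ (already in half-weight $2$ it vanishes only when the class of $(1-t)\wedge t$ dies in $\Lambda^2 K^*\otimes\QQ$). The conjecture must be read as saying that $E_n$ is contained in the span of the $\Li_n^{U,sv}(t)$, i.e.\ that suitable $\QQ$-linear combinations --- those annihilated by the reduced coproduct, equivalently satisfying functional-equation-type relations --- exhaust $E_n$; individual polylogarithms need not be extension classes. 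This distinction is not cosmetic: it is the reason the paper's basis algorithm cannot certify membership in $E_n$ and instead only verifies $\ep$-proximity to $E_n$ and then projects (see segment \ref{T2} and Proposition \ref{ABasis}). So if you retain this sketch as motivation, replace part (i) by the correct formulation; as a proof the proposal of course remains incomplete, for the reason you yourself identify.
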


\begin{rmk}
We recall that Zagier's conjecture is known for $n=2$ by Zagier \cite{Zagier?} and independently by work of Suslin \cite{Suslin?} and Bloch \cite{BlochDilogs}, for $n = 3$ by Goncharov, and for $K$ cyclotomic by Beilinson \cite{BeilinsonPolylog}. The main algorithm we construct below could be greatly simplified in the cyclotomic case $K=\QQ(\ze_N)$, where a basis for $E_n$ is given explicitly by the elements $\Li_n^{U,sv}(\ze_N^i)$ for $0 < i <N/2$. Conversely, away from the cyclotomic case, our algorithm is made complicated partly because of the lack of explicit constructions, even conjectural, of elements of $E_n$: away from the roots of unity, most $\Li_n^{U, sv}(t)$'s are {\it not} contained in $E_n$.
\end{rmk}

\begin{conj}[Goncharov-exhaustion]%%%%%%%
\label{int15}
For any open integer scheme $Z$ and any $n\in \NN$, there is an open subscheme $\Zo \subset Z$ such that $A(\Zo)_{\le n}$ is spanned by combinatorially unramified unipotent iterated integrals over $\Zo$.
\end{conj}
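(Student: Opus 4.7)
The plan is to reduce the claim to a spanning statement for the graded pieces $\Ext^1_\Zo(\QQ(0),\QQ(k))$, and then invoke polylogarithms. Being the coordinate ring of a free prounipotent group, $A(Z)$ is a shuffle Hopf algebra on the graded dual of $U(Z)^{\ab} = \bigoplus_{k \ge 1} \Ext^1_Z(\QQ(0),\QQ(k))^\lor$; in particular every element of $A(\Zo)_{\le n}$ is a polynomial in classes from $\Ext^1_\Zo(\QQ(0),\QQ(k))$ for $k \le n$. Since products in $A(\Zo)$ of iterated integrals with matching endpoints are $\QQ$-linear combinations of iterated integrals with those same endpoints (the classical shuffle relation applied to the sequences of forms), it suffices to produce, for each $k \le n$, combinatorially unramified unipotent iterated integrals whose classes span $\Ext^1_\Zo(\QQ(0),\QQ(k))$.

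For $k=1$: $\Ext^1_\Zo(\QQ(0),\QQ(1)) = \Oo^*(\Zo) \otimes \QQ$ is spanned by logarithm classes $\log u = I^u_{\tbp}(\om_0)$ for $u \in \Oo^*(\Zo)$. Fixing a finite generating set $u_1,\dots,u_r$, each integral is automatically combinatorially unramified, since its associated reduced divisor $\{0, u_i, \infty\}$ is \'etale over $\Zo$ whenever the $u_i$ are units. For $k \ge 2$: appeal to Zagier's conjecture (Conjecture \ref{int14}) to obtain a finite list of single-valued unipotent $k$-logarithms $\Li^{U,sv}_k(t_j)$ for $t_j \in \Zo$ whose classes span $\Ext^1_\Zo(\QQ(0),\QQ(k))$, then replace $\Zo$ by the open subscheme on which each divisor $\{0, 1, t_j, \infty\}$ remains \'etale. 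Only finitely many arguments occur across $k \le n$, so a single shrinking suffices for the whole range.

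The main obstacle is the case $k \ge 2$: beyond Zagier's conjecture there is no general tool for forcing iterated integrals to exhaust the motivic Ext groups, and Zagier itself is known only in low weight and (unconditionally) for cyclotomic fields by Beilinson. Once the spanning of the Ext groups is granted, the shuffle-product reduction and the elementary shrinking argument finish the proof. This is presumably why Conjecture \ref{int15} is stated as a conjecture rather than a theorem: at its core it is essentially equivalent to Zagier, dressed up with a mild unramifiedness argument.
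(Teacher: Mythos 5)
The statement you are trying to prove is stated in the paper as a \emph{conjecture} (it is Goncharov's conjecture from \cite{GonICM}, strengthened to include weak control over ramification); the paper offers no proof, and your argument does not supply one. The decisive gap is your opening reduction: it is false that every element of $A(\Zo)_{\le n}$ is a polynomial in extension classes from $\Ext^1_\Zo(\QQ(0),\QQ(k))$, $k\le n$. Since $U(\Zo)$ is \emph{free} prounipotent on generators dual to $\bigoplus_k \Ext^1_\Zo(\QQ(0),\QQ(k))$, its coordinate ring is the graded dual of the completed tensor algebra, i.e.\ a shuffle algebra: as a polynomial algebra it is freely generated by (duals of) a basis of the whole free Lie algebra $\nN(\Zo)$ --- in degree $n$ this includes classes dual to brackets of lower-degree generators, not just the one-letter part. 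Concretely, the indecomposables $(I/I^2)_n$ are dual to $\nN(\Zo)_{-n}$, which is strictly larger than $\nN(\Zo)^{\ab}_{-n}\cong E_n^{\lor}$ as soon as there are nontrivial brackets in that degree. This is exactly why the paper's Proposition \ref{0} requires, besides a basis $\Ee_i$ of the extension space $E_i$, a set $\Pp_i$ spanning a complement $P_i$ of $E_i+D_i$ (the ``primitive non-extensions''): if $A(\Zo)$ were generated by the $E_i$ alone, the spaces $P_i$ would vanish, which they do not.

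Consequently the conjecture is not ``Zagier dressed up with an unramifiedness argument.'' Zagier's conjecture (\ref{int14}) concerns only the Ext groups being spanned by single-valued polylogarithms; Goncharov exhaustion asserts that the \emph{entire} Hopf algebra $A(\Zo)_{\le n}$ --- including the parts $P_{n'}$ invisible to $\Ext^1$ --- is spanned by (generally higher-depth) combinatorially unramified iterated integrals, and the paper treats the two as independent hypotheses (both are assumed, separately, in Theorem \ref{MainTh}(2) and Proposition \ref{ABasis}(2)). Even setting this aside, your argument is conditional on Zagier's conjecture, which is itself open outside low weight and the cyclotomic case, so at best it would yield a conditional implication, not a proof. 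The pieces of your write-up that are sound --- the weight-one case via unit logarithms, the shuffle-product closure of iterated integrals with fixed endpoints, and the shrinking of $\Zo$ to make finitely many divisors \'etale --- do not touch the real content of the statement.
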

\noindent

\begin{rmk}
Goncharov's conjecture \cite{GonICM} implies that for every $Z$ and $n$ there is an open subscheme $\Zo \subset Z$ such that $A(Z)_{\le n}$ (in place of $A(\Zo)_{\le n}$) is spanned by \emph{linear combinations of} combinatorially unramified unipotent iterated integrals over $\Zo$. This distinction between actual iterated integrals and linear combinations of iterated integrals is important, and our strengthening of the conjecture is, as far as I can tell, non-trivial and necessary for our purposes. The reason is that we are unable to check algorithmically if a given linear combination of combinatorially unramified iterated integrals in $A(\Zo)$ belongs to the subalgebra
\[
A(Z) \subset A(\Zo).
\]

Actually, our work here does provide an algorithm for checking if a given linear combination is $p$-adically close to $A(Z)$, but this algorithm is rather indirect. In outline, we first apply our \textit{basis algorithm} to obtain an open subscheme $\Zo \subset Z$ and a \textit{concrete} basis of $A(\Zo)$ (within the specified weight range) consisting of (actual!) combinatorially unramified unipotent iterated integrals compatible to within $\ep$ with the extension spaces. This gives rise to a set of generators of the Lie algebra, which in turn generate an \textit{abstract} shuffle basis of $A(\Zo)$. We then compare the two bases to within $\ep$ using our \textit{change of basis algorithm} and use the shuffle basis to identify $A(Z)$ inside $A(\Zo)$.

Thus, our conjecture represents a version of Goncharov's conjecture strengthened somewhat to include weak control over ramification. Better control over ramification would yield a faster algorithm. The case $Z = \Spec \ZZ \setminus \{2\}$, $n=\infty$ established by Deligne \cite{DelMuN}, and the discussion of the case $n=2$ in \cite{mtmue} both support the belief that unipotent iterated integrals should be compatible with ramification, at least to the extent predicted by our wording of the conjecture.

\end{rmk}

\ssegment{int16}{}%%%%%%%
Unipotent iterated integrals have a filtered $\phi$ variant at each prime $\pP \in Z$. We mention only a few key similarities and differences, referring the reader to \cite[\S4]{mtmue} for details. As for mixed Tate motives, there is a Tannakian (in fact, mixed Tate) category of mixed Tate filtered $\phi$ modules, and an associated proalgebraic group of the form
\[
G(\Op) = \Gm \ltimes U(\Op)
\]
with $U(\Op)$ free prounipotent, but now over $\Qp$; we adopt our notation ($\nN(\Op)$, $\Uu(\Op)$, $A(\Op)$) from the motivic case.\footnote{Our use of $\Op$ (in place of $K_\pP$) in the notation expresses the fact that we're working with filtered $\phi$-modules as opposed to filtered $\phi, N$-modules.} Unlike the motivic case, $U(\Op)$ possesses canonical generators $v_{\pP,-1}, v_{\pP,-2}, v_{\pP,-3}, \dots$, and an associated special $K_\pP$-valued point
\[
u_\pP = \exp \sum_i v_{\pP,i}
\]
of $U(\Op)$.
 Note that the ``generators'' are not points of the group, but rather elements of the Lie algebra, regarded as Lie-like elements of the completed universal enveloping algebra --- see the discussion of free prounipotent groups in \S2 of \cite{mtmue}.

\ssegment{int17}{}%%%%%% 
There is a morphism of unipotent groups 
\[
U(Z) \from U(Z_\pP)
\]
(linear over  $\Spec \QQ \from \Spec \Qp$) induced by filtered $\phi$ realization. The composite 
\[
U(Z) \from U(Z_\pP) \xfrom{u_\pP} \Spec K_\pP
\]
is the map denoted $I_\m{BC}$ above; we refer to it as ``Besser-Coleman integration''. The associated map of rings
\[
\per_\pP := I_\m{BC}^\sharp: A(\Op)\to K_\pP
\] 
is called the \emph{p-adic period map}. If $\pi$ denotes the embedding of $K$ in $K_\pP$, then we have
\[
I_a^b(\om)(I_\m{BC}) =  \per_\pP(I_a^b(\om)) = \int_{a^\pi}^{b^\pi}\om^\pi,
\]
a $p$-adic iterated integral in the sense of Coleman-Besser. (From this point of view, it's better to think of $I_a^b(\om)$ as a ``motivic iterated \emph{integrand}'': when we combine a {\it motivic iterated integrand} with {\it $p$-adic integration}, we obtain a {\it $p$-adic iterated integral}.) An algorithm for computing such integrals to arbitrary $p$-adic precision is constructed in Dan-Cohen--Chatzistamatiou \cite{PItInts}; as mentioned above, we review this unpublished work in section \ref{Num} below. The following conjecture is stated for instance in Yamashita \cite{YamashitaBounds}.

\begin{conj}[$p$-Adic period conjecture]%%%%%%
\label{int18}
Let $Z$ be an open integer scheme with fraction field $K$, and let $\pP$ be a closed point of $Z$. Then the $p$-adic period map
\[
\per_\pP: A(Z) \to K_\pP
\]
is injective.
\end{conj}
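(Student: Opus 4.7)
The plan is to reformulate the injectivity of $\per_\pP$ as a density statement about a canonical $K_\pP$-point and then to attack it Tannakially. First I would note that $\per_\pP$ is the comorphism of the $K_\pP$-valued point
\[
I_{BC} : \Spec K_\pP \to U(Z)_{K_\pP},
\]
so injectivity of $\per_\pP$ is equivalent to $I_{BC}$ being schematically dominant. Since by \S\ref{int17} the map $I_{BC}$ factors as
\[
\Spec K_\pP \xto{u_\pP} U(\Op) \to U(Z)_{\Qp},
\]
the problem splits into (i) schematic dominance of the filtered $\phi$ realization morphism $U(\Op) \to U(Z)_{\Qp}$, and (ii) density of $u_\pP$ inside $U(\Op)$.

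For (ii), the explicit formula $u_\pP = \exp\sum_i v_{\pP,i}$ in terms of the canonical free generators of $\nN(\Op)$ recalled in \S\ref{int16} shows that, up to a harmless change of coordinates, $u_\pP$ is a generic closed point of an infinite-dimensional pro-affine space, so no nonzero polynomial in the $v_{\pP,i}$ can vanish on it. This gives (ii) fairly cheaply.

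The real obstacle is (i), which asserts that the filtered $\phi$ realization functor is fully faithful on the Hopf algebras of the pro-unipotent radicals. A natural line of attack is weight by weight: on associated graded pieces, surjectivity reduces to injectivity of a regulator
\[
\Ext^1_{\MT(Z)}(\QQ(0),\QQ(n)) \to \Ext^1_{F\phi}(\QQ(0),\QQ(n)),
\]
which one attempts to establish by matching dimensions (via Borel's $K$-theoretic computation recalled in \S\ref{int11} together with its filtered $\phi$ counterpart) and then by a direct verification on generators. In the case $Z = \Spec \ZZ$, Brown's theorem on motivic multiple zeta values combined with Furusho's work on $p$-adic MZVs reduces the question to the conjectural linear independence of $p$-adic multiple zeta values over $\Qp$; in the cyclotomic case one may instead use Beilinson's description of $\Ext^1$ via polylogarithms and land on a $p$-adic linear independence statement for values of $\Li_n^U$ at roots of unity.

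The hard part is precisely this $p$-adic transcendence input, which at present is essentially unavailable in any nontrivial case. I therefore expect no unconditional proof in the foreseeable future; the conjecture is best viewed as a $p$-adic counterpart of Grothendieck's period conjecture, and within the present paper it is invoked only as a halting hypothesis, never in the construction of the algorithm itself.
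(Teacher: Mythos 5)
This statement is a conjecture in the paper, not a theorem: the paper gives no proof of Conjecture \ref{int18} (it is quoted from Yamashita's work), and it enters only as a hypothesis --- in part (2) of Theorem \ref{MainTh} and, via the flatness of $A(\Zo) \to \Qp$, in the argument of segment \ref{10a}. You correctly recognize this, so your final assessment agrees with the paper. However, since you offer a strategy sketch, I should point out that the proposed factorization is not viable, and the errors are concrete rather than merely a matter of missing transcendence input.

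First, injectivity of $\per_\pP$ on the $\QQ$-algebra $A(Z)$ is equivalent to schematic dominance of $\Spec K_\pP \to U(Z)$ as a morphism to the $\QQ$-scheme $U(Z)$; it is \emph{not} dominance of a $K_\pP$-point of $U(Z)_{K_\pP}$, which is impossible for a positive-dimensional group. Second, your step (ii) is false: $u_\pP$ is the opposite of a generic point. By construction $\log u_\pP = \sum_i v_{\pP,i}$, so in the standard coordinates of segment \ref{int16} the coordinates of $u_\pP$ are canonical (rational) numbers, and evaluation at $u_\pP$, being evaluation at a single rational point, has as kernel the entire maximal ideal of that point in $A(\Op)$. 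The $p$-adic transcendence asserted by the conjecture does not reside in the position of $u_\pP$ inside $U(\Op)$ but in the realization map $A(Z) \to A(\Op)$, whose matrix entries in the standard basis are exactly the $p$-adic periods (this is what the realization algorithm of segment \ref{4} approximates). Third, step (i) is also false in general for dimension reasons: $A(\Op)_1$ is one-dimensional over $\Qp$ (dual to the single generator $v_{\pP,-1}$), whereas $A(Z)_1 \otimes \Qp$ has dimension $\operatorname{rank}\, \Oo_Z^*$, so $U(\Op) \to U(Z)_{\Qp}$ cannot be schematically dominant once $Z$ has more than one independent unit; similarly in higher half-weights whenever the motivic $\Ext^1$ exceeds dimension one. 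The conjecture only claims $\QQ$-linear injectivity of the composite $A(Z) \to A(\Op) \to K_\pP$, and this genuinely does not decouple into injectivity of the two factors: the $\QQ$-structure and the specific point must be played against each other (in half-weight one this is precisely $\QQ$-linear independence of $p$-adic logarithms of a basis of $\Oo_Z^*$, in the spirit of Brumer's theorem, which is the only piece of your reduction that survives). So the correct conclusion --- that this is an open, Grothendieck-period-style conjecture used only as a halting/correctness hypothesis --- stands, but not by the route you propose.
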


\ssegment{int19}{Hasse principle for finite cohomology}%%%%
In addition to the semilinear injectivity of the period conjecture, we will also need a linear injectivity property which concerns the product of realization maps
\[
\Re_p: \Qp \otimes \Ext^1_{\Oo_K}(\QQ(0), \QQ(n))
\to \prod_{\pP | p} \Ext^1_{\Op}(\Qp(0), \Qp(n))
\]
for $n \ge 2$. We recast this in the language of finite Galois cohomology as follows. Let $S$ be the set of places of $K$ above $p$ and $\infty$. Let $G_S$ denote the Galois group of the maximal extension of $K$ which is unramified outside of $S$, and for $v$ a place of $K$, let $G_v$ denote the total Galois group of the local field $K_v$. Following Bloch--Kato \cite{BlochKato}, we write  $H^i_{f}$ for the space of cohomology classes that are crystalline at all primes above $p$. By the $p$-adic regulator isomorphisms of Soul\'e \cite{SouleRegulators}:
\[
\Qp \otimes \Ext^1_{\Oo_K[1/p]}(\QQ(0), \QQ(n))
\xto{\sim} 
H^1(G_S, \Qp(n)),
\]
the injectivity of $\Re_p$ is equivalent to the following condition on a number field $K$, a prime $p$ of $\ZZ$ and an integer $n \ge 2$:

\Condition{hassest}{
The map
\[
\m{loc}_p : H^1_{ f}(G_S, \Qp(n)) \to 
\prod_{\pP | p} H^1_f (G_\pP, \Qp(n))
\]
is injective.}

Let $Z$ be a totally real open integer scheme with function field $K$ and assume the corresponding polylogarithmic Chabauty-Kim loci converge at $n$. We say that $Z$ \emph{obeys Kim vs. Hasse} if the above injectivity holds at levels $n' \le n$.

In fact, an anonymous referee has pointed out that \textit{condition \ref{hassest}} follows from a conjecture due to Jannsen. Conjecture 1 of \cite{Jannsen} (applied, in the notation of that article, to $X = \Spec \Oo_K [1/p]$) says that 
\[
H^2 (G_S, \Qp(n)) = 0 
\]
for $n < 0 $. By the Poitou-Tate exact sequence 
\[
\cdots \to
H^2 (G_S, M^\lor(1))^\lor \to
H^1(G_S, M) \to
\bigoplus_{v \in S} H^1(G_v, M) \to 
\cdots
\]
applied to $M = \Qp(n)$, we find that the map
\[
H^1(G_S, \Qp(n)) \to 
\bigoplus_{v \in S} H^1(G_v, \Qp(n))
\]
is injective whenever $n \ge 2$. 

For $v$ real and $p$ odd, we have
\[
H^1(G_v, \Qp(n)) = 0. 
\]
Indeed, if $C$ is a finite cyclic group with generator $\si$, and if we consider the elements $1-\si$, $N:= \sum_{\tau\in C} \tau$ of the group algebra $\ZZ[C]$, then for any $\ZZ[C]$-module $A$, the sequence
\[
0 \to A \xrightarrow{\si-1} A \xrightarrow{N} A \xrightarrow{\si-1} A \xrightarrow{N} \cdots \;,
\]
in which the first $A$ is in degree zero, forms a complex $A^\bullet$ and
\[
\m H^i (C,A)= \m H^iA^\bullet \;.
\]
When $C$ has order $2$, we have $N = \si +1$.  Applying this to our situation, we have 
\[
H^1(G_v, \mu_{p^r}^{\otimes n}) = 
H^1 \left( \ZZ/(2), (\ZZ/(p^r))^{\otimes n} \right)
\]
computed by the complex 
\[
0 \to \ZZ/(p^r) \xto{(-1)^n-1} \ZZ/(p^r) \xto{(-1)^n+1} 
\ZZ/(p^r) \to \cdots
\]
in which isomorphisms alternate with multiplication by $\pm 2$ depending on the parity of $n$. Either way, all cohomologies above degree $0$ vanish. 

Consequently, the map
\[
H^1(G_S, \Qp(n)) \to 
\bigoplus_{\pP | p} H^1(G_\pP, \Qp(n))
\]
is injective. Condition \ref{hassest} follows by restricting this map to finite cohomology spaces.

As a final remark, let us note that this injectivity is related to the non-vanishing of certain $p$-aidc $L$-values; c.f. theorem 4.2.1 of Perrin-Riou \cite{PRKubLeop}.

\segment{int20}{Outline of algorithm}%%%%%%%%

\ssegment{int20.1}{}%%%%%%%
Our main construction is an algorithm, which we denote by $\Aa_\m{Loci}$, which takes as input an open integer scheme $Z$, a prime $p$ of $\ZZ$ over which $Z$ is totally split, a natural number $n$, and an $\ep$, and returns an open subscheme $\Zo$ of $Z$, an algebra basis $\widetilde \Bb$ of the polynomial ring $A(\Zo)_{\le n}$, and a family $\{\Ftil_i\}_i$ of elements of the polynomial ring 
\[
\QQ[\widetilde \Bb, \log, \Li_1, \dots, \Li_n].
\]

\ssegment{bcb1}{}%%%%%
In terms of the category of mixed Tate motives $\MT(Z)$ and its Tannakian fundamental group $G(Z)$ discussed in  \S\ref{int11}, the polylogarithmic Selmer variety of \S\ref{int6} is characterized by the functor of $\QQ$-algebras
\[
R \mapsto H^1 \big( G(Z)_R, U^\PL_{\ge -n, R} \big).
\]
The proof of Proposition 2 of Kim \cite{kimi} applies mutatis mutandis to show that this functor is representable by a finite-type affine $\QQ$-scheme, which in this case is in fact isomorphic to affine space. 

\ssegment{int21}{}%%%%%%
According to \cite[\S5.2]{mtmue}, we have 
\begin{align*}
H^1(G(Z), U(X)^\PL_{\ge -n}) &=
Z^1(U(Z), U(X)^\PL_{\ge -n})^\Gm \\
	&=\Hom(U(Z), U(X)^\PL_{\ge -n})^\Gm,
\end{align*}
the space of $\Gm$-equivariant homomorphisms, and similarly for the filtered $\phi$ version over $Z_\pP$. Moreover, in the latter case, evaluation at $u_\pP$ induces an isomorphism
\[
\ev_{u_\pP}: \Hom(U(Z_\pP), U(X_\pP)^\PL_{\ge -n})^\Gm \xto{\sim}
U(X_\pP)^\PL_{\ge -n} = \Qp \otimes U(X)^\PL_{\ge -n}.
\]
The composit map
\begin{align*}
\Qp \otimes \Hom(U(Z), U(X)^\PL_{\ge -n})^\Gm
 \to
 \Hom(U(Z_\pP), &U(X_\pP)^\PL_{\ge -n})^\Gm 
 \\
 & \xto{\sim}
\Qp \otimes U(X)^\PL_{\ge -n}
\end{align*}
is given by evaluation at the pullback $I_\m{BC}$ of $u_\pP$ to $U(Z)$. As explained in the introduction, in order to compute its scheme-theoretic image, we first put this evaluation map inside the universal family of evaluation maps $\ev = \ev_\m{Everywhere}: $
\[
\Hom^\Gm(U(Z), U(X)^\PL_{\ge -n}) \times
U(\Zo) \to U(X)^\PL_{\ge -n} \times U(\Zo)
\]
pulled back along
\[
U(\Zo) \surj U(Z).
\]
If we fix arbitrary generators of $U(\Zo)$, these give rise to coordinates on $A(\Zo)$, which we refer to as {\it abstract shuffle-coordinates}. In terms of these, the computation is purely classical. We must then however switch to coordinates whose image under the period map can be computed, that is, to \emph{concrete coordinates} given by unipotent iterated integrals. As explained in the introduction, the heart of our algorithm constructs such coordinates, as well as an approximate change-of-basis matrix which relates a judicious choice of abstract shuffle-coordinates to our concrete coordinates. This key step is inspired by the work of Francis Brown in \cite{BrownDecomp}.

\segment{main}{Statement of main theorem}%%%%%%%
For each prime $\pP$ lying above $p$, the $\pP$-adic period map extends in an obvious way to a map
\[
\QQ[\widetilde \Bb, \log, \Li_1, \dots, \Li_n] \to \m{Col}(X(\Op))
\]
to the ring of Coleman functions; denote the image of the element $\Ftil_i$ from segment \ref{int20.1} by $\Ftil_i^\pP$.

\begin{thm} \label{MainTh}
Let $Z$ be an open integer scheme, $\pP \in Z$ a totally split prime, $p$ the image of $\pP$ in $\Spec \ZZ$, $n$ a natural number, and $\ep \in p^\ZZ$. Let
\[
\Kk_\pP(\nN^\PL_{\ge -n}) \lhd \m{Col}(X(Z_\pP))
\]
denote the ideal which defines the Chabauty-Kim locus $X(Z_\pP)_n$; we refer to $\Kk_\pP(\nN^\PL_{\ge -n})$ as the \emph{$p$-adic Chabauty-Kim ideal associated to $\nN^\PL_{\ge -n}$}.
\begin{enumerate}
\item
Suppose $\Aa_\m{Loci}(Z,p,n,\ep)$ halts. Then there are functions $\{ F^\pP_i\}$ generating the $\pP$-adic Chabauty-Kim ideal $\Kk_\pP(\nN^\PL_{\ge -n})$ associated to $\nN^\PL_{\ge -n}$, such that 
\[
\left| \widetilde F_i^\pP - F_i^\pP \right| < \ep
\] 
for all $i$.
\item
Suppose Zagier's conjecture (conjecture \ref{int14}) holds for $K$ and $n' \le n$. Suppose Goncharov exhaustion (conjecture \ref{int15}) holds for $Z$ and $n' \le n$. Suppose the period conjecture holds for the open subscheme $\Zo \subset Z$ constructed in segment \ref{6} in half-weights $n' \le n$. Suppose $K$ obeys the Hasse principle for finite cohomology (condition \ref{hassest}) in half-weights $2 \le n' \le n$. Then the computation $\Aa_\m{Loci}(Z,p, n, \ep)$ halts.
\end{enumerate}
\end{thm}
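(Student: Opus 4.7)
The plan is to treat parts (1) and (2) separately: part (1) is a correctness-up-to-tolerance statement, which follows from a careful chase of the central commutative diagram displayed in segment \ref{int20} together with an accounting of accumulated error, while part (2) is a halting statement, which reduces to matching each potentially nonterminating search in $\Aa_\m{Loci}$ with the conjecture or hypothesis that terminates it.

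For part (1), I would first verify the error-free version. The elements $F_i^\m{abs}$ produced formally in segment \ref{9a} cut out $\Im \ev_\m{Everywhere}$ inside $U(\Zo) \times U^\PL_{\ge -n}$ by construction, since that step is purely formal Hopf-algebraic manipulation in the free generators $\Si^o$ of $\nN(\Zo)$. Pulling back along $I_\m{BC} \times \m{Id}$ and then chasing the diagram gives, via the identification of $\Im \reg_\pP$ as the pullback of $\Im \ev_\m{Everywhere}$ under $I_\m{BC}$, functions cutting out $X(\Op)_n = \ka_\pP\inv(\Im \reg_\pP)$, that is, generators of $\Kk_\pP(\nN^\PL_{\ge -n})$. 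Then I would bound the error. The output $\widetilde F_i^\pP$ differs from a genuine generator $F_i^\pP$ because of three independent approximations: (a) the basis $\widetilde \Aa$ is only $\ep$-close to the perfect abstract basis $\Aa$, since its subset $\widetilde \Ee$ consists only of near-extensions; (b) the change-of-basis matrix relating $\widetilde \Ff$ to a genuine shuffle basis $\Ff$ computed in segment \ref{8a} is itself only approximate; and (c) the realization algorithm of segment \ref{4} computes $\pP$-adic periods to finite precision. Since the whole construction is polynomial of bounded degree in bounded-norm data, each source of error propagates to an admissible change in $\ep$ in the final output; assembling the bounds gives the claim.

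For part (2), I would trace through $\Aa_\m{Loci}$ subroutine by subroutine. Goncharov exhaustion yields an open $\Zo \subset Z$ with $A(\Zo)_{\le n}$ spanned by combinatorially unramified unipotent iterated integrals, so the search of segment \ref{6} halts after finitely many candidates. Zagier's conjecture, applied in each half-weight $n' \le n$, ensures that $\widetilde \Ee_{n'}$ can actually be found among the explicit single-valued polylogarithms $\Li_{n'}^{U,sv}(t)$, so the search for near-extensions terminates. The $p$-adic period conjecture on $\Zo$ makes $\per_\pP$ injective on $A(\Zo)_{\le n}$, which is what allows the realization algorithm to certify linear (in)dependence after finitely many digits of $p$-adic precision. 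The Hasse principle for finite cohomology makes $\m{loc}_p$ injective on the relevant extension groups in half-weights $2 \le n' \le n$, which is precisely what is needed in segment \ref{T3} to certify that $\m{Re}_p \widetilde \Ee_{n'}$ is linearly independent inside $\prod_{\pP|p} A(\Op)$; this linear independence in turn guarantees that the system of linear equations set up in segment \ref{T5} has a unique solution computable in finite time.

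The main obstacle will be the error accounting in part (1). Three distinct kinds of $\ep$-approximation --- the near-extensions in $\widetilde \Ee$, the approximate change-of-basis between $\widetilde \Ff$ and $\widetilde \Aa$, and the truncated realization of motivic iterated integrals --- are fed through a nonlinear evaluation map whose source involves a rapidly growing free pronilpotent Lie algebra. One has to check by induction on half-weight that the errors propagate in a controlled way through each layer (realization, change of basis, evaluation at $I_\m{BC}$) and that all constants arising are themselves algorithmically computable. This is where the admissible-change-in-$\ep$ convention of segment \ref{int20} does the heavy lifting; the nontrivial point is that no essential quantitative information is lost in these manipulations, and that the three error sources do not reinforce one another in a way that destroys the bound.
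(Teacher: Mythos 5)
The central step of your part (1) is asserted rather than proved: you write that pulling back along $I_\m{BC} \times \mathrm{Id}$ ``gives, via the identification of $\Im \reg_\pP$ as the pullback of $\Im \ev_\m{Everywhere}$ under $I_\m{BC}$,'' generators of the Chabauty--Kim ideal. But that identification is precisely the mathematical content of the theorem, and it is not automatic: scheme-theoretic image does not commute with arbitrary base change, so a priori $(I_\m{BC}\times \mathrm{Id})\inv(\Im \ev_\m{Everywhere})$ could be strictly larger than the locus cut out by $\Im \reg_\pP$. The paper's proof is devoted to exactly this point: it factors the comparison into a tower of Cartesian squares ending with evaluation at $u_\pP$, and checks square by square that formation of the scheme-theoretic image is compatible with pullback. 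For the top square one uses that a $\Gm$-equivariant homomorphism $\nN(Z) \to \nN(X)^\PL_{\ge -n}$ applied to $\overline\nu$ depends only on the image of $\nu$ in $\nN(\Zo)_{\ge -n}$, whence $\Im \ev_n = (\Im \overline{\ev}_n) \times \nN(\Zo)_{<-n}$; for the middle square one needs the map $A(\Zo) \to \Qp$ determined by $I_\m{BC}$ to be \emph{flat}, which the paper deduces from the $p$-adic period conjecture together with the observation that an injection from an integral domain into a field factors as a localization followed by a field extension; for the bottom square the vertical maps are isomorphisms. None of this appears in your proposal, and without it your diagram chase begs the question.

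Relatedly, you assign the period conjecture the wrong role. In the paper it is not what lets the realization algorithm certify linear independence: halting of the basis algorithm (Proposition \ref{ABasis}(2)) uses only Zagier's conjecture, Goncharov exhaustion, and the Hasse principle, the last giving injectivity of $\Real_p$ near the extension spaces and hence success of the $\ep$-linear-independence test of segment \ref{T3} once $\ep'$ is small; the period conjecture enters solely through the flatness argument above. Your matching of the remaining conjectures to the searches in part (2) agrees with the paper, and your error-propagation remarks are consistent with how the paper delegates them to Propositions \ref{ABasis} and \ref{8d} and the admissible-change-in-$\ep$ convention; but the missing image-versus-pullback argument is a genuine gap.
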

\noindent
We remark that part (1) of the theorem is independent of the choice of norm on the space of polylogarithmic functions up to an admissible change in $\ep$. We complete the construction of the Loci algorithm and prove Theorem \ref{MainTh} in Section 4.

%%%%%%%%%%%%%%%%%%%%%%%%%%%%%%%
\section{Construction of arithmetic algorithms}%%%%
%%%%%%%%%%%%%%%%%%%%%%%%%%%%%%%

\segment{-2}{Generators for graded free algebras}%%%%%%%%

\Proposition{-1}%%%%%%%%%
{
Let $S = \bigcup_{i=1}^\infty S_i$ be a disjoint union of finite sets, and similarly 
\[
S' = \bigcup_{i=1}^\infty S'_i.
\] 
Let $k$ be a field and $k[S], k[S']$ associated graded free algebras and $I,I'$ the augmentation ideals. Let 
\[
\phi: k[S'] \to k[S]
\]
be a homomorphism which preserves the grading. Suppose the induced map
\[
I'/I'^2 \to I/I^2
\]
is iso. Then $\phi$ is iso.
}

\begin{proof}
For $n\ge 1$, we have $I_n = k[S]_n$. Surjectivity follows by induction using the short exact sequences
\[
0 \to (I^2)_n \to k[S]_n \to (I/I^2)_n \to 0.
\] 

Since $S_i$ maps to a basis of $(I/I^2)_i$, the bijection
\[
(I'/I'^2)_i \to (I/I^2)_i
\]
gives us a bijection between $S'_i$ and $S_i$. For any $n$, $\phi$ maps $S'_{\le n}$ into $k[S]_{\le n}$, so $\phi$ restricts to a map
\[
k[S'_{\le n}] \to k[S_{\le n}]
\]
of subalgebras generated in degrees $\le n$. These are surjective maps of polynomial algebras of same finite Krull dimension. This means that 
\[
\Spec  k[S_{\le n}] \to \Spec k[S'_{\le n}]
\]
is a closed immersion between affine spaces of same dimension, hence an isomorphism by the Hauptidealsatz.
\end{proof}

\segment{-3}{Generators for mixed Tate groups}%%%%%%%

\ssegment{mtg}{}%%%%%%%
For a review of free prounipotent groups, we refer the reader to \S2 of Dan-Cohen--Wewers \cite{mtmue}. By a \emph{mixed Tate group} over a field $k$ of characteristic zero, we mean a free prounipotent group $U$ equipped with a grading of the Lie algebra
\[
\nN = \Lie U
\] 
such that $\nN_i = 0$ for $i \ge 0$. The Lie algebra $\nN$ admits a set of homogeneous free generators, and we define a \emph{set of homogeneous free generators of $U$} to be a set of homogeneous free generators of $\nN$. The grading on $\nN$ induces also a grading of the completed universal enveloping algebra $\Uu = \Uu \nN$ such that $\Uu_0 = k$ and $\Uu_i = 0$ for $i >0$, as well as a grading on the coordinate ring $A = \Oo(U) = \Uu^\lor$ such that $A_0 = k$ and $A_i = 0$ for $i<0$. We refer to the graded degree of an element (of $\nN$, $\Uu$, $A$) as its \emph{half-weight}.

The kernel of the comultiplication
\[
E_n < A_n
\]
is the space of \emph{extensions}. Indeed, by the general theory of mixed Tate categories we have exact sequences
\[
0 \to \Ext^1_{\Rep (\Gm \ltimes U)} \big(k(0), k(n) \big)
\to
A_n \to 
\underset{i,j \ge 1}{\bigoplus_{i+j=n}} A_i \otimes A_j
\]
where $k(i)$ denotes the trivial $U$-representation in half-weight $-i$. Similarly, the multiplication gives rise to a subspace
\[
A_n > D_n,
\]
namely the image of the map
\[
A_n \from 
\underset{i,j \ge 1}{\bigoplus_{i+j=n}} A_i \otimes A_j;
\]
we refer to $D_n$ as the \emph{space of decomposable elements}.

\Proposition{0}%%%%%%%%%%
{
Let $U$ be a mixed Tate group, and let $A$ denote its coordinate ring. For each $i$ let $E_i$ denote the space of extensions in $A_i$, $D_i$ the space of decomposable elements. Let $\Pp_i$ be a linearly independent subset of $A_i$ which spans a subspace $P_i$ complementary to $E_i + D_i$.  Let $\Ee_i$ be a basis for $E_i$ and let $ \Ee = \bigcup \Ee_i$, $\Pp = \bigcup \Pp_i$. Then as a ring,
\[
A = k[\Ee \cup \Pp].
\]
}

\begin{proof}
The subspaces $E_i$ and $D_i$ are disjoint. To see this, fix an arbitrary set
\[
\ep' = \bigcup_{i  = 1}^\infty \ep'_{-i}
\]
of homogeneous free generators for $U$, and for $w$ a word in $\ep'$, let $f_w \in A$ denote the associated function. Then $E_i$ has basis
\[
\set{f_a}{a \in \ep'_{-i}}
\]
dual to the set of one-letter words of half-weight $-i$, while $D_i$ is spanned by shuffle products of functions $f_w$ with $w$ a word in $\ep'_{>-i}$, so is contained in the space with basis
\[
\set{f_w}{w \in \Words_{-i}(\ep'_{>-i})}.
\]
It follows that $A_i$ decomposes as a direct sum
\[
A_i = E_i \oplus P_i \oplus D_i
\tag{\ref{0}$*$}
\]
and that $\Ee_i \cup \Pp_i$ maps to a basis of $(I/I^2)_i$. Hence, by Proposition \ref{-1}, $\Ee \cup \Pp$ forms a set of free $k$-algebra generators  for $A$. 
\end{proof}

\Proposition{.5}%%%%%%%%%%
{
In the situation and the notation of Proposition \ref{0}, let
\[
\ep_{-i} \subset \Uu_{-i}
\]
be the set of elements dual to the elements of $\Ee_i$ relative to the decomposition (\ref{0}$*$). Then
\[
\ep := \bigcup_{i=1}^\infty \ep_{-i}
\]
forms a set of free generators for $U$.
} 

\begin{proof}
We claim that every element
\[
\ep_{-i,j} \in \ep_{-i}
\]
is of Lie type: if 
\[
\nu: \Uu \to \Uu \otimes \Uu
\]
denotes the comultiplication, then
\[
\nu(\ep_{-i,j}) = 1\otimes \ep_{-i,j} + \ep_{-i,j} \otimes 1.
\tag{$*$}
\]
Let
\[
\Pp'_{i} = \Ee_{i} \cup \Pp_{i}.
\]
According to Proposition \ref{0}, the set $\Dd_i$ of monomials in $\Pp'_{< i}$ forms a basis of $D_i$. Let 
\[
\Aa_i = \Ee_i \cup \Pp_i \cup \Dd_i.
\]
It suffices to check the equality ($*$) after pairing with an arbitrary basis element
\[
\Aa_{i',j'}\otimes \Aa_{i'', j''} 
\]
of $A_{i'} \otimes A_{i''}$
with $ i'+i'' = i$. We have
\begin{align*}
\langle \nu(\ep_{-i,j}),  \Aa_{i',j'}\otimes \Aa_{i'', j''}  \rangle
	&= \langle \ep_{-i,j},  \Aa_{i',j'} \cdot \Aa_{i'', j''}  \rangle\\
	&= 	\left\{
		\begin{tabular}{ll}
		1 & {\mbox if  $\Aa_{i',j'} = 1$ and 
			$\Aa_{i'', j''} = \Ee_{i,j}$ is dual to $\ep_{-i, j}$ } \\
		1 & {\mbox if $\Aa_{i',j'} =  \Ee_{i,j}$ and $ \Aa_{i'', j''} = 1$ } \\
		0 & \mbox{ otherwise}
		\end{tabular}
		\right. \\
	&= \langle 1 \otimes \ep_{-i,j},  \Aa_{i',j'}\otimes \Aa_{i'', j''}  \rangle +
		\langle \ep_{-i,j} \otimes 1,   \Aa_{i',j'}\otimes \Aa_{i'', j''}  \rangle \\
	&= \langle 1 \otimes \ep_{-i,j} + \ep_{-i,j} \otimes 1,  \Aa_{i',j'}\otimes \Aa_{i'', j''}  \rangle
\end{align*}
which shows that $\ep_{-i,j}$ is of Lie type as claimed. 

It follows that $\ep_{-i}$ is a subset of the graded piece $\nN_{-i}$ of the Lie algebra $\nN \subset \Uu$, which maps to a basis of $\nN^\ab_{-i}$. It follows that $\ep$ forms a set of free generators as stated. 
\end{proof}

\segment{1}{}%%%%%%%%%
Recall that by an \emph{open integer scheme} we mean on open subscheme
\[
Z \subset \Spec \Oo_K,
\]
$K$ a number field. By a \emph{number scheme} we mean $\Spec K$, $K$ a number field. Given $Z$ an open integer or number scheme, we let
\[
A(Z) = \Oo(U(Z))
\]
denote the graded Hopf algebra of unramified mixed Tate motives over $Z$. 

\segment{FI}{}%%%%%%%%%
Given an open integer scheme $Z$ with function field $K$ and a unipotent iterated integral $I_a^b(c_1, \dots, c_r) \in A(K)_n$, we say that $I$ is \emph{combinatorially unramified over $Z$} if the associated reduced divisor
\[
D = \{a, b, c_1, \dots, c_r\}
\]
is \'etale over $Z$. We denote the $\QQ$-vector space of formal linear combinations of such tuples $(a; c_1, \dots, c_r; b)$ by $\CUI(Z)_r$, the space of \emph{formal integrands in half-weight $r$}.

\segment{2}{}%%%%%%%%
If $k$ is a field equipped with an absolute value $|\cdot|$, we say that a subset of $k^n$ is $\ep$-linearly independent if each of the associated determinants has absolute value greater than $\ep$.

\segment{3}{}%%%%%%%%%
Let $Z$ be an open integer scheme and $p \in \ZZ$ a prime such that $Z$ is totally split above $p$. Recall that $A(\Oo_\pP)$ denotes the graded Hopf algebra of mixed Tate filtered $\phi$ modules over $K_\pP$, and recall that $A(\Oo_\pP)$ possesses a \emph{standard basis}. We say that a subset 
\[
\Pp \subset A(Z)_n
\]
is $\ep$-\textit{linearly independent relative to $\Re_p$} if its image in $\prod_{\pP | p}A(\Oo_\pP)_n$ is $\ep$-\textit{linearly independent} with respect to the standard basis.

\segment{4}{Realization algorithm}%%%%%%%

\ssegment{13a}{}%%%%%
We Recall from paragraph \ref{int16} that $U(\Zp)$ denotes the unipotent fundamental group of the category of mixed Tate filtered $\phi$ modules, that it contains a special $\Qp$-point $u$, and that the family
\[
v_i = (\log u)_i \in \nN(\Qp)
\]
for $i \in \ZZ_{\le -1}$ forms a set of free generators. The associated shuffle basis of $A(\Zp)$ (which is dual to the basis of the universal enveloping algebra consisting of words in the generators)
 is what we call the \emph{standard basis}. We now construct an algorithm for evaluating an iterated integral $I_a^b(\om)$, whose associated divisor $D$ is a union of $\Zp$-points, on a word
\[
w =v_{-i_r} \cdots  v_{-i_2}  v_{-i_1}
\]
in the generators $v_i$ to given precision $\ep$. 

\ssegment{bbb1}{}%%%%%
Let $Z \subset \Spec \Oo_K$ be an open integer scheme, and $\pP \in Z$ a prime which is totally split. Recall from segment \ref{FI} that $\CUI(Z)_r$ denotes the $\QQ$-vector space of combinatorially unramified integrands in half-weight $r$. The \emph{realization algorithm}, alluded to above and constructed in segment \ref{bee3} below, may be interpreted as an algorithm which takes a natural number $r$ and an $\ep \in p^\ZZ$ as input, and returns a linear map
\[
\IStdd: \CUI(Z)_r \to \prod_{\pP|p} A(\Op)_r
\]
given explicitly by a matrix with rational entries. If
\[
\Real: A(Z)_r \to \prod_{\pP|p} A(\Op)_r
\]
denotes the realization map, and 
\[
{^UI}: \CUI(Z)_r \to A(Z)_r
\]
denotes the map taking an integrand to the associated unipotent iterated integral, then the triangle
\[
\xymatrix{
\CUI(Z)_r \ar[d]_-{^UI} \ar[dr]^-{\IStdd} \\
A(Z)_r  \ar[r]_-{\Real} &
\prod_{\pP|p} A(\Op)_r
}
\]
fails to commute by at most $\ep$. Said differently, $\IStdd$ is an approximation of the matrix representing the composite
\[
{^U I^{F\phi}} :=\Real \circ ({^U I})
\]
with respect to the `standard' bases on source and target. An example is worked out in segment 7.5.3 of \cite{mtmue}. Note, however, that the triviality of the motivic Galois action on the polylogarithmic quotient makes that example deceptively simple compared to the general algorithm that follows. 

We begin in segments \ref{ant1}--\ref{ant6} by deriving a formula (Lemma \ref{ant2}) for the action of the special $\Qp$-point $u$ of the unipotent mixed Tate filtered $\phi$ Galois group $U(\ZZ_p)$ on any generator of the unipotent fundamental group of $X := \AA^1_\Qp \setminus D$. We then note in segment \ref{ant7} that the formula of Lemma \ref{ant2} gives rise to an algorithm for computing the action of $u$ on any word in the generators. In segment \ref{ant8} we extend this algorithm to include not only fundamental groups but also path torsors. After a few elementary observations regarding the matrix entries of a graded representation of a graded Lie algebra on a graded vector space equipped with a graded basis on which we do not wish to impose an ordering (segments \ref{bee1}--\ref{bee2}), and after constructing a certain family of polynomials with rational coefficients based on these observations (segment \ref{bee2.5}), we construct the realization algorithm in segment \ref{bee3} and we state and verify the correctness of its output in segments \ref{bee4}--\ref{bee5}. 

\ssegment{ant1}{}%%%%%%%%
Although our application is global, this algorithm may equally be constructed in a purely $p$-adic situation.\footnote{
There's a slight caveat: where the algorithm and its subalgorithms take $p$-adic numbers an input, those must be specified by a finite amount of data. This means that the domain of the algorithm must be restricted to those $p$-adic numbers which can be specified by a finite amount of data. However, since we've agreed not to keep track of $\ep$, this restriction on the domain need not concern us any further. 
} In order to minimize the number of decorations, we introduce notation specific to the present situation; these will remain in effect through the proof of Proposition \ref{bee5}.

We let $\Fphi$ denote the category of mixed Tate filtered $\phi$ modules over $\Qp$. We let
\[
\Omega: \Fphi \to \Vect(\Qp)
\]
denote the forgetful functor. As in paragraph \ref{int16}, we let $U(\Zp)$ denote the unipotent part of the Tannakian fundamental group $\Aut^\otimes(\Omega)$. Given $E \in \Fphi$, $v \in \rho(E)$ and $f \in \rho(E)^\lor$, we let 
\[
[E, v, f]
\]
denote the function 
\[
U(\Zp) \to \AA^1_\Qp
\]
given on a point $\ga$ with values in an arbitrary $\Qp$-algebra by
\[
\ga \mapsto f(\ga v).
\]
Recall that $u$ denotes the $\Qp$-point of $U(\Zp)$ associated to the $p$-adic period map. 

Let $D$ be a finite set of elements of $\Zp$ no two of which are congruent modulo $p$. Let $X := \AA^1_{\Zp } \setminus D$. We consider two $\Zp$-integral base points $a,b$ of $X$. We let $_aP_a$ denote the filtered $\phi$ realization of the unipotent fundamental group of $X_\Qp$ at $a$, a unipotent group object of $\Fphi$. We let $_b P_a$ denote the filtered $\phi$ realization of the unipotent path torsor. We let $_a \Uu_a$ denote the completed universal enveloping algebra of $_aP_a$
and let
\[
{_b \Uu_a} := {_a \Uu_a} \times_{_aP_a} {_bP_a}
\]
be the associated rank one free module.  

\Lemma{ant2}%%%%
{
We put ourselves in the situation and the notation of segment \ref{ant1}. Consider an element $c$ of the set  $D$ of punctures and a $\Zp$-integral base point $a$ of $X(\Zp)$. Let  $e^c$ denote the element of $_a \Uu_a$ associated to monodromy about $c$. Then we have the equality
\[
u e^c = p 
\left(
\sum_{\eta} 
(\int_c^a \eta)
  \eta
  \right)
\cdot e^c \cdot
\left(
\sum_\om
(\int_a^c \om) \om
\right)
\]
in the noncommutative formal power series ring $_a \Uu_a$. Both sums run over the set of words in the family of differential forms 
\[
\left\{  \frac{dt}{t-d}  \right\}_{d \in  D}
\]
and the integrals are regularized with respect to the unit tangent vector at $c$. The integral of the empty word is defined to be $1$.
}

\bigskip
The proof (which is purely formal) spans segments \ref{ant3}-\ref{ant6}.

\ssegment{ant3}{}%%%%%%%
We recall that each path torsor $_bP_a$ possesses a unique $\Qp$-valued point contained in step $0$ of the Hodge filtration \[
p^\dR = {_bp_a^\dR}
\]
and a unique $\Qp$-valued point
\[
p^\cris = {_bp_a^\cris}
\]
fixed by Frobenius. We have
\[
u p^\dR = p^\cris.
\]
We use the de Rham path $_bp^\dR_a$ to identify $_b \Uu_a$ with $_a \Uu_a$. Let us write  $_b \om _a$ for a word $\om$ \textit{regarded as an element of} $_b \Uu_a$. Thus, 
\[
{_b \om_a} =  {_bp_a^\dR} \cdot {_a \om_a}.
\] 
In this notation,
\[
 {_bp_a^\dR} = {_b 1_a}.
\]
We denote $_c e^c_c$ simply by $\ep^c$.

By Besser's definition of the $p$-adic iterated integrals
\[
\int_a^b \om,
\]
we have 
\[
 {_bp_a^\cris} =
  \sum_\om (\int_a^b \om) {_b \om_a}
\]
where the integral of the empty word is defined to be $1$. 

\ssegment{ant4}{}%%%%%%%
We have 
\[
\tag{*}
 {_bp_a^\dR} \cdot {_a \om _a} =
  {_b \om _b} \cdot {_bp_a^\dR}.
\]
Indeed, this equality reduces to the case of a one-letter word
\[
\om = {e^c}.
\]
We have
\[
_b e^c_b =
 {_b p^\dR_c} \cdot \ep^c \cdot {_c p^\dR_b}.
\]
Since the composition of de Rham paths is again a de Rham path, it follows that both sides of equation (*) (when $\om = e^c$) are equal to 
\[
{_b p^\dR_c} \cdot \ep^c \cdot {_c p^\dR_a}.
\]

\ssegment{ant5}{}%%%%%
By segment \ref{ant4}, we have for any word $\om$ in the set $D$ of punctures and any points $a, b, c$:
\[
{_c \eta_b} \cdot {_b \om _a} = 
{_c (\eta \om)_a}.
\]

\ssegment{ant6}{}%%%%%
We thus have
\begin{align*}
u(_ae_a^c) &=
u( {_ap^\dR_c} \cdot \ep^c \cdot {_c p^\dR_a} )
\\
&= u( {_ap^\dR_c}) \cdot u(\ep^c) \cdot u( {_c p^\dR_a} )
\\
&= 
\left(
\sum_{\eta} 
(\int_c^a \eta)
  {_a\eta_c}
  \right)
\cdot p \ep^c \cdot
\left(
\sum_\om
(\int_a^c \om) 
{_c\om_a}
\right)
\\
&=
{_a
( p
\left(
\sum_{\eta} 
(\int_c^a \eta)
  \eta
  \right)
\cdot e^c \cdot
\left(
\sum_\om
(\int_a^c \om) \om
\right)
)
_a}.
\end{align*}
This completes the proof of Lemma \ref{ant2}.

\ssegment{ant7}{Action of $u$ on an arbitrary loop-word}%%%%
Let $\om$ be a word in the set $D$ of punctures regarded as an element of the completed universal enveloping algebra ${_a \Uu_a}$ at a base-point $a$ of $X$. Since the action of $u$ on $_a \Uu_a$  respects multiplication, Lemma \ref{ant2} coupled with the algorithm of \cite{PItInts} for computing $p$-adic iterated integrals provides an algorithm for computing any coefficient of the noncommutative formal power series $u {\om}$ to precision $\ep$.

\ssegment{ant7.5}{Remark on the unipotent nature of the action on loop algebras}%%%%%
We recall that the generators $e^c$ ($c \in D$) of the completed universal enveloping algebra $_a \Uu_a$ (or ``loop algebra'') have half-weight $-1$ and that the action of $U(\Zp)$ (and hence of $u$) on ${_a\Uu_a}$ is unipotent with respect to the weight filtration. More specifically, the formula of Lemma \ref{ant2} shows that $u \om$ has coefficient $1$ in front of the word $\om$ itself, and that every word which occurs with nonzero coefficient has $\om$ as a subword (by which we mean a subsequence of not necessarily consecutive letters).

\ssegment{ant8}{Action of $u$ on arbitrary path-words}%%%%%
Let $\om$ be a word in the set $D$ of punctures regarded as an element of the completed universal enveloping bimodule ${_b \Uu_a}$ associated to a pair of base-points $a$ and $b$ of $X$.
The algorithm of segment \ref{ant7} may be upgraded to an algorithm which computes any coefficient of the noncommutative formal power series $u {\om}$ to precision $\ep$, or, which is the same, an algorithm which computes the $p$-adic period
\[
[{_b \Uu_a}, \om, f_{\om'}](u)
\]
 of any matrix entry
\(
[{_b \Uu_a}, \om, f_{\om'}].
\)
 Indeed, 
\(
[{_b \Uu_a}, \om, f_{\om'}](u)
\)
is the $\om'$-coefficient of the noncommutative formal power series 
\[
\tag{*}
u({_b \om_a})
 = u ( {_b p^\dR_a} \cdot {_a \om_a})
 = {_b p^\cris_a} \cdot u({_a \om_a}).
\]
Using the algorithm of segment \ref{ant7}, we expand $u({_a \om_a})$ as a linear combination of words
\[
u({_a \om_a}) = \sum_\eta c_\eta \cdot {_a\eta_a}
\]
with coefficients $c_\eta \in \Qp$ computed to $p$-adic precision $\ep$. We then have
\begin{align*}
[{_b \Uu_a}, \om, f_{\om'}](u) 
&= f_{\om'} (u(_b \om_a) )
\\
&= f_{\om'} ( {_b p^\cris_a} \cdot u({_a \om_a}))
\\
&= f_{\om'} ( {_b p^\cris_a} \cdot
 \sum_\eta c_\eta \cdot {_a\eta_a} )
\\
&= \sum_\eta c_\eta 
f_{\om'} ( {_b p^\cris_a} \cdot  {_a\eta_a} )
\\
&= \sum_\eta c_\eta \int_a^b \om'/\eta
\end{align*}
where the right-division $\om'/\eta$ is defined to be zero whenever $\om'$ is not right-divisible by $\eta$. Using \cite{PItInts} again we compute these last $p$-adic iterated integrals to precision $\ep$. 

\ssegment{bee0.5}{Remark on the unipotent nature of the action on path modules}%%%%%
We recall that the generators $e^c$ ($c \in D$) of the completed universal enveloping bimodule $_b \Uu_a$ (or ``path module'') have half-weight $-1$ and that the action of $U(\Zp)$ (and hence of $u$) on ${_b\Uu_a}$ is unipotent with respect to the weight filtration. This squares with the computation of segment \ref{ant8}. To see this more clearly, we repeat the computation in slightly different notation: we have
\begin{align*}
u({_b \om _a}) 
	&= {_bp^\cris_a} \cdot u({_a \om_a}) \\
	&= \sum_{\theta} \left( \int_a^b \theta \right) 
	{_b \theta_a} \cdot \sum_\eta c_\eta {_a \eta_a} \\
	&= \sum_{\theta, \eta} \left(
	c_\eta \int_a^b \theta
	\right)
	{_b \theta\eta_a}.
\end{align*}
We find that the coefficient in front of the word $\om = \eta$ ($\theta = 1$) is $1$, and that all words occurring in the sum are left-multiples of words which contain $\om$ as a subword.

\ssegment{bee1}{Elementary remarks on matrices with respect to unordered bases}%%%%
Let $k$ be a field, $V$, $W$ finite dimensional vector spaces,
\[
\phi:V \to W
\]
a linear map, $\Vv$ a basis of $V$ and $\Ww$ a basis of $W$. Then the associated matrix is indexed by the set $\Vv \times \Ww$. The entry associated to the pair $(v,w)$ is given by
\[
{_w \phi_v} = w^\lor(\phi v)
\]
where $w^\lor$ is the linear functional on $W$ dual to $w$ with respect to the basis $\Ww$. 

If $V = \bigoplus_i V_i$, $W = \bigoplus_j W_j$ are finite direct sums of finite dimensional vector spaces with bases $\Vv_i$, $\Ww_j$ and $\phi = \bigoplus_{i,j} \phi_{i,j}$ is a direct sum of linear maps
\[
\phi_{i,j}: V_i \to W_j ,
\]
then the matrix associated to $\phi$ is given in terms of the matrices of the $\phi_{i,j}$ as follows: if $v \in \Vv_i$ and $w \in \Ww_j$ then
\[
{_w \phi_v} = {_w (\phi_{i,j})_v}.
\]

\ssegment{bee2}%%%%%%%%%
{Elementary remarks on graded pieces of graded representations}
Let $\gG = \bigoplus_n \gG_n$ be a graded Lie algebra over a field $k$, $E = \bigoplus E_i$ a finite dimensional graded vector space,
\[
\rho: \gG \to \gl E
\]
a graded representation. Let $\Uu$ denote the universal enveloping algebra of $\gG$. Then the induced ring homomorphism 
\[
\rho: \Uu \to \End E
\]
preserves gradings. We spell out what this means. We let 
\[
\End^n E \subset \End E
\]
denote the subspace of homomorphisms which are graded of graded degree $n$:
\[
\End^n E = \bigoplus_i \Hom(E_i, E_{i+n}).
\]
Then $\phi$ sends the $n$th graded piece $\Uu_n$ of $\Uu$ into $\End^n E$. This also means that $\rho$ is compatible with projections, in the sense that the squares
\[
\xymatrix{
\Uu \ar[d] \ar[r] & \End E \ar[d] \\
\Uu_n \ar[r] & \End^n E
}
\]
commute. 

In terms of matrices, the projection has the effect of setting entries in all other graded degrees equal to zero. More precisely, if $\phi \in \End E$ has $n$th graded piece $\phi^n \in \End^n E$, if $\Ee = \bigcup_i \Ee_i$ is a graded basis of $E$ and if $v \in \Ee_i$, $w \in \Ee_j$ are basis vectors of graded degrees $i$ and $j$, respectively, then 
\[
{_w \phi^n_v} = 
\left\{
\begin{matrix}
{_w \phi_v} & \mbox{if } j-i = n \\
0 & \mbox{otherwise.}
\end{matrix}
\right.
\]
To see this, let $P_i$ denote the idempotent  
\[
E \surj E_i \inj E
\]
associated to the $i$th graded piece. Then 
\[
\phi^n(v) = P_{i+n} \phi(v),
\]
so
\begin{align*}
{_w \phi^n_v} &= w^\lor  P_{i+n} \phi(v).
\end{align*}
We complete the verification by noting that 
\[
w^\lor \circ P_{i+n}
 = 
\left\{
\begin{matrix}
w^\lor  & \mbox{if } j = i+n \\
0 & \mbox{otherwise.}
\end{matrix}
\right.
\]

\ssegment{bee2.5}{Universal polynomials for entries of products of graded pieces of the logarithm of a matrix}%%%%%%
Let $\Word(D)$ denote the set of words in the set $D$ of punctures. Let $R$ be the polynomial $\QQ$-algebra 
\[
R = \QQ[\Word D \times \Word D]
\]
graded by setting the degree of a word equal to minus its length as usual. We denote the generator associated to a pair of words $\om, \eta$ by $x_{\om, \eta}$. Let $M$ be the $\Word D \times \Word D$-matrix whose $(\om, \eta)$th entry $_\eta M_\om$ is $1$ if $\om = \eta$, $x_{\om, \eta}$ if $\eta$ contains $\om$ as a subword, and $0$ otherwise. Then the logarithm of $M$ converges (in the sense that each entry is a polynomial). We let
\[
N = \log M.
\]
For any integer $l$, we define a new matrix $^l N$ with entries
\[
{_\eta^l N_\om} = 
\left\{
\begin{matrix}
{_\eta N_\om} & \mbox{if } |\eta|-|\om| = l \\
0 & \mbox{otherwise.}
\end{matrix}
\right.
\]

Let $\om$ be a word of length $n$ in $D$ and let
\[
w = l_1  \cdots  l_r
\]
be a word in the set $\ZZ_{<0}$ of negative integers such that 
\[
l_1 + \cdots + l_s = -n. 
\]
We define a polynomial $P(\om, w) \in R$ by taking the $(\emptyset, \om)$th entry
\[
P(\om, w) = 
{_\om [ {^{l_1}N \dots ^{l_s}N} ]_1}
\]
of the product of graded pieces of $N$ indicated by $\om$.

\ssegment{bee3}{Main algorithm}%%%%%%%%%
We now arrive at the construction of the realization algorithm. As input, the algorithm takes a positive real number $\ep$, a prime number $p$, a finite set $D$ of elements of $\Zp$ and two further elements $a,b$ (given up to $p$-adic precision $\ep$), a natural number $n$, a word $\om$ of length $n$ in the set $D$, and a word $w$ of degree $n$ in the set of symbols
\[
\{v_{-1}, v_{-2}, v_{-3}, \dots \}
\]
indexed and weighted by the negative integers. Distinct points in the set $D \cup \{a,b\}$ must not be congruent modulo $p$, but the elements $a,b$ may belong to the set $D$ (in the latter case, they will be treated as unit tangent vectors). The output consists of a single element
\[
\Aa_{\rm{Real}}(\ep, p, D, a, b, \om, w)
\]
 of $\Qp$ given up to $p$-adic precision $\ep$. To construct it, we first check which variables $x_{\theta, \eta}$ intervene in the polynomial $P(\om, w)$ constructed in segment \ref{bee2.5}. For each such variable, we apply the subalgorithm of segment \ref{ant8} to compute the element 
 \[
 [{_b \Uu_a, \eta, f_{\theta}} ](u)
 \]
of $\Qp$ to precision $\ep$. We then output the value
\[
\Aa_{\rm{Real}}(\ep, p, D, a, b, \om, w) =
P(\om, w) \left(
\left\{
 [{_b \Uu_a, \eta, f_{\theta}} ](u)
\right\}_{\eta, \theta}
\right).
\]

\ssegment{bee4}{}%%%
We now announce the meaning of the output. In terms of the input, we let $X = \AA^1_{\Qp}\setminus D$, and we work with the filtered $\phi$ unipotent path bimodule ${_b \Uu_a}$ on $X$. Let $n$ be the length of the word $\om$. As in segment 4.9 of \cite{mtmue}, we define the \emph{unipotent filtered $\phi$ iterated integral} $I_a^b(\om) \in A(\Zp)_n$ to be the Tannakian matrix entry
\[
I_a^b(\om) = [ {_b \Uu_a}, {_b 1_a}, f_\om]. 
\]
Via the isomorphism 
\[
A(\Zp)_n = \Uu(\Zp)_{-n}^\lor,
\]
the unipotent filtered $\phi$ iterated integral $I_a^b(\om)$ may be \textit{evaluated} at the element $w \in \Uu(\Zp)_{-n}$.

\Proposition{bee5}{
The \emph{realization algorithm} $\Aa_{\rm{Real}}$ halts. Moreover, in the notation of segment \ref{bee4}, its output is within $\ep$ of the $p$-adic number $I_a^b(\om)(w)$. 
}

\begin{proof}
The halting presents no issue. Turning to the verification of the correctness of the output, we fix an arbitrary input datum
\[
(\ep, p, D, a, b, \om, w)
\]
and we set ourselves the task of computing $I_a^b(\om)(w)$ in terms of the periods
\[
[{_b \Uu_a, \eta, f_{\theta}} ](u);
\]
this is mostly a matter of rearranging definitions. Let
\[
{_b \Uu_a^{ \ge -n } } = {_b \Uu_a}/{_b \Uu_a}I^{n+1}
\]
where $I$ denotes the augmentation ideal of ${_a\Uu_a}$. The quotient module ${_b \Uu_a^{ \ge -n } }$ has vector space basis consisting of words of length $\le n$. We note that 
\[
[{_b \Uu_a^{\le -n}, \eta, f_{\theta}} ](u)
=
[{_b \Uu_a, \eta, f_{\theta}} ](u)
\]
so long as $\eta$ and $\theta$ are both of length $\le n$.

Let $_b \rho_a$ denote homomorphism of graded $\Qp$-algebras  
\[
\Uu(\Zp) \to \End {_b \Uu_a^{\le -n}}
\]
induced by the action of the filtered $\phi$ Galois group $U(\Zp)$ on the path bimodule $_b \Uu_a$. Then
\[
I_a^b(\om)(w) = {_\om[{_b\rho_a}(w) ]_1}
\]
is the $(1,\om)$th entry of the matrix associated to the endomorphism ${_b\rho_a}(w)$ of ${_b \Uu_a^{\le -n}}$. To compute it, write $w$ as a product of letters
\[
w = l_1 \cdots l_s
\]
which we identify with the negative integers which parametrize them (while taking care \textit{not} to confuse the above juxtaposition of letters with the product of integers). Meanwhile, recall that for $i$ a negative integer, 
\[
v_i = {^i(\log u)}
\]
is the $i$th graded piece of $\log u$. Thus, if we set $M$ equal to the matrix associated to ${_b \rho_a}(u)$, and $N = \log M$, we have
\[
{_b \rho_a}(v_i) =  {^i N}.
\]
Consequently,
\begin{align*}
{_b \rho_a}(w) 
	&= {_b \rho_a}(l_1) \cdots {_b \rho_a}(l_s) \\
	&= {^{l_1} N} \cdots {^{l_s} N}.
\end{align*}
Putting the pieces back together, we have
\begin{align*}
I_a^b(\om)(w) 
	&= {_\om[{_b\rho_a}(w) ]_1} \\
	&= P(\om, w)
		\left(
		\left\{
		{_\theta M_\eta}
		\right\}_{\eta, \theta} 
		\right) \\
	&\sim_\ep P(\om, w)
		\left(
		\left\{
		\widetilde
		{_\theta M_\eta}
		\right\}_{\eta, \theta} 
		\right) \\
	&= \Aa_{\rm{Real}}(\ep, p, D, a, b, \om, w)
\end{align*}
where
$\widetilde
		{_\theta M_\eta}$
denotes the $\ep$-approximation produced by the algorithm of segment \ref{ant8} and $\sim_\ep$ signals an error bounded by $\ep$ (up to an admissible change in $\ep$). 
\end{proof}

%%%%%%%%%%%%%%%%%%%%%%%%%
\segment{6}{Basis algorithm}%%%%%%%%%%

\ssegment{6.1}{}%%%%%%%
We now construct an algorithm which takes as input an open integer scheme
\[
Z \subset \Spec \Oo_K,
\] 
a prime $p$ of $\ZZ$, a natural number $n$, and an
\[
\ep \in p^\ZZ,
\]
and returns the following data.
\begin{enumerate}
\item
An open subscheme $\Zo \subset Z$. We write
\[
\overline S = \{\qQ_1, \dots, \qQ_{\bar s} \}
\]
for its complement.
\item
Sets
\[
\Ee^g_1 = \{\log^U \al_{1,1}, \dots, \log^U \al_{1,r_1+r_2-1}\},
\]
\[
\Ee^r_1 = \{\log^U \be_1, \dots, \log^U \be_{\bar s} \}
\]
of unipotent logarithms of elements of $\Oo_\Zo^*$.
\item
 For each integer $n' \in [2, n]$,
\begin{enumerate}
\item
a set of single-valued unipotent polylogarithms
\[
\widetilde \Ee_\npr = 
\{\Li^{U, sv}_\npr(a_{n', 1}), \dots, \Li^{U, sv}_\npr(a_{n', e_{n'}}) \},
\]
where $e_m$ denotes the dimension of the motivic extension space \[
\Ext^1_\Zo \big( \QQ(0), \QQ(m) \big),
\]
\item
a set $\Pp_\npr$ of unipotent iterated integrals of half-weight $n'$,
\item
an $\ep' \in p^\ZZ$,
\item
an algorithm which takes a pair $I,J$ of unipotent iterated integrals of half-weight $n'$ as input and returns a rational number
\[
\langle I,J \rangle_{\ep'} \in \QQ.
\] 
\end{enumerate}
\end{enumerate}
We denote this algorithm by $A_\m{Basis}$. We first announce the meaning of its output in proposition \ref{ABasis}; we then construct the algorithm in segments \ref{13f}--\ref{T5}, and prove the proposition in segments \ref{T7}--\ref{T8}.

\Proposition{ABasis}{
\begin{enumerate}
\item
Suppose $A_\m{Basis}(Z, p, n, \ep)$ halts. Then we have:
\begin{enumerate}
\item
$\Ee^g_1$ forms a basis of $A(\Spec \Oo_K)_1$.
\item
$\Ee^g_1 \cup \Ee^r_1$ forms a basis of $A(\Zo)_1$.
\item
Each $\widetilde \Bb_\npr:= \widetilde \Ee_{n'} \cup \Pp_\npr $ ($n' =2, 3, \dots, n$) forms a basis for a subspace $\widetilde B_\npr$ of $A(\Zo)_\npr$ complementary to the space $D_\npr$ of decomposables. Moreover, the space $P_\npr$ spanned by $\Pp_\npr$ is disjoint from the space $E_\npr$ of extensions.
\item
Relative to this basis, the projection $\Ee_\npr$ of $\widetilde \Ee_\npr$ onto $E_\npr$ forms a basis of $E_\npr$.
\item
We let $\Bb_\npr = \Ee_\npr \cup \Pp_\npr$, we let $\Dd_\npr$ denote the set of monomials in $\Bb_{<n'}$, we let
\[
\Aa_{n'} = \Bb_\npr \cup \Dd_\npr,
\]
and we denote by $|\cdot|_{\Aa}$ the norm induced on $A(\Zo)_\npr$ by the basis $\Aa_\npr$.
If $\Li^{E, sv}_\npr(a_{n', i})$ denotes the projection of $\Li^{U, sv}_\npr(a_{n', i})$ onto $E_\npr$ then we have
\[
\left| \Li^{U, sv}_\npr(a_{n', i}) - \Li^{E, sv}_\npr(a_{n', i}) \right|_{\Aa}
< \ep'.
\]
\item
We have
$
\ep' \le \ep.
$
\item
If $I,J$ are unipotent iterated integrals of half-weight $n'$, and 
\[
\langle I,J \rangle_{\Aa}
\]
denotes the inner product in which the basis $\Aa_\npr$ is orthonormal,
 then
\[
\left| \langle I,J \rangle _{\ep'} - \langle I,J \rangle_{\Aa} \right|_p < \ep'.
\]
In other words, the algorithm produced as part (d) of the output of $A_\m{Basis}$ computes this inner product up to precision $\ep'$.
\end{enumerate}
\item
If Zagier's conjecture (conjecture \ref{int14}), Goncharov exhaustion (conjecture \ref{int15}) and the Hasse principle for finite cohomology (condition \ref{hassest}) hold for $n$, $Z$, and $K$, then the computation $A_\m{Basis}(Z, p, n, \ep)$ halts.
\end{enumerate}
}

\ssegment{13f}{}%%%%
We write $d_G$ for the reduced Goncharov coproduct, regarded as a map
\[
\CUI(Z)_r \to (\CUI(Z)_{>0})^{\otimes 2}_r.
\]

\ssegment{02_a}{}%%%%%%%%%
The algorithm $A_\m{Basis}$ searches arbitrarily through the countably-infinite set of data $(\Zo, \widetilde \Ee_{\le n}, \Pp_{\le n}, \ep')$. For the rest of the construction, we fix such a datum, and construct an algorithm which returns a boolean argument, as well as a function $\langle \cdot, \cdot \rangle _{\ep'}$. If the boolean result if {\it False}, we start over with a new datum. If the boolean result is {\it True}, we output
\[
(\Zo, \widetilde \Ee_{\le n}, \Pp_{\le n}, \ep', \langle \cdot, \cdot, \rangle_{\ep'}).
\]

 For the base case with $n'=1$ we require our basis to be of the form given in the proposition, with
\[
\{ a_{1,1}, \dots, a_{1,r_1+r_2-1} \}
\]
a basis for $\Oo_K^*$, and each $b_i$ a generator for a power of $\qQ_i$.

\ssegment{T1}{}%%%%%%%%%%
We assume for a recursive construction that conditions (a)--(f) have been verified in half-weights $<n'$, and that the algorithm computing the inner products 
$\langle I,J \rangle_{\ep'}$ has been constructed in half-weights $<n'$.   The inner products give us maps
\[
\widetilde {^UI_{\Aatil}}: \CUI(\Zo)_r \to A(\Zo)_r
\]
and
\[
\widetilde {^UI_{\Aatil}}^{\otimes 2}:
 \CUI(\Zo)^{\otimes 2}_r \to A(\Zo)^{\otimes 2}_r
\]
in the form of explicit matrices with rational coefficients with respect to the bases $\Aa_{<n'}$ of iterated integrals already constructed in lower half-weights.

\ssegment{06_a}{}%%%%%%%%
We check if the divisors associated to the iterated integrals in
$
\Bb_{n'}
$
are \'etale over $\Zo$; if not, we return {\it false}. 

\ssegment{T2}{}%%%%%%%%%
We check each element $I$ of $\widetilde \Ee_\npr$ for proximity to $E_\npr$. To do so, we lift $I$ to an integrand $w \in \CUI(\Zo)_\npr$, compute $\widetilde{^UI_{\Aatil}}(d_G(w))$, and check that the $p$-adic norm of the resulting vector is $< \ep'$. If not, we return {\it False}.

\ssegment{T3}{}%%%%%%%%%
We check
\[
\IStdd(\widetilde \Ee_\npr)
\]
for $\ep'$-linear independence in the sense of segments \ref{2}, \ref{3} using the \emph{realization algorithm} of segment \ref{4}. If this fails, we return {\it False}.

\ssegment{T4}{}%%%%%%%%
We check $d(\Pp_\npr \cup \widetilde \Dd_\npr)$ for $\ep'$-linear independence by lifting $\Pp_\npr$, $\widetilde \Dd_\npr$ to $\CUI(\Zo)$ and applying $\IAtil^{\otimes 2} \circ d_G$. If this fails, we return {\it False}. 

\ssegment{T4.5}{}%%%%%%%%%
We check that $\ep'$ is sufficiently small compared to the spread of the basis $\Aatil_\npr$ that the projection onto the space $E_\npr$ of extensions will preserve the linear independence of $\widetilde \Aa_\npr$. If this fails, we return {\it False}. Otherwise we return {\it True}.

\ssegment{T5}{}%%%%%%%%
For the inner product in half-weight $n'$, it suffices to construct
\[
\langle w,x \rangle_{\ep'}
\]
for $x \in \CUI(\Zo)_\npr$ arbitrary and $w \in \widetilde \Aa_\npr$ a basis element. We first construct the inner products
\[
\set{ \left\langle w, x \right\rangle }
{w \in \Pp_\npr \cup \Ddtil_\npr}.
\]
It may happen that 
$\widetilde{^UI_{\Aatil}}^{\otimes 2} \big( d_G(w) \big)$
 is not in the span $V$ of the set
\[
\Vv :=
\IAtil^{\otimes 2}\big(d_G (\Pp_\npr \cup \Ddtil_\npr)\big).
\tag{$*$}
\]
Nevertheless, we may compute the projection $w'$ of $w$ onto $V$ with respect to the basis $(\Aatil^{\otimes_2})_\npr$. Subsequently, expanding $w'$ in the set $\Vv$ is a matter of solving a system of linear equations with rational coefficients;
 the linear independence of ($*$) established in step \ref{T4} above, implies the uniqueness of the solution. 

To compute the remaining inner products 
\[
\set{ \left\langle w, x \right\rangle_{\ep'} }
{w \in \Eetil_\npr},
\]
we replace $x$ by
\[
x' = x - \sum_{w\in \Pp_\npr \cup \Ddtil_\npr} 
 \left\langle x, w \right\rangle_{\ep'} w.
\] 
We then compute the projection of $\IStdd(x')$ onto the span of $\IStdd(\Eetil_\npr)$ inside $\prod_{\pP|p} A(\Op)$. The linear independence of the latter, established in segment \ref{T3} above, ensures that the resulting system of linear equations will have a unique solution.

 This completes the construction of the algorithm.

\ssegment{T7}{}%%%%%%
We now prove proposition \ref{ABasis}. Suppose as in part (1) of the proposition that $A_\m{Basis}(Z,p,n, \ep)$ halts. Parts (a), (b) are clear. For parts (c) and (d) we note that if $\ep$-approximations are $\ep$-linearly independent, then the actual vectors are linearly independent. Part (e) is clear, except for perhaps the admissibility of the change in $\ep'$; see segment \ref{met_lin} below. For part (f) we of course limit ourselves to searching through data satisfying $\ep' \le \ep$ in the first place. 

For part (g), we note that the square below, left, commutes.
\[
\xymatrix{
A(\Zo)_r \ar[r]^-d &
(A(\Zo)^{\otimes 2})_r &
A(\Zo)_r \ar[r]^-d  &
(A(\Zo)^{\otimes 2})_r  \\
\CUI(\Zo)_r \ar[u]^-{{^UI}}  \ar[r]_-{d_G} &
(\CUI(\Zo)^{\otimes 2})_r  \ar[u]_-{{^UI}^{\otimes 2}}&
\CUI(\Zo)_r \ar[u]^-{\IAtil} \ar[r]_-{d_G} &
(\CUI(\Zo)^{\otimes 2})_r \ar[u]_-{\IAtil^{\otimes 2}}
}
\]
Since the corresponding vertical arrows in the left and right squares differ by $\ep'$, it follows that the square on the right fails to commute by at most $\ep'$. This gives us the inequality of proposition \ref{ABasis}(g) up to a possible change in $\ep'$ stemming from the failure of $\IStdd$ to respect two splittings: the splitting of
\[
\widetilde E_r \subset A(\Zo)_r
\]
given by the complementary space $P_r \oplus D_r$ inside the source on the one hand, and the splitting of 
\[
\IStdd(\widetilde E_r) \subset \prod_{\pP |p} A(\Op)
\]
induced by the standard basis inside the target on the other hand.\footnote{In fact, to decrease the change in $\ep$, we could replace the standard basis of $A(\Op)$ with a basis compatible with the decomposition of the latter into extensions, primitive non-extensions, and decomposables, as we do for $A(\Zo)$. The map $\IAtil$ would then be nearly compatible with the splittings, yielding a function which is quadratic in $\ep$.} This is clearly admissible; we omit the details.

\ssegment{met_lin}{}%%%%%%%%
Returning to part (e), we must show that our modifications of $\ep$ form an algorithmically computable function which goes to zero with $\ep$. This is elementary, and fits into the general setting of a valued field $(k, |\cdot|)$ and linear map
\[
\phi: k^m \to k^n
\]
with kernel $E$. We claim that if
\[
|\phi x| < \ep
\]
then
\[
| x - E| < C\ep
\]
for some algorithmically computable constant $C$. We let $W$ denote the image of $\phi$ and $V$ the coimage, both with induced norms. For $x \in k^m$ we let $\bar x$ denote its image in $V$, and we let $\bar \phi$ denote the isomorphism
\[
V \xto{\sim} W
\]
induced by $\phi$. We fix a metric isomorphism $V = W$ arbitrarily (in practice this would be accomplished by constructing orthonormal bases of both spaces), and we let $C\inv$ be the absolute value of the smallest eigenvalue. Then for $x \in k^m$ we have
\begin{align*}
|\phi x| 
&= | \bar \phi \bar x| \\
& \ge C\inv |\bar x | \\
& = C\inv | x -E|,
\end{align*}
independently of the choice of metric isomorphism, which establishes the claim. 

\ssegment{T8}{}%%%%%%%
We turn to part (2) of the proposition: the conditional halting. If the conjectures of Zagier and Goncharov hold for the given input, then our search-space includes an open subscheme $\Zo \subset Z$, a basis $\Ee_{\le n}$ of $E_{\le n}$ consisting of single valued unipotent ($\le n$)-logarithms which are combinatorially-unramified over $\Zo$, and a linearly independent set $\Pp_{\le n}$ of unipotent iterated integrals which are combinatorially-unramified over $\Zo$ completing $\Ee_{\le n} \cup \Dd_{\le n}$ to a basis of $A(\Zo)_{\le n}$. Our claim is that if the Hasse principle holds, then for $\ep'$ sufficiently small, the boolean subalgorithm evaluated on the associated datum
\[
(\Zo, \Ee_{\le n}, \Pp_{\le n}, \ep')
\]
returns {\it True}. The map
\[
 \Re_p: 
\Qp \otimes \Ext^1_K(\QQ(0), \QQ(n))
\to
\prod_{\pP | p} \Ext^1_{\Op}(\Qp(0), \Qp(n))
\]
from the global motivic Ext group to the product of filtered $\phi$ Ext groups corresponds to the localization map of the Hasse principle through the $p$-adic regulator isomorphism of Soul\'e on the source (\cite{SouleHigher}) and through the Bloch-Kato exponential map (\cite{BlochKato}) on the target. 
So under the Hasse principle, the map
\[
 \Real_p:
A(\Zo)_n \to 
\prod_{\pP | p} A(\Op)_n
\]
(for $n\ge 2$) is injective near the extension space $E_n$. For any set of linearly independent vectors in a (finite dimensional) normed vector space, there exists an $\ep$ such that any set of $\ep$-approximations is $\ep$-linearly independent. So the claim follows. This concludes the proof Proposition \ref{ABasis}. 

\begin{rmk}
Recall that the iterated integrals through which we search to form the set $\Pp_{\le n}$ are parametrized by families of sections of $\PP^1$ over $\Zo$ (and have no particular relationship to $\thrpl$). Moreover, the Goncharov exhaustion conjecture places no bound on the height of points needed to form a basis. Thus, the search, as formulated here, is huge and unwieldy. Nevertheless, as a second attempt to convince the reader of the termination, we note that no matter how we order this countably huge set through which we search, if, as predicted by the conjectures, a successful candidate exists, it will, in due course, be met. In practice, we would probably place a height bound $B$ on points and a bound $C$ on the size of $Z \setminus \Zo$ and then, in turns, increase $B$, decrease $\ep'$, increase $C$. Beyond that, as mentioned in the introduction, pairing down the data and ordering it for an efficient search presents an interesting problem in its own right.
\end{rmk}

\segment{8a}{Change of basis algorithm}%%%%%%%%

\ssegment{8a.1}{}%%%%%%%
We now construct an algorithm which changes the basis constructed by the basis algorithm to one which is compatible with the coproduct up to possible errors of size $\ep$. This algorithm takes as input a datum $(\Zo, \widetilde \Ee_{\le n}, \Pp_{\le n}, \ep, \langle \cdot, \cdot \rangle_\ep)$ as in the output of the basis algorithm $A_\m{Basis}$, and outputs for each $n' \le n$, a square matrix of size $a_{n'} \times a_\npr$ over $\QQ$. We denote this algorithm by $A_\m{Change}$ and the resulting $n'$th matrix by
\[
A_\m{Change}(\Zo, \widetilde \Ee_{\le n}, \Pp_{\le n}, \ep, \langle \cdot, \cdot \rangle_\ep, n').
\]

\ssegment{8b}{}%%%%%%%%%
For each $n' \le n$ we fix a set
\[
\Si^o_\npr = \{\si_{-n', 1}, \dots, \si_{-n', e_\npr} \}
\]
of symbols, and define the half-weight of $\si_{-n', i}$ to be $-n'$. We may then speak about words in $\Si^o_{\ge -n}$ and about the half-weight of a word. Entries in our matrix will be indexed by pairs $(I, w)$, with
\[
I \in \widetilde \Aa_\npr = 
\widetilde \Ee_\npr  \cup \Pp_\npr \cup \widetilde \Dd_\npr
\]
($\widetilde \Dd_n$ denoting the set of monomials in $\widetilde \Bb_{<n} = \widetilde \Ee_{<n} \cup \Pp_{<n}$ of half-weight $n$ as usual), and $w$ a word in $\Si^o$ of half weight $-n'$. We construct the associated matrix entry
$
a_{I,w}
$
by recursion on the length of $w$. We write $\widetilde \Ee_\npr$ as a vector
\[
\Eetil_\npr = (\Eetil_{n', 1}, \dots, \Eetil_{n', e_\npr})
\]
(so $\Eetil_{n',i}= \Li_\npr^{U,sv}(a_{n',i})$ is a single-valued unipotent $n'$-logarithm). When $w = \si_{-n',i}$ is a one-letter word, we set
\[
a_{I,w} = 
\left\{
\begin{matrix}
1 & \mbox{if } I = \Eetil_{n',i} \\
0 & \mbox{otherwise}.
\end{matrix}
\right.
\]
Now suppose $n' = l+m$ with $l,m >0$, and let $w$ be a word of half-weight $-m$. Using the Goncharov coproduct and the inner product, we expand 
\[
d_{l,m} I = \sum c_{j,k} \, \Aatil_{l,j} \otimes \Aatil_{m,k}
\]
in the basis
\[
\Aatil_{l} \otimes \Aatil_{m} = 
\left\{ 
\Aatil_{l,j} \otimes \Aatil_{m,k}
\right\}_{j,k}
\]
of $A_l \otimes A_m$ to precision $\ep$. In terms of the $c_{j,k}$, we define
\[
a_{I, \si_{-l,i} \cdot w}
=
\sum_{j,k} c_{j,k} \cdot 
a_{\Aatil_{l,j}, \si_{-l,i}} \cdot 
a_{\Aatil_{m,k}, w}
\]
which equals
\[
\sum_k c_{i,k} a_{\Aatil_{m,k},w}
\]
if we number the basis $\Aatil_{m}$ in such a way that 
\[
\Aatil_{m,j} = \Eetil_{m,j}
\]
for $j \in [1,e_m]$.

\ssegment{8c}{}%%%%%%%
We now make the meaning of the output precise. Let $\Ee_{n,i}$ denote the projection of $\Eetil_{n,i}$ onto the space $E_n$ of extensions (so in the context of the basis algorithm, we have $\Ee_{n,i} = \Li_n^{E,sv}(a_{n,i})$). For each $n$, we let $\Bb_n = \Ee_n \cup \Pp_n$, and we let $\Dd_n$ denote the set of monomials in $\Bb_{<n}$. According to proposition \ref{0},
\[
\Aa_n := \Bb_n \cup \Dd_n
\] 
forms a basis of $A(\Zo)_n$. Let $\si_{-n,i} \in \Uu(\Zo)_{-n}$ denote the element dual to $\Ee_{n,i}$ relative to this basis. With this interpretation of the set $\Si^o$ of symbols $\si_{-n,i}$, according to proposition \ref{.5}, $\Si^o$ becomes a set of free generators of the free prounipotent group $U(\Zo)$. For $w \in \Uu(\Zo)_{-n}$ a word in $\Si^o$ of half-weight $-n$, let $f_w \in A(\Zo)_n$ denote the corresponding function.

Let $(b_{w,I})_{w,I}$ denote the inverse of the matrix constructed in the algorithm. For $w$ a word of half-weight $-n$, define $\ftil_w \in A(\Zo)_n$ by
\[
\ftil_w = \sum_{I \in \Aatil_n} b_{w,I} I.
\]

\Proposition{8d}{
In the situation and the notation above, we have 
\[
| f_w - \ftil_w | < \ep.
\]
}

\begin{proof}
This is equivalent (up to an admissible change in $\ep$) to the estimate
\[
| a_{I,w} - \langle w,I \rangle) | < \ep.
\]
Our algorithm is based on the following two properties of the numbers $\langle w, I \rangle$ for $w$ a word in our set of abstract generators $\Si$, and $I$ an element of our concrete basis $\Aatil_n$ :
\begin{enumerate}
\item we have
\[
\left|
\langle \si_{n,i} ,I \rangle -
\left\{
\begin{matrix}
1 & \mbox{if } I = \Eetil_{n,i} \\
0 & \mbox{otherwise}
\end{matrix}
\right\}
\right|
< \ep,
\]
and
\item
we have
\[
\left|
\langle \si_{n,i}\cdot w, I \rangle
-
\langle \si_{n,i} \otimes w, dI \rangle
\right|
< \ep.
\]
\end{enumerate} 
The proposition follows.
\end{proof}

%%%%%%%%%%%%%%%%%%%%%%%%%%%
\section{Construction of geometric algorithms}%%%%%
%%%%%%%%%%%%%%%%%%%%%%%%%%%

\segment{9a}{Cocycle-evaluation-image algorithm}%%%%%
Fix finite sets $\Si_{-1}$, $\Si_{-2}$, $\Si_{-3}, \dots$, $\Si_{-n}$ and $\Si^\circ_{-1}$, $\Si^\circ_{-2}$, $\Si^\circ_{-3}, \dots$, $\Si^\circ_{-n}$ with 
\[
\Si_{-1} \subset \Si^\circ_{-1}
\]
and $\Si^\circ_{i} = \Si_{i}$ for $i \le -2$.
Set
\[
\Si = \bigcup \Si_i,
\qquad
\Si^\circ = \bigcup \Si^\circ_i.
\]
Let $\nN(\Si)$, $\nN(\Si^\circ)$ denote the free graded pronilpotent Lie algebras on generators $\Si$, $\Si^\circ$. As usual, we refer to the grading as the \emph{half-weight}. Let $\nN^\PL$ denote the polylogarithmic Lie algebra over $\QQ$,
\[
\nN^\PL = \QQ(1) \ltimes \prod_{i=1}^\infty \QQ(i)
\]
with $\QQ(i)$ in half-weight $-i$. We write $\Hom_{\LLie}^\Gm$ for homogeneous Lie-algebra homomorphisms of graded degree $0$.
Let $\phi$ denote the natural quotient map
\[
\phi: \nN(\Si^\circ) \surj \nN(\Si)
\]
and let $\ev$ denote the map
\[
\Hom_{\LLie}^\Gm(\nN(\Si), \nN^\PL) \times \nN(\Si^\circ)_{\ge -n}
\to
\nN^\PL_{\ge -n} \times \nN(\Si^\circ)_{\ge -n}
\]
given by
\[
\ev(\Cc, F) = (\Cc(\phi(F)), F).
\]
Then $\ev$ is in an obvious sense a map of finite dimensional affine spaces, and it is straightforward to construct an algorithm which computes its scheme-theoretic image. (At the very least, this would be a standard application of elimination theory, but in fact, it should be possible to obtain a closed formula.) We omit the details. We refer to this as the \emph{cocycle-evaluation-image} algorithm. We denote it by $\Aa_\m{Eval}$, and its output, a finite list of elements of
\[
S^\bullet ( \nN^\PL_{\ge -n} \times \nN(\Si^\circ)_{\ge -n})^\lor,
\]
by $\Aa_\m{Eval}(\Si, \Si^\circ,n)$.

\segment{9b}{Chabauty-Kim-loci algorithm}%%%%%%%%

\ssegment{9b'}{}%%%%%
We now construct the Chabauty-Kim-loci algorithm discussed in the introduction. As input it takes an open integer scheme $Z$, a prime $p$ of $\ZZ$, a natural number $n$, and an $\ep \in p^\ZZ$. As output it returns a finite family\[
\widetilde \Bb = \widetilde \Ee \cup \Pp
\]
of unipotent iterated integrals, and a finite family $\{\widetilde F_i\}_i$ of elements of the polynomial ring
\[
\QQ[\widetilde \Bb, \log, \Li_1, \Li_2, \dots, \Li_n],
\]
which we denote by $\Aa_\m{Loci}(Z, p, n, \ep)$.

\ssegment{9c}{}%%%%%%%
We run $A_\m{Basis}(Z,p,n,\ep)$. This gives us our set $\widetilde \Bb = \widetilde \Bb_{\le n}$ of unipotent iterated integrals. We run $\Aa_\m{Change}$ on $A_\m{Basis}(Z,p,n,\ep)$ to obtain a matrix 
\[
M_{\le n} = \bigoplus_{i=0}^n M_i.
\]

\ssegment{9d}{}%%%%%%%
We let $\Si_{-1}$ denote a set of size $e_1=\dim \Oo_Z^* \otimes \QQ$, $\Si_{-1}^o$ a set containing $\Si_{-1}$ of size $e_1^o=\dim \Oo_{\Zo}^*\otimes \QQ$, and for each $i \in [2,n]$, $\Si_{-i} = \Si^o_{-i}$ a set of size
\[
e_i = \dim \Ext^1_K( \QQ(0), \QQ(i)).
\]
We run $\Aa_\m{Eval}(\Si, \Si^o, n)$ to obtain a finite family $\{F^\m{abs}_i\}_i$ of elements of $S^\bullet(\nN^\PL_{\ge -n} \times \nN(\Si^o)_{\ge-n})^\lor$.

\ssegment{9e}{}%%%%%%%%
We pull back along the quotient map
\[
\nN(\Si^o) \surj \nN(\Si^o)_{\ge -n}.
\]
We pull back further along the logarithm
\[
U(\Si^o) \to \nN(\Si^o)
\]
Denoting the natural coordinates on $\nN^\PL$ by $\log, \Li_1, \Li_2, \Li_3, \dots$, we obtain a finite family of elements of 
\[
S^\bullet\nN^\PL_{\ge -n} \otimes A(\Si^o) = 
A(\Si^o)[\log, \Li_1, \dots, \Li_n]
\]
which are contained in degrees $\le n$.

\ssegment{9f}{}%%%%%%%%%
The matrix $M_{\le n}$ defines a linear bijection
\[
A(\Si^o)_{\le n} \xto{\sim} \QQ[\widetilde \Bb]_{\le n}
\]
which we use to obtain the hoped-for family $\{ \widetilde F_i\}_i$. This completes the construction of the algorithm.

\ssegment{10a}{Proof of Theorem \ref{MainTh}}%%%%%% 
We have a sequence of maps
\begin{align*}
U(X)^\PL_{\ge -n, \Qp}
 \to
&  \nN(X)^\PL_{\ge -n} \times \Spec \Qp
\\
 & \to \nN(X)^\PL_{\ge -n} \times \nN(\Zo)
\surj \nN(X)^\PL_{\ge -n} \times \nN(\Zo)_{\ge -n}
\end{align*}
and an associated sequence of Cartesian squares:
\[
\xymatrix{
\Hom^\Gm_{\Lie}(\nN(Z), \nN(X)^\PL_{\ge -n})
 \times \nN(\Zo)_{\ge -n}
 \ar[r]^-{\overline \ev_n}
&
\nN(X)^\PL_{\ge -n} \times \nN(\Zo)_{\ge -n}
\\
\Hom^\Gm_{\Lie}(\nN(Z), \nN(X)^\PL_{\ge -n} )
 \times \nN(\Zo)
\ar[u] \ar[r]^-{\ev_n}
&
\nN(X)^\PL_{\ge -n} \times \nN(\Zo)
\ar[u] 
\\
\Hom^\Gm_{\Lie}(\nN(Z), \nN(X)^\PL_{\ge -n}) \times \Spec \Qp
\ar[r]^-{\ev_n(\nu_{\pP})} \ar[u]
&
\nN(X)^\PL_{\ge -n} \times \Spec \Qp
\ar[u]
\\
\Hom^\Gm_{\m{Groups}} (U(Z), U(X)^\PL_{\ge -n})_\Qp
\ar[r]^-{ev_n(u_{\pP})} \ar[u]
&
U(X)^\PL_{\ge -n, \Qp}.
\ar[u] \ar@/_7ex/@<-5ex>[uuu]_\Gamma
}
\]
We write $\Im$ for scheme-theoretic image.
 By the mere commutativity of the outer square we have a containment of closed subschemes 
\[
\Im ev_n(u_\pP) \subset \Gamma\inv \Im \overline \ev_n.
\]
In view of Propositions \ref{ABasis} and \ref{8d}, this establishes part 1 of the theorem (correctness of the output upon halting).

For the conditional halting, we contend that formation of scheme-theoretic image along each horizontal map is compatible with pullback along each vertical map. We start at the top. For $\nu \in \nN(\Zo)$ mapping to $\overline \nu \in \nN(Z)$ and
\[
\phi: \nN(Z) \to \nN(X)^\PL_{\ge -n}
\]
a $\Gm$-equivariant homomorphism, $\phi(\overline \nu)$ depends only on the image of $\nu$ in $\nN(\Zo)_{\ge -n}$. So
\[
\Im \ev_n = 
 \left( \Im \overline \ev_n \right) \times \nN(\Zo)_{<-n}.
\]
We go on to the middle square. By the period conjecture, $A(\Zo) \to \Qp$ is flat. (In general, if $A$ is integral, $K$ a field, and $\psi:A \to K$ is injective, then $\psi$ is flat, since it factors as a localization map followed by a field extension.) Hence formation of the scheme-image is compatible with pullback. Turning to the bottom square, the vertical maps are iso, so this is clear. This completes the proof of Theorem \ref{MainTh}.

%%%%%%%%%%%%%%%%%%%%%%%%%%
\section{Construction of analytic algorithm}%%%%%%
\label{Newt}%%%%%%%%%%%%%%%%%%%%%
%%%%%%%%%%%%%%%%%%%%%%%%%

\segment{17b}{}%%%%%
We now construct an algorithm for deciding whether the number of zeroes of a $p$-adic power series in a given ball is zero or one, given a sufficiently close approximation. This is quite standard, but we are unaware of a reference for this exact problem.

More precisely, we construct a boolean-valued algorithm which takes as input a boolean $b \in \{0,1\}$, natural numbers $N$, $r$ and $h$, and a polynomial with rational coefficients
\[
\widetilde F = \sum_{i=0}^N \atil_i T^i
\]
which has at most $b$ ($\Qp$-rational) roots inside the disk of radius $p^{-r}$. We call this algorithm the \emph{root criterion algorithm} and denote the output by $\Aa_\m{RC}(b,N,r,h,\Ftil)$. We first announce the meaning of the output as a remark. 

\disp{Remark}{17c}{
In our application below,
\[
F = \sum_{i=1}^\infty a_iT^i
\]
will be a power-series expansion of a polynomial in logarithms and polylogarithms of half-weight $h$ over $\Qp$, and $\Ftil$ will be an approximation of $F$ with arithmetic precision $p^{-r}$ and geometric precision $e^{-N}$. Suppose $\Aa_\m{RC}(b,N,r,h,\Ftil) =$ {\it True} . Then $F$ has at most $b$ roots within the disk of radius $p^{-r}$. This amounts to an elementary use of Newton polygons, together with the growth estimate
\[
v(a_k) \ge -h\log_p(k) 
\]
which follows from proposition 6.7 of Besser--de Jeu \cite{Lip}.
}

\segment{17d}{Case 1: b=0}%%%%%%%%%
\begin{enumerate}
\item
If $\atil_0 =0$, return {\it False}.
\item
Compute real solutions to
\[
v(\atil_0) -rt = -h\log_p t
\]
to within $0.5$. If there are none, return {\it True}. 
\item
Otherwise, there are two solutions $t_L < t_R$. If $t_R >N$, return {\it False}.
\item
Check the condition
\[
v(\atil_k) > v(\atil_0) -rk
\]
for $1 \le k \le t_R$. If the condition holds, return {\it True}. Otherwise return {\it False}.
\end{enumerate}

\segment{17e}{Case 2.1: $b=1$, $\atil_0 = 0$ }%%%%%%%
\begin{enumerate}
\item
If $\atil_1=0$, return {\it False}.
\item
Compute solutions to 
\[
-r(t-1)+v(\atil_1) = -h\log_p t
\]
to within $0.5$. If there are none, return {\it True}.
\item
Otherwise, there are two solutions $t_L < t_R$. If $t_R > N$, return {\it False}.
\item
Check the condition
\[
v(\atil_k) > -r(k-1) +v(\atil_1)
\]
for $2 \le k \le t_R $. If this condition holds, return {\it True}. Otherwise, return {\it False}.
\end{enumerate}

\segment{17f}{Case 2.2: $b=1$, $\atil_0 \neq 0$}%%%%%%
In this case, we have 
\[
\tag{$*$}
v(\atil_1) = v(\atil_0) -r 
\]
and
\[
\tag{$**$}
v(\atil_0) -rt = -h \log_p t
\]
has two solutions $t_L < t_R$.
\begin{enumerate}
\item
If $t_R > N$, return {\it False}. 
\item
Check the condition
\[
v(\atil_i) > v(\atil_0) -ri
\]
for $2 \le i \le t_R$. If this condition holds, return {\it True}, otherwise return {\it False}.
\end{enumerate}

%%%%%%%%%%%%%%%%%%
\section{Numerical approximation}%%
\label{Num}%%%%%%
%%%%%%%%%%%%%%%%

%We construct an algorithm which computes p-adic multiple polylogarithms and p-adic multiple zeta values. The results of this section were obtained in collaboration with Andre Chatzistamatiou. 

We present the results of the unpublished work \cite{PItInts} concerning computation of $p$-adic iterated integrals on the projective line. For background we refer the reader to Furusho \cite{FurushoI, FurushoII} and to Chatzistamatiou \cite{ChatInt}. The results of this section should be compared with prior and with concurrent works by Besser -- de Jeu \cite{Lip}, by Jarossay \cite{JarssayExplicitI}, and by \"Unver \cite{UnverCyclotomic}.

\segment{171}{}%%%%%%
Let $K$ denote a finite extension of $\Qp$. A general p-adic iterated integral on the line with arbitrary punctures with good reduction is a multiple polylogarithm up to sign: if the points $a_1, \dots, a_m$ of $\Oo_K^*$ lie in distinct residue disks, then we have
\begin{align*}
\int_{1_0}^{a_{m+1}}
 \left(
 \frac{dt}{t}
 \right)
 ^{n_m-1}
 &
\frac{dt}{t-a_m} \cdots
\left(\frac{dt}{t}\right)^{n_1-1}
\frac{dt}{t-a_1} 
\\
&=
(-1)^m \Li_{n_1, \dots, n_m} 
\left( \frac{a_2}{a_1}, \frac{a_3}{a_2}, \dots, \frac{a_{m+1}}{a_m} \right)
\end{align*}
(see theorem 2.2 of Goncharov \cite{GonMPMTM}).
Here $1_0$ denotes the tangent vector $1$ at $0$. We will restrict attention to the case $m=1$, $a_1 = 1$, i.e. to iterated integrals on $\thrpl$; in terms of multiple polylogarithms, this means restricting attention to multiple polylogarithms of one variable. We do this merely for concreteness: the difficulty in going from this case to the general case is largely a notational one.

\segment{172}{}%%%%%
We work with the rigid analytic space equal to the closed unit disk minus the residue disk about $1$:
\[
U = \Spm K\{t,u\} / \big( u(t-1) -1 \big),
\]
with log structure induced by the divisor $\{0\}$.\footnote{For us, this merely means that we will be working with differential forms with log poles at the origin; we will not make use of log geometry.} The coordinate ring $\Oo(U)$ has a map 
\[
\rho_0: \Oo(U) \to B:=K\{t\}
\]
as well as a map
\[
\rho_1: \Oo(U) \to A:= K\{w,u\}/(wu-1)
\]
sending
\[
t \mapsto w+1
\]
(re-centering the unit circle). Both $\rho_0$ and $\rho_1$ are injective, and on a practical level, all computations that occur within the algorithm may in fact be carried out inside the rings $A$ and $B$ up to given arithmetic and geometric precision. There will be no need to verify convergence (or overconvergence) algorithmically. As usual, we do not keep track of the error incurred. 

\segment{180}{Remark}%%%%%
There are several possible treatments of the singular points $0, 1, \infty$. In one approach, we puncture over the residue field $k$ (which corresponds, via Berthelot's theory of rigid analytic tubes, to removing residue disks over $\Spm K$). In another approach, we include the points $0,1, \infty$, allowing log poles instead of puncturing; in Berthelot's construction of isocrystals, this does not require the use of log geometry. This approach is convenient for dealing with tangential base-points. (There is also the possibility of working with the rigid analytic thrice punctured line $\PP^1_{\Spm K}\setminus\{0,1,\infty\}$, needed when considering points of bad reduction.) The resulting categories of unipotent isocrystals on the special fiber or of unipotent connections on any of the above rigid analytic spaces are all canonically equivalent. Our use of the rigid analytic space $U$ means that we choose to mix the first two approaches. 

\segment{181}{}%%%%%
We give a brief sketch of the definition of the p-adic multiple polylogarithms we wish to evaluate. We refer to Besser \cite{BesserHeidelberg, Besser} and to Chatzistamatiou \cite{ChatInt} for more thorough accounts.

We let $\unVIC(U)$ denote the category of unipotent vector bundles with integrable connection with log poles at $0$. Its equivalence with, say, the category of unipotent isocrystals on $\PP^1_k\log 0\setminus\{1,\infty\}$ endows it with a Frobenius pullback functor 
\[
F^*: \unVIC(U) \to \unVIC(U).
\]
If $\om$, $\om'$ are fiber functors \emph{compatible with Frobenius}, then $F^*$ acts on the Tannakian path torsor
\[
_{\om'}P_\om = \Isom^\otimes(\om, \om').
\]
The exact notion of compatibility is a bit subtle, and we refer the reader to Chatzistamatiou \cite{ChatInt} for a detailed discussion. The result of this detailed discussion however, is that in our setting, a naive approach will be sufficient.

We consider the $K$-rational fiber functors
\[
\unVIC(U) \xyto{\om_{1_0}}{\om_x} \Vect K
\]
associated to the tangent vector $1_0$ and to $x$ respectively. According to Besser \cite{BesserHeidelberg}, there's a unique Frobenius fixed point $p^\cris \in ({_xP_{1_0}})(K)$. 

We let $\Etil$ denote the KZ-connection over $U$: $\Etil = \Oo_U \langle \langle e_0, e_1 \rangle \rangle$, with connection given by
\[
\nabla (W) = e_0W \frac{dt}{t} + e_1 W \frac{dt}{1-t}
\]
for any word $W$ in $e_0, e_1$. Thus, if
\[
\Ll = \sum_W \Ll_W W
\]
is an arbitrary section, we have
\[
\nabla(W) = d\Ll + e_0 \Ll \om_0 + e_1 \Ll \om_1
\]
where 
\begin{align*}
\om_0 := \frac{dt}{t}, &&
\om_1 &:= \frac{dt}{t-1},
\end{align*}
and
\[
d\Ll = \sum_W \Ll'_W Wdt
\]
is given by differentiating the coefficient functions.

 Each $K$-rational fiber of $\Etil$ is canonically equal to
\[
\Uu: = K \langle \langle e_0, e_1 \rangle \rangle;
\]
let $\la_W$ denote the linear functional 
\[
\Uu \to K
\]
associated to the word $W$. The Tannakian path $p^\cris$ gives rise to a linear map
\[
p^\cris_{\Etil}: \Etil(1_0) \to \Etil(x).
\]
In terms of the path $p^\cris$ and the linear functional $\la_W$, we define the p-adic multiple polylogarithms we wish to evaluate by 
\[
\Li_W(x) = \la_W \big( p^\cris_{\Etil}(1) \big).
\]

\segment{173}{}%%
Our algorithm depends on a notion of \emph{residue over residue disk}. In turn, this depends on an elementary lemma:

\begin{lemma*}
Suppose we have a congruence relation 
\begin{align*}
\sum_{i,j \ge 0} a_{i,j} w^iu^j \equiv 
\sum_{i,j \ge 0} b_{i,j} w^iu^j 
\mod (wu - 1)  
\end{align*}
in the restricted formal power series ring $K\{w,u\}$. Then we have an equality of convergent sums
\[
\sum_{j=0}^\infty a_{j+1,j} = 
\sum_{j=0}^\infty b_{j+1,j}
\]
in the nonarchimedean field $K$.
\end{lemma*}

\begin{proof}
We provide the details of this elementary verification. In a nonarchimedean field, a sum whose terms tend to zero converges, so the convergence is immediate. For the equality, suppose the congruence relation is witnessed by the equation
\[
\sum_{i,j \ge 0} (a_{i,j} - b_{i,j}) = 
(wu-1)\sum_{i,j \ge 0} c_{i,j}w^iu^j.
\] 
We then have for each $n, k \in \NN$, 
\[
c_{n+k+1, k+1} = 
\sum_{r=1}^k a_{n+r, r} - \sum_{r = 1}^k b_{n+r,r}.
\]
Restricting attention to $n=1$ and taking the limit as $k \to \infty$, we obtain the result. 
\end{proof}

Returning to our definition of the residue, we consider the space
\[
\Om^1(U) = \Oo(U) \frac{dt}{t}
\]
of rigid analytic 1-forms with log poles at the origin. The pullback of an arbitrary element 
\[
\om = f(t,u) \frac{dt}{t}
\] 
along the map $\rho_1$ defined in segment \ref{172} is given by
\[
\rho^*_1\om =  (w+1)\inv f(w+1, w\inv) dw.
\]
We define
\[
\Res_{D(1)} \om := \Res_{w < 1}(w+1)\inv f(w+1, w\inv),
\]
where
\[
\Res_{w<1} : K\{w,u\}/(wu-1) \to K
\]
is defined by
\[
\Res_{w<1} \left(
\sum_{i,j} a_{i,j}w^iu^j 
\right) := \sum_{j=0}^\infty a_{j+1,j}.
\]

\segment{174}{}%%%%%%
We will construct elements $\tau_1 \in \Uu$ and $L \in \Etil(U)$ recursively. To do so, we set 
\begin{align*}
\om_1' &:= \frac{pt^{p-1} dt}{t^p -1}.
\end{align*}
We denote the length of a word $W$ by $|W|$. We also let $|W|_i$ denote the number of occurrences of the letter $e_i$. We set 
\[
\tag{$\sharp$}
\tau_1 := pe_1 +
 \underset{|W|_1 \ge 1}{\sum_{|W| \ge 2}}
  \tau_W W
\]
and
\[
\tag{$\natural$}
L = 1 + \sum_{|W| \ge 1} L_W W.
\]
We require that $L$, which will be determined by a system of differential equations, satisfy the initial condition 
\[
\tag{$\flat$}
L_W(0) = 0
\]
for all nonempty words $W$. We construct the functions $L_W$ as elements of the ring $A$. There, we set $t := w+1$. We may transport the 1-forms $\om_0$, $\om_1$, $\om_1'$ to $A$ in the obvious way. We may also formally differentiate as well as take residues about the open disk $|w|<1$, which we continue to denote by $\Res_{D(1)}$. A 1-form $\om$ in the free rank-one module $Adw$ has a primitive in $A$ if and only if $\Res_{D(1)} \om = 0$. With these notations, and this last fact in mind, we define $\tau_W$ in terms of lower terms by 
\[
\tag{*}
\tau_W = 
-\Res_{D(1)} \left(
p(L_{W/e_0} - L_{e_0 \backslash W}) \om_0
+
\underset{ W', W'' \neq \emptyset }{\sum_{W = W'W''}}
L_{W'} \tau_{W''} \om_1 - L_{e_1 \backslash W} \om_1'
\right).
\]
Notationally, the term with the left-division $e_1 \backslash W$ is equal to $0$ if $W$ is not left-divisible by $e_1$. We define $L_W$ in terms of lower terms by the differential equation
\[
\tag{$\star$}
dL_W = p(L_{W/e_0} - L_{e_0 \backslash W}) \om_0
+
\underset{  W'' \neq \emptyset }{\sum_{W = W'W''}}
L_{W'} \tau_{W''} \om_1 - L_{e_1 \backslash W} \om_1',
\]
the equation being solvable over $A$ since equation (*) above guarantees that the right hand side has no residue. Again, terms with an impossible left or right division are to be interpreted as zero.

\segment{174.5}{}%%%%%
We now use the elements $\tau_W \in K$ to construct more elements $\tau^{V}_W \in K$ indexed by pairs of words $V$, $W$ with 
\[
V \subset W
\]
by which we mean that $V$ occurs as an ordered subsequence of (not necessarily contiguous) letters. To do so, we let $\tau$ be the endomorphism of $\Uu$ determined by 
\[
\tag{$\sharp$}
\tau(e_0) = pe_0
\]
and
\[
\tag{$\natural$}
\tau(e_1) = \tau_1.
\]
We then define $\tau^{V}_W$ by
\[
\tag{$\flat$}
\tau(V) = \sum_{W \supset V} \tau^{V}_W W.
\]

\segment{175}{}%%%%%%
We construct certain rational numbers $c_{i,j,W}$ ($i,j \in \NN$, $W$ a word in $e_0, e_1$) which arise from the KZ-connection. The $k$th power of the covariant derivative applied to a word $W'$:
\[
\left( \frac{\nabla^k_{\partial/\partial t} W'}{k!} \right)
\]
is of the form
\[
\underset{|W|_1 \le j}
{\underset{|W|_0 \le i}
{\sum_{i+j=k}}} 
\frac{c_{i,j,W}}{t^i(t-1)^j} WW'.
\]
To compute the coefficients $c_{i,j,W}$ algorithmically, we simply apply
\begin{align*}
\frac{\nabla_{\partial/\partial t}}{i+j+1}
&
\left( \frac{1}{t^i(t-1)^j}W' \right)
= 
\frac{1}{i+j+1} \left(
\frac{-i}{t^{i+1}(t-1)^j} I 
\right.
\\
&+ 
\left.
\frac{-j}{t^{i}(t-1)^{j+1}} I 
 + \frac{1}{t^{i+1}(t-1)^j}e_0 +\frac{0}{t^{i}(t-1)^{j+1}}e_1 
\right) W'
\end{align*}
iteratively and collect terms. 

\segment{176}{}%%%%%
All ingredients above are independent of the endpoint. We now fix the point $x \in \thrpl(\Oo_K)$ at which we wish to evaluate. In terms of the rational coefficients constructed in segment \ref{175}, and in terms of a lift $\si$ of Frobenius to $K$, we define for each word $W$, an element $\ep_W(x) \in K$ by the infinite sum
\[
\ep_W(x)= 
 \sum_{\set{i,j \in \NN}{ i \ge |W|_0, \; j \ge |W|_1} } 
\frac
{c_{i,j,W}(x^p-x)^{i+j}}
{(x^\si)^i(x^\si-1)^j}
. 
\]
We will discuss its convergence below.

\Theorem{178}{
(Joint with Andre Chatzistamatiou.)
Let $T$ be a word in $\{e_0, e_1\}$, let $p$ be a prime, let $K$ be a finite extension of $\Qp$ and let $x \in \thrpl(\Oo_K)$. Then the p-adic multiple polylogarithm $\Li_T^p(x)$ is given in terms of the values $L_W(x)$, in terms of the constants $\tau^V_W$, in terms of the values $\ep_W(x)$, and in terms of multiple polylogarithms of lower weight, by
\[
\Li_T^p(x) 
=
(-1)^{|T|_0}
(1-p^{|T|})\inv 
\underset{V \neq T}
{
\underset{W \supset V}
{\sum_{T=UW'W}}
}
(-1)^{|V|_0}
\epsilon_U(x) L_{W'}(x) \tau^V_W \Li^p_V(x).
\]
(In particular, the terms $\ep_U(x)$ and $L_{W'}(x)$ appearing on the right converge.)
}

\segment{179}{Proof of Theorem \ref{178}}%%%%%

\ssegment{182}{}%%%%%%
Let $\phi$ denote the Frobenius lift 
\[
U \to U
\]
over $\si$ given by $\phi(t) = t^p$ and let $\phi'$ be a lift of Frobenius which fixes the endpoint $x$. For instance, when $K = \Qp$ we may set $\phi'(t) = (t-x)^p+x$. (Over)convergence gives rise to a canonical isomorphism 
\[
\ep: \phi^* \Etil \xto{\sim} \phi'^* \Etil.
\]
We recall that the pullback along the semilinear Frobenius lift $\phi$ is accomplished in two steps as a base-change along $\si$ followed by a $K$-linear pullback, as in the familiar diagram
\[
\xymatrix{
U \ar[r]^{\phi/K} \ar[dr] &
U^\si \ar[d] \ar[r]^\si  &
U \ar[d] \\
&
\Spm K \ar[r]_\si &
\Spm K
}
\]
and similarly for $\phi'$. In concrete terms, Frobenius-invariance of $p^\cris$ means that the square
\[
\xymatrix{
(\phi^*\Etil)(\oneato)
 \ar[r]^{p^\cris_{\phi^*\Etil}}
  \ar@{=}[d]
   &
(\phi^*\Etil)(x) \ar[d]^{\ep(x)} \ar[d]_\sim 
\\
\si^*\Etil(\oneato) \ar[r]_{\si^*(p^\cris_{\Etil})}
	 &
\si^*\Etil(x)
}
\]
commutes.

\ssegment{183}{}%%%%%
Together with the choice of unit vector $1 \in \Etil(1_0)$, the KZ-connection corepresents the fiber functor: for any unipotent connection $E$, we have 
\[
\Hom(\Etil, E) = E(1_0).
\]
This is proved in a surprising way (using complex iterated integrals) by Kim in \cite{kimii}, and in a more straightforward way (via a certain iterative construction of universal extensions which has appeared in various places in the literature) for instance by Chatzistamatiou \cite{ChatInt}. This gives us a canonical identification between the fiber $\widetilde E(\oneato)$ and the completed universal enveloping algebra of the unipotent fundamental group. It also means that there's a unique overconvergent horizontal morphism 
\[
\theta: \Etil \to \phi^*\Etil
\]
such that
\[
\theta(1_0): \Etil(1_0) \to (\phi^*\Etil)(1_0)
 = \si^* \big( \Etil(1_0) \big)
\]
sends $1 \mapsto \si^*1$. 

\ssegment{184}{}%%%%%%
Let $\Li(x)$ denote the noncommutative formal power series 
\[
\Li(x) = \sum_W \Li_W(x)W.
\]
Placed side by side, $\theta$, $\ep$, and $p^\cris$ form a commutative diagram like so.
\[
\xymatrix{
%1,1
\Etil(\oneato)
	\ar[r]^{p^\cris_{\Etil}} \ar[d]_{\theta(\oneato)} 
	\POS p+(-8,6)*+{1}="ul" 
&
%1,2
\Etil(x) 
	\ar[d]^{\theta(x)}
	\POS p+(8, 6)*+{\Li(x)}="ur"  
\\
%2,1
(\phi^*\Etil)(\oneato) \ar[r]^{p^\cris_{\phi^*\Etil}}
 \ar@{=}[d]  
&
%2,2
(\phi^*\Etil)(x) \ar[d]^{\ep(x)} \ar[d]_\sim 
\\
%3,1
\si^*\Etil(\oneato) \ar[r]_{\si^*(p^\cris_{\Etil})}
	\POS p+(-8,-6)*+{\si^*1}="ll" 
&
%3,2
\si^*\Etil(x) \POS p+(8,-6)*+{\si^*\Li(x)}="lr"
\ar@{|->}@/^5pt/"ul";"ur"
\ar@{|->}@/_10pt/"ul";"ll"
\ar@{|->}@/_5pt/"ll";"lr"
}
\]
We will see that $\Li(x)$ is uniquely determined by the equation
\[
\si^*\Li(x) = \ep(x) \theta(x) (\Li(x)).
\]

\ssegment{191}{}%%%%
It follows from the analysis of tangential fiber functors in Chatzistamatiou \cite[\S3.5]{ChatInt} that there's a canonical isomorphism from the fiber functor $\om_{1_0}$ to the functor
\[
\om_0: E \mapsto E(0)
\]
which is compatible with the action of our chosen Frobenius lift $\phi$. We claim that the fiber $\theta(0)$ of $\theta$ at $0$, regarded as a map 
\[
\Uu \to \si^*\Uu
\]
is equal to $\tau$, and that the value $\theta(U)(1)$ of $\theta$ at the identity element $1$ of $\Etil(U)$ is equal to $L$. These facts follow from two key properties of $\theta$, which in turn follow from a certain functorial characterization of $\theta$. If
\[
f: T \to U
\]
is a rigid analytic space over $U$, and $E$ is a quasi-coherent sheaf over $U$, we set 
\[
E(T):= \Gamma(T, f^*E). 
\]
We let $\Vect T$ denote the category of vector sheaves. We let $\om_T$ denote the functor
\[
\Vect T \from \unVIC(U \log 0)
\]
induced by $f$ (forget the connection and pull back). We let $\om_{0_T}$ denote the composite
\[
\Vect T \from \Vect K \xfrom{\om_0} \unVIC(U). 
\]
We let $\phi_*\om_T$ denote the composite
\[
\Vect T \xfrom{\om_T} 
\unVIC(U) \xfrom{\phi^*}
\unVIC(U),
\]
and similarly for $\om_{0_T}$. Then we have canonical isomorphisms 
\[
\Etil(T) = \Hom(\om_{0_T}, \om_T),
\]
and
\[
(\phi^*\Etil)(T) = \Hom(\phi_*\om_{0_T}, \phi_* \om_T),
\]
which are natural in $T$. Moreover, translated through these isomorphisms, $\theta$ sends a 2-morphism
\[
\om_{0_T} \to \om_T
\]
to its composite with the 1-morphism $\phi^*$. It follows, on the one hand, that the fiber $\theta(0)$ is the $\si$-linear ring homomorphism induced by the action of Frobenius on the unipotent fundamental group, and on the other hand, that  the map of global sections $\theta(U)$ is equivariant for the right-action of $\Etil(0)$ on $\Etil(U)$ and for the right action of $\phi^*\Etil(0)$ on $\phi^*\Etil(U)$ through $\theta(0)$. This last property means that for any $g \in \Uu$, we have 
\[
\tag{Equivariance}
\theta(U)(g) = \theta(U)(I) \cdot \theta(0)(g).
\]
Thus, $\theta$ is completely determined by two small pieces: a \textit{horizontal} piece $\Ll:= \theta(U)(1)$, and a \textit{vertical} piece $\Tt: = \theta(0)$. Moreover, $\Tt$ is determined by its action on the generators $e_0$ and $e_1$. Finally,
\[
\Tt(e_0) = pe_0
\]
and $\Tt_1:= \Tt(e_1)$ has constant term $0$ and linear term $pe_1$.

\ssegment{192}{}%%%%%
We will now show that $L = \Ll$ and $\tau = \Tt$. We first obtain the initial condition for $\Ll$:
\[
\Ll(0) = \theta(U)(1)(0) = \theta(0)(1(0)) = 1. 
\]
Here `$1$' denotes the empty word, regarded first as a section of $\Etil$ over $U$, and then as a section of $\phi^*\Etil$ over $U$. Its \emph{value} $1(0)$ is the unit element of the fiber $\Etil(0)$, which gets sent to the unit element of $(\phi^*\Etil)(0)$ since $\phi(0)$ is a homomorphism. 

By the horizontality of $\theta$, we have the equation 
\[
\tag{$\sharp$}
\nabla' (\theta(U)(1)) = \theta(U)(\nabla 1)
\]
in the $\Oo(U)$-module 
\[
\Gamma
\big(U, (\phi^* \widetilde E) \otimes \Om^1_U \big)
=
(\phi^* \widetilde E)(U) \otimes \Om^1(U)
= 
(\phi^* \widetilde E)(U) \frac{dt}{t}.
\]
Here, $\nabla'$ denotes the pullback of $\nabla$ along the Frobenius lift $\phi$. By the equivariance property, this equation may be rewritten in terms of $\Ll$ and $\Tt$ as follows. We denote the free generators $\phi^*e_i$ of $\phi^* \widetilde E$ simply by $e_i$. We let $\om_i'$ denote the pullback of $\om_i$ by the Frobenius lift $\phi$. With this notation, we have:
\begin{align*}
\tag{$\natural$}
d\Ll + \sum_{i=0,1} e_i \Ll \om_i'
	&= \nabla \Ll \\
	&= \theta(U)(\nabla(1)) \\
	&= \theta(U) \left( \sum_{i=0,1} e_i \om_i \right) \\
	&= \sum_{i=0,1} \theta(U)(e_i) \om_i \\
	&\overset{(\mbox{equivariance})}{=}
	 \sum_{i=0,1} \Ll  \cdot \Tt(e_i(0)) \om_i.
\end{align*}
Plugging in $pe_0$ for $\Tt(e_0)$ and 
\[
\Tt_1 = pe_1 + \sum_{|W| \ge 2} \Tt_W W
\]
for $\Tt(e_1)$, we obtain 
\[
\tag{$\flat$}
d \Ll + p e_0 \Ll \om_0 + e_1 \Ll \om_1'
=
p \Ll e_0\om_0 + \Ll \Tt_1 \om_1.
\]
Modulo the augmentation ideal $I$, equation ($\flat$) becomes 
\[
d \Ll_\emptyset = 0.
\]
Hence, from the initial condition, we find that
\[
\Ll_\emptyset =1.
\]
Thus $\Ll$ has the form
\[
\Ll = 1 + \sum_{|W| \ge 1} \Ll_W W.
\]
Projecting equation ($\flat$) onto the $W$-coordinate for an arbitrary word $W$, we find that the functions $L_W := \Ll_W$ satisfy equation \ref{174}($\star$). Applying $\Res_{D(1)}$ to both sides of equation \ref{174}$\star$, we find that the constants $\tau_W := \Tt_W$ satisfy equation \ref{174}(*). This completes the verification that $\Ll = L$ and $\Tt = \tau$.

An overconvergent 1-form $\om$ on $U$ with log poles at the origin has an overconvergent primitive if and only if $\Res_0 \om = 0$ and $\Res_{D(1)}\om = 0$. It follows that $L$ is overconvergent. In particular, the values $L_W(x)$ converge.

\ssegment{193}{}%%%%%
To compute $\ep$, we write
\begin{align*}
\epsilon(\phi^*W')
&=
 \sum_{k=0}^{\infty} 
 (\phi^\sharp t -\phi'^\sharp t)^k 
 \phi'^* 
 \left( \frac{\nabla^k_{\partial/\partial t} W'}{k!} \right)
\\
	&= \sum_k (\phi^\sharp t -\phi'^\sharp t)^k
	\cdot
	{\phi'}^*
	\Big(	
	\underset{|W|_1 \le j}
	{\underset{|W|_0 \le i}
	{\sum_{i+j=k}}}
	\frac {c_{i,j, W}}
		{ t^i(t-1)^j}
		WW'
		\Big) 
\\
	&= \underset{|W|_1 \le j}
	{
	\underset{|W|_0 \le i}
	{\sum_{i,j \in \NN}}
	} 
		\frac{
		c_{i,j,W} 
		\big(\phi^\sharp t -\phi'^\sharp t \big)^k
		}
		{{\phi'}^\sharp 
		\big( t^i(t-1)^j \big)
		}
		\phi'^*(WW').
\end{align*}
The coefficient of ${\phi'}^*(WW')$ is independent of $W'$; setting $\ep_W$ equal to this coefficient and valuating at $t=x$ we obtain the formula given in segment \ref{176}. The prounipotence of $\Etil$ implies convergence of $\ep$. In particular, the values $\ep_W(x)$ converge. 

\ssegment{194}{}%%%%%%
Setting
\[
v= \sum_T v_T T,
\]
the equation
\[
\epsilon(x) \theta(x)(v) = v
\]
becomes
\begin{align*}
\sum_T v_T T
	&= \epsilon(x) \theta(x) \left( \sum_V v_V V \right) 
\\
	&= \sum_V v_V \epsilon(x) 
	\left( \underset{W \supset V}{\sum_{W'}}
	 L_{W'}(x) \tau^V_W W'W \right) 
\\
	&= \sum_V v_V \underset{W \supset V}
	{\sum_{W'}} L_{W'}(x) \tau^V_W 
	\sum_U \epsilon_U(x) UW'W
\\
	&= \underset{W \supset V}{\sum_{V,U,W'}} 
	\epsilon_U(x) L_{W'}(x) \tau^V_W v_V UW'W
\\
	&= \sum_T 
	\left( 
	\underset{W\supset V}
	{\sum_{T = UW'W}}
	 \epsilon_U(x) L_{W'}(x) \tau^V_W v_V
	  \right) T. \\
\end{align*}
Hence
\begin{align*}
v_T 
	&=  \underset{W\supset V}{\sum_{T = UW'W}} \epsilon_U(x) L_{W'}(x) \tau^V_W v_V  
\\
	&= p^{|T|} v_T + \underset{V \neq X}
	{\underset{W\supset V}{\sum_{T = UW'W}}} \epsilon_U(x) L_{W'}(x) \tau^V_W v_V .
\end{align*}
So the equations
\[
v_I = 1
\]
\[
\epsilon(x)\theta(x)(v) = v
\]
are equivalent to
\[
v_I = 1
\]
and
\[
(1-p^{|T|})v_T = 
\underset{V \neq X}{\underset{W \supset V}{\sum_{T=UW'W}}}
\epsilon_U(x) L_{W'}(x) \tau^V_W v_V
\]
from which we obtain the formula for  
\[
\Li_T(x) = v_T
\]
given in the theorem.\footnote{Actually, differing sign conventions necessitate the addition of the signs seen in the final formula to accord with the traditional definition.} This completes the proof of theorem \ref{178}.

\segment{15a}{}%%%%%
We apply this to the computation of $p$-adic multiple zeta values. We have the path composition formula
\[
\int_x^z \om_n \cdots \om_1 = 
\left( 
\sum_{i=0}^n \int_y^z \om_n \cdots \om_{i+1} 
\cdot \int_x^y \om_i \cdots \om_1
\right)
\]
and the path reversal formula
\[
\int_y^x \om_n \cdots \om_1 = (-1)^n \int_x^y \om_n \cdots \om_1. 
\]
If $f: Y\to X$ is an isomorphism of rigid curves, then
\[
\int_{f(x)}^{f(y)}\om_n \cdots \om_1 = \int_x^y (f^*\om_n) \cdots (f^*\om_1).
\]
Applying this to $\om_i \in \left\{ \frac{dx}{x}, \frac{dx}{1-x} \right\}$, to
\[
x \mapsto 1-x
\]
on $\PP^1$, and to an auxiliary point $y$, we obtain
\begin{align*}
\int_y^{\vec{-1_1}} \om_n \cdots \om_1
	&= (-1)^n  \int^y_{\vec{-1_1}} \om_n \cdots \om_1 \\
	&= \int_{\vec{1_0}}^{1-y} \om_n^\circ \cdots \om_1^\circ
\end{align*}
where $\left( \frac{dx}{x}\right)^\circ = \left(  \frac{dx}{1-x} \right)$ and $\left(  \frac{dx}{1-x} \right)^\circ = \left(  \frac{dx}{x} \right)$. So
\begin{align*}
\ze(W) 
	&= \int_{\vec{1_0}}^{\vec{-1_1}} \om_W \\
	&= \sum_{W = W''W'} \int_y^{\vec{-1_1}} \om_{W''} 
		\int_{\vec{1_0}}^y \om_{W'} \\
	&=  \sum_{W = W''W'} \Li_{W''^\circ}(1-y)(\Li_{W'}y).
\end{align*}

%%%%%%%%%%%%%%%%%%%%%%%%%%%%%%%%
\section{The equation-solving algorithm}
%
%%%%%%%%%%%%%%%%%%%%%%%%%%%%%%%%

\segment{17g}{Construction of the algorithm}%%%%%%%%

\ssegment{17h}{}%%%%%%
We now construct the promised algorithm for totally real fields. Our algorithm takes as input an open integer scheme $Z$ and outputs a finite set of elements of $X(Z)$. We denote the output by $\Aa_\m{ES}(Z)$. As explained in the introduction, the success of the algorithm depends on first finding $X(Z)$ by a naive search, and then proving that there are no other points by verifying that $X(Z_\pP)_n = X(Z)$. Recall also (from our formulation of the convergence conjecture (\ref{int8})) that success (i.e. halting) depends also on the absence of repeated roots for $n$ sufficiently large.

\ssegment{17i}{}%%%%%%
We find a prime $p$ of $\ZZ$ in the image of $Z$, for which $Z$ is totally split. We fix arbitrarily a prime $\pP$ of $Z$ lying above $p$.

\ssegment{17j}{}%%%%%%%%
Our algorithm searches through the set of triples $(n, N, \ep)$, $n, N \in \NN$, $\ep$ in a countable subset of $\RR_{> 0}$ with accumulation point $0$.  After each attempt, we increase $n$ and $N$ and decrease $\ep$. To each such triple, our algorithm assigns a set $X(Z)_n$ of points of $X(Z)$ and a boolean. If the boolean output is {\it True}, then we output $X(Z)_n$. If the boolean output is {\it False}, then we continue the search. 

To produce the set $X(Z)_n$, we spend $n$ seconds searching for points.\footnote{
Evidently, this choice is arbitrary. In reality we would probably search up to a chosen height bound $B(n)$ which grows to $\infty$ as $n$ goes to $\infty$.
} To produce the boolean output, we follow the steps described in segments \ref{17k}--\ref{17o} below.

\ssegment{17k}{}%%%%%%
Partition $X(\Op)$ into $\ep$-balls, decreasing $\ep$ as needed to ensure that each ball contains at most one element of the set $X(Z)_n$ (our, potentially incomplete, list of integral points). 

\ssegment{17l}{}%%%%%
Run $\Aa_\m{Loci}(Z,p, n, \ep)$ to obtain a family $\{\Ftil_i\}_i$ of polylogarithmic functions. Symmetrize the family with respect to the $S_3$-action using the formulas \ref{int7}(*). Set $h_i$ equal to the half-weight of $\Ftil_i$.

\ssegment{17m}{}%%%%%%
We focus our attention on an $\ep$-ball $B$ containing a rational representative $y \in B$. Expand each polylogarithmic function $\Ftil_i$ to arithmetic precision $\ep$ and geometric precision $e^{-N}$ about $y$; denote the result by $\widetilde {\widetilde F^\pP_i}$.

\ssegment{18a}{}%%%%%%%
Fixing $i$, write
\[
\widetilde {\widetilde F^\pP_i} = \sum_{j=0}^N \widetilde{\atil_j}T_j.
\]
Check the following condition:

\bigskip
\begin{center}
{\it 
For each $i$ and each $j \le N$, if $\widetilde{\atil}_j \neq 0$ then
$\left| \widetilde{\atil}_j \right| \ge \ep$.
}
\end{center}

\bigskip
\noindent
If this {\it fails}, return, {\it False}.

\ssegment{17n}{}%%%%%
We continue to work with the single $\ep$-ball $B$.
Set $b$ equal to the number of points ($0$ or $1$) in $X(Z)_n \cap B$. Choose an $r \in \NN$ such that $\ep \ge p^{-r}$. Run the root-criterion algorithm $\Aa_\m{RC}(b, N, r, h_i, {\widetilde{\widetilde {F^\pP_i}}})$ for varying $i$. 

\ssegment{17o}{}%%%%%%
Repeat the steps of segments \ref{17m}--\ref{17o} for each $\ep$-ball $B$. If for each ball $B$ there exists an $i$ such that 
\[
\Aa_\m{RC}(b, N, r, h_i, \widetilde{\Ftil_i^\pP}) = \m{\it True},
\]
return {\it True}. Otherwise return {\it False}. This completes the construction of the algorithm.

\segment{17r}{Equation-solving theorem}%%%%%%%%%
We come to the main applications announced in the introduction.

\sdisp{Theorem}{17s}%%%%%%
{
Let $Z$ be an open integer scheme with fraction field $K$.
\begin{enumerate}
\item
Suppose the algorithm $\Aa_\m{ES}(Z)$ halts. Then we have
\[
\Aa_\m{ES}(Z) = X(Z).
\]
\item
Assume $K$ is totally real. Suppose Kim's conjecture (conjecture \ref{int8}) holds for $Z$ at level $n$. Suppose Zagier's conjecture (\ref{int14}) holds for $K$ and $n' \le n$. Suppose Goncharov's conjecture (\ref{int15}) holds for $Z$ and $n' \le n$. Suppose the period conjecture holds for the open subscheme $\Zo \subset Z$ constructed in segment \ref{6} in half-weights $n' \le n$. Suppose $K$ obeys the Hasse principle for finite cohomology (condition \ref{int19}) in half-weights $2 \le n' \le n$. Then $\Aa_\m{ES}(Z)$ halts.
\item
Assume $K = \QQ$. Suppose Kim's conjecture holds for $Z$ at level $n$. Suppose Goncharov's conjecture holds for $Z$ and $n' \le n$. 
Suppose the period conjecture holds for the open subscheme $\Zo \subset Z$ constructed in segment \ref{6} in half-weights $n' \le n$.
  Then $\Aa_\m{ES}(Z)$ halts.  
\end{enumerate}
}

\begin{proof}
Parts (1) and (2) are a direct application of theorem \ref{MainTh}. One point may require clarification: the role of segment \ref{18a}. Let $\{F_i^\pP\}_i$ denote the generators of the Chabauty-Kim ideal close to $\{\Ftil_i^\pP\}_i$ whose existence is guaranteed by theorem \ref{MainTh}. Fixing an $\ep$-ball $B$ with representative $y \in B$ and an $i$, write
\[
F_i^\pP = \sum_{j=1}^\infty a_jT^j
\qquad
\mbox{and}
\qquad
\Ftil^\pP_i = \sum_{j=0}^\infty \atil_jT^j
\]
for the power series expansions about $y$. Then after an admissible change in $\ep$ (depending on $N$), we have
\[
\left| a_j - \atil_j \right| < \ep
\]
for all $j \le N$. By construction, we have
\[
\left| \atil_j - \widetilde{\atil_j} \right| < \ep, 
\]
hence
\[
\left| a_j - \widetilde{\atil_j} \right| < \ep.
\]

For part (1) of the theorem, suppose that for given $B$, $i$ and $j$, we find that $|\widetilde{\atil_j}| \ge \ep$. Then by the nonarchemedian triangle inequality, we have 
\[
|\widetilde{\atil_j}| = |a_j|. 
\]
This means that those valuations whose precise determination is needed for the root criterion algorithm $\Aa_\m{RC}$, will indeed be precise. For part (2), we need only note that for $\ep$ sufficiently small depending on $N$, we will indeed have for each $\ep$-ball $B$, each $i$, and each $j \le N$, either $\widetilde{\atil_j} = 0$ or $|\widetilde{\atil_j}| \ge \ep$.

We turn to part (3). The period conjecture implies in particular that the $p$-adic zeta values $\ze^p(n')$ for $n' \in [3,n]$ odd are nonzero. In turn, this implies that the unipotent zeta values
\[
\ze^U(n') \in \Ext^1\big( \QQ(0), \QQ(n') \big)
\]
are nonzero. Since these extension groups have dimension 1 while the remaining extension groups have dimension 0, this would imply  Zagier's conjecture in this case. (In fact, since the extension groups for $n \ge 2$ don't depend on $Z$, we're free to choose any prime $p$. Choosing a regular one, where the nonvanishing is known, we obtain one possible proof of Zagier's conjecture for this case.) 

We claim that the period conjecture also implies the Hasse principle. Indeed, the Hasse principle in this case merely says that a linear map from a vector space of dimension $\le 1$ is injective. For this, it's enough to show that the map is nonzero. However, the composite map
\[
\Qp \otimes \Ext^1_\ZZ \big( \QQ(0), \QQ(n') \big) 
\to
\Ext^1_{\Zp} \big( \Qp(0), \Qp(n') \big)
\to
\Qp
\]
sends $\ze^U(n')$ to $\ze^p(n')$ which, as we've already noted, is nonzero if the period conjecture holds. 
\end{proof}

%%%%%%%%%%%%%%%%%%%%%%%%
\section{Beyond totally real fields}%%%%%%%%%
\label{Beyond}%%%%%%%%%%%%%
%%%%%%%%%%%%%%%%%%%

\segment{18b-}{}%%%%%%
We first explain the inadequacy of the methods developed above for dealing with fields which are not totally real. 

\begin{prop*} \label{+}
Let $Z$ be an open integer scheme, $n$ a natural number, $\pP \in Z$ a totally split prime, and $X(\Op)_n$ the associated polylogarithmic Chabauty-Kim locus. Suppose $Z$ is not totally real, and assume the period conjecture holds. Then $X(\Op)_n = X(\Op)$.
\end{prop*}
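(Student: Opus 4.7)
The plan is to show $\Im \reg_\pP = H^1(U^{\PL, F\phi}_{\ge -n})$ as schemes; since $X(\Op)_n := \ka_\pP\inv(\Im \reg_\pP)$ by construction, this immediately yields $X(\Op)_n = X(\Op)$.

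Using the identification of the Selmer varieties with $\Gm$-equivariant Hom-spaces (segment \ref{int21}), together with the freeness of both $U(Z)$ and $U(Z_\pP)$ as prounipotent groups, any such hom is determined by the images of the free generators of its source, which must respect weights. Since $\nN^\PL_{-i}$ is one-dimensional for each $i \in [1,n]$, source and target of $\reg_\pP$ decompose weight-by-weight, and in weight $-i$, $\reg_\pP$ identifies with $\mathrm{id}_{\nN^\PL_{-i}} \otimes \rho_i$, where
\[
\rho_i \,:\, \Qp \otimes_\QQ \Ext^1_K\bigl(\QQ(0), \QQ(i)\bigr) \longrightarrow \Ext^1_{\Op}\bigl(\Qp(0), \Qp(i)\bigr)
\]
is the $p$-adic syntomic regulator. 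Since $\pP$ is totally split, each target is one-dimensional by Bloch--Kato, so surjectivity of $\reg_\pP$ reduces to the non-vanishing of $\rho_i$ for each $i$.

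For $i \ge 2$, the hypothesis $r_2 \ge 1$ (since $Z$ is not totally real) combined with Borel's theorem (segment \ref{int11}) gives $\dim \Ext^1_K(\QQ(0), \QQ(i)) \ge r_2 \ge 1$ for $i$ even and $\ge r_1 + r_2 \ge 1$ for $i$ odd $\ge 3$, so the source of $\rho_i$ is non-zero. Non-vanishing of $\rho_i$ itself follows from the period conjecture (segment \ref{int18}): pick any $0 \ne x \in \Ext^1_K(\QQ(0), \QQ(i))$, regarded inside $A(Z)_i$ as a primitive element (the kernel of the reduced coproduct; cf.\ segment \ref{mtg}). The realization $A(Z) \to A(Z_\pP)$ is a graded Hopf-algebra morphism and so preserves the extension subspace; the image of $x$ therefore lies in $\Ext^1_{\Op}(\Qp(0),\Qp(i)) \subset A(Z_\pP)_i$ and is exactly $\rho_i(x)$. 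The period conjecture gives $\per_\pP(x) = \mathrm{ev}_{u_\pP}(\rho_i(x)) \ne 0$, forcing $\rho_i(x) \ne 0$, and since the target is one-dimensional, $\rho_i$ is surjective.

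The main wrinkle is the weight-$(-1)$ piece: when $K$ has trivial rational unit group (e.g., $K = \QQ(i)$ with $Z = \Spec \Oo_K$), $\Ext^1_K(\QQ(0), \QQ(1))$ may vanish and the argument above fails in weight $-1$. This would be handled either by enlarging $Z$ by inverting an auxiliary prime (which raises the $S$-unit rank), or by observing that the weight-$(-1)$ generator of $\nN^\PL$ corresponds to the tangential logarithm at $\oneato$, so the image of $\ka_\pP$ is confined to a subscheme on which the weight-$(-1)$ part of $\reg_\pP$ is automatically surjective onto the relevant locus. Apart from this technicality, the whole argument is formal weight-by-weight manipulation layered on top of Borel, Bloch--Kato, and the period conjecture.
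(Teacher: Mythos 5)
Your reduction of the statement to the non-vanishing of the abelian regulators $\rho_i$ is the right target, and your use of the period conjecture plus Borel to get that non-vanishing matches the paper. But the step you use to get there contains a genuine error: it is not true that $\reg_\pP$ ``identifies with $\mathrm{id}_{\nN^\PL_{-i}} \otimes \rho_i$'' weight by weight. The identification of the Selmer scheme with $\prod_i \Ext^1_Z(\QQ(0),\QQ(i)) \otimes (\nN^\PL_{\ge -n})_{-i}$ depends on a choice of free generators, and the global-to-local map (composition with $\nN(Z_\pP) \to \nN(Z)_{\Qp}$, equivalently evaluation at $I_{BC}$) does not send generators to generators: its weight $-i$ component is a polynomial map involving products of lower-weight coordinates, not a linear map. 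If the splitting you assert were correct, every Chabauty--Kim locus would be cut out by linear relations among $\log, \Li_1, \dots, \Li_n$ and the theory would carry no information beyond the abelian (classical Chabauty) level, contradicting already the depth-two equation of \cite{CKtwo}. What is true, and what the paper's proof actually uses, is compatibility of $\reg_\pP$ with the tower of level quotients: since the motivic $\Ext^2$-groups vanish, $H^1(G(Z), U(X)^\PL_{\ge -n})$ is an $\Ext^1_Z(\QQ(0),\QQ(n))$-torsor over $H^1(G(Z), U(X)^\PL_{\ge -(n-1)})$, the regulator respects the torsor structures, and surjectivity follows by induction on $n$ once each abelian regulator is nonzero onto its one-dimensional local target. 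So your argument can be repaired, but only by replacing the claimed direct-sum decomposition with this torsor/filtration induction. (A symptom of the same confusion: $\nN^\PL_{-1}$ is two-dimensional, spanned by the $\log$ and $\Li_1$ directions, not one-dimensional.)

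Your ``wrinkle'' about weight $-1$ is a legitimate observation -- the assertion that every $\Ext^1_Z(\QQ(0),\QQ(n))$ is nonzero requires $\Oo_Z^* \otimes \QQ \neq 0$, which fails precisely for $Z = \Spec \Oo_K$ with $K$ imaginary quadratic -- but neither of your proposed remedies works as stated. Inverting an auxiliary prime changes $Z$ and hence the statement being proved, and the claim that the image of $\ka_\pP$ is confined to a subscheme on which the weight $-1$ part of $\reg_\pP$ is ``automatically surjective'' is unsubstantiated: in that edge case the level-one locus is genuinely cut down (it is the locus where $\log z$ and $\log(1-z)$ both vanish), so this case falls outside the argument rather than being a removable technicality. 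The honest conclusion is that the proposition, like the paper's own proof, implicitly requires the weight-one extension group to be nonzero, which holds for every non-totally-real $Z$ except the full ring of integers of an imaginary quadratic field.
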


\begin{proof}
In this case, each motivic Ext group $\Ext^1_Z(\QQ(0), \QQ(n))$ is nonzero, and by the period conjecture, each (abelian) syntomic realization map
\[
\Ext^1_Z(\QQ(0), \QQ(n)) \to \Ext^1_{\Op} (\Qp(0), \Qp(n))
\]
is at least nonzero. Since the motivic $\Ext^2$-groups are nevertheless zero, the Selmer scheme
 $H^1(G(Z), U(X)^\PL_{\ge -n})$
is an $\Ext^1_Z(\QQ(0), \QQ(n))$-torsor over $H^1(G(Z), U(X)^\PL_{\ge -(n-1)})$. The realization map
\[
\Re_\pP: H^1(G(Z), U(X)^\PL_{\ge -n}) 
\to H^1(G(\Op), U(X)^\PL_{\ge -n})
\]
is compatible with torsor structures. It follows by induction that $\Re_\pP$ is surjective, which completes the proof of the proposition.
\end{proof}

\segment{18b}{}%%%%%%
Instead of considering the single realization map $\Re_\pP$ we may consider the product, which fits into a square similar to the one considered above
\[
\xymatrix{
X(Z) 
\ar[r] \ar[d]
&
\prod_{\pP | p} X(\Op) 
\ar[d]^-{\al}
\\
 H^1(G(Z), U(X)^\PL_{\ge -n}) 
 \ar[r]_-{\Re_p}
&
\prod_{\pP | p} H^1(G(\Op), U(X)^\PL_{\ge -n}).
}
\]
We define the \emph{big polylogarithmic Chabauty-Kim locus} by
\[
\left( \prod_{\pP | p} X(\Op)  \right)_n
:= \al\inv(\Im \Re_p).
\]
and we again symmetrize with respect to the $S_3$ action to obtain a \emph{symmetrized big polylogarithmic Chabauty-Kim locus}
\[
\left( \prod_{\pP | p} X(\Op)  \right)_n^{S_3}
.
\]
We also write $\Kk_p^\m{Big}(\nN^\PL_{\ge -n})$ and $\Kk_p^\m{Big}(\nN^\PL_{\ge -n})S_3$ for the corresponding ideals of Coleman functions, the \emph{big $p$-adic Chabauty-Kim ideal} and \emph{symmetrized big $p$-adic Chabauty-Kim ideal} respectively. We restrict ourselves as usual to the totally split case for simplicity.

\begin{conj} \label{18c}
\textit{Convergence of polylogarithmic loci, general case. Joint with David Corwin.}
Let $Z$ be an open integer scheme and $p$ a prime below $Z$, for which $Z$ is totally split. For each $n \in \NN$ let $\left( \prod_{\pP | p} X(\Op)  \right)_n^{S_3}$ denote the associated symmetrized big polylogarithmic Chabauty-Kim locus. Then for some $n$ we have 
\[
X(Z) = \left( \prod_{\pP | p} X(\Op)  \right)_n^{S_3}.
\]
\end{conj}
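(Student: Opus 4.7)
The plan is to argue in analogy with the totally real, single-prime case treated in \cite{KimTangential}, with the product of local regulators compensating for the weakness at individual primes that is exposed by proposition \ref{+}. I would proceed in two stages: a dimension-theoretic argument for finiteness of the big locus, followed by an invocation of Kim's general philosophy to close the gap between finiteness and equality with $X(Z)$.

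First, I would address finiteness of the big polylogarithmic Chabauty-Kim locus. On the abelianization, the product regulator takes the form
\[
\QQ_p \otimes \Ext^1_Z(\QQ(0), \QQ(n)) \to \prod_{\pP | p} \Ext^1_{\Op}(\Qp(0), \Qp(n)).
\]
By the Borel dimension formulas recalled in segment \ref{int11}, the source has dimension $r_1 + r_2$ (for odd $n \ge 3$) or $r_2$ (for even $n \ge 2$), while the target, because $Z$ is totally split above $p$ and each local Ext is one-dimensional, has dimension $[K:\QQ] = r_1 + 2r_2$. Consequently the codimension of the image is at least $r_2$ or $r_1 + r_2$, which is strictly positive in each weight $\ge 2$. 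Iterating this estimate along the tower of torsor structures governing the successive quotients $U^\PL_{\ge -n}$, exactly as in the proof of proposition \ref{+} but now with inequalities in place of equalities, one concludes that the scheme-theoretic image of
\[
\reg_p : H^1(G(Z), U(X)^\PL_{\ge -n}) \to \prod_{\pP | p} H^1(G(\Op), U(X)^\PL_{\ge -n})
\]
is contained in a subvariety whose codimension grows without bound in $n$. Combined with the density of $\al$ (analogous to Kim's density result cited in the discussion following the theorem of segment \ref{int6}) and the fact that $\al$ pulls algebraic functions back to Coleman-analytic functions on $\prod_{\pP|p} X(\Op)$, this produces, for $n$ sufficiently large, a nonzero element of the big ideal $\Kk_p^\m{Big}(\nN^\PL_{\ge -n})$, hence finiteness of $\left(\prod_{\pP | p} X(\Op)\right)_n$.

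Second, I would pass from finiteness to the stated equality. The inclusion $X(Z) \subset \left(\prod_{\pP | p} X(\Op)\right)_n$ is tautological from the definitions, so only the reverse inclusion requires argument. Here the plan is to mimic the informal motivation for Kim's original conjecture: if $(x_\pP)_\pP$ is a non-integral element of the big locus at level $n$, then the arithmetic information encoded by the motivic torsors at higher depth should eventually distinguish it from the image of $X(Z)$, forcing its expulsion at some level $n' \ge n$. Making this precise requires a product version of the non-abelian reciprocity heuristics of \cite{nabsd}, together with a strong enough linear independence statement for motivic $p$-adic periods simultaneously across all primes above $p$ (a natural strengthening of the period conjecture \ref{int18}).

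The main obstacle is of course this second step. Even for $K = \QQ$ with one prime, the corresponding statement is exactly the unproven Kim conjecture; passing to the product of all primes above $p$ over a general number field compounds the difficulty by requiring simultaneous control over all $\pP$. A realistic intermediate goal would be to establish the finiteness half unconditionally (the first step above is essentially complete modulo the cited ingredients), and then to reduce the equality to a precise, product-form strengthening of the period conjecture that can be investigated numerically via the algorithms developed in this paper.
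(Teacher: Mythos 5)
This statement is not a theorem of the paper: it is stated (and labeled) as a \emph{conjecture} --- the ``general case'' of Kim's conjecture, adapted to the big polylogarithmic locus --- and the paper offers no proof of it, nor claims one. It is open even in the simplest case $K=\QQ$, $n$ large, which is Kim's original conjecture from \cite{nabsd}; the paper's results only show that the conjecture's truth would make the point-counting algorithm halt with correct output. So there is no ``paper proof'' to match your attempt against, and your proposal should not be presented as a proof of the statement.

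Within your own argument the gap is concrete and you partly acknowledge it. Your first stage (codimension growth of $\Im \reg_p$ in the product of local Selmer varieties, hence a nonzero element of $\Kk_p^\m{Big}(\nN^\PL_{\ge -n})$ and finiteness of $\bigl(\prod_{\pP | p} X(\Op)\bigr)_n$ for $n$ large) is a reasonable adaptation of Kim's dimension count and of the mechanism behind the proposition of segment \ref{18b-}, and it is indeed the standard \emph{motivation} for the conjecture; but note that even here you need the analogue of Kim's density and Coleman-functoriality statements for the map $\al$, which the paper cites only in the single-prime, totally real setting. Your second stage, however, is circular: ``expelling'' every non-integral point of the big locus at some finite level is precisely the content of the conjecture being proved, and appealing to ``non-abelian reciprocity heuristics'' plus an unproven product-form strengthening of the period conjecture \ref{int18} does not supply an argument --- it replaces one open conjecture by another at least as strong. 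The honest conclusion is the one the paper draws: finiteness (conditionally, or unconditionally in the totally real case via Kim's theorem) is accessible, while the equality $X(Z) = \bigl(\prod_{\pP | p} X(\Op)\bigr)_n$ must remain a conjecture, to be tested numerically with the algorithms constructed here rather than proved by the methods of this paper.
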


\segment{18d}{}%%%%%%
A straightforward modification of the algorithm $\Aa_\m{Loci}$ yields an algorithm which produces approximate generators for the ideal of polylogarithmic functions defining the big polylogarithmic loci. Its output includes an algebra basis $\Bb$ of $A(\Zo)_\le{n}$ for $\Zo$ an open subscheme of $Z$, as well as a family of elements $\Ftil_i$ of the polynomial ring
\[
\QQ \left[ \Bb, \{ \log t_{\pP} \}_\pP, \{ \Li_i t_{\pP} \}_{i, \pP} \right].
\] 
We denote its output by $\Aa_\m{Big-Loci}(Z,p,n,\ep)$. The result is a theorem which is analogous to the one stated above.

\Theorem{18e}{
Let $Z$ be an open integer scheme, $p$ a prime over which $Z$ it totally split, $n$ a natural number, and $\ep \in p^\ZZ$. 
\begin{enumerate}
\item
Suppose $\Aa_\m{Big-Loci}(Z,p,n,\ep)$ halts. Then there are functions $\{ F^\pP_i\}$ generating the big $p$-adic Chabauty-Kim ideal $\Kk_p^\m{Big}(\nN^\PL_{\ge -n})$ associated to $\nN^\PL_{\ge -n}$ such that 
\[
\left| \widetilde F_i^\pP - F_i^\pP \right| < \ep
\] 
for all $i$.
\item
Suppose Zagier's conjecture (conjecture \ref{int14}) holds for $K$ and $n' \le n$. Suppose Goncharov exhaustion (conjecture \ref{int15}) holds for $Z$ and $n' \le n$. Suppose the period conjecture holds for the open subscheme $\Zo \subset Z$ constructed in segment \ref{6} in half-weights $n' \le n$. Suppose $K$ obeys the Hasse principle for finite cohomology (condition \ref{int19}) in half-weights $2 \le n' \le n$. Then the computation $\Aa_\m{Loci}(Z,p, n, \ep)$ halts.
\end{enumerate}
}

\appendix

%%%%%%%%%%%%%%%%%
\section{A minor erratum}%%%%%%%%%%
%%%%%%%%%%%%%%%

\segment{}{}%%%%%%%
The article \cite{GonMPMTM} contains a minor error: if lemma 3.7 of that article were true, our algorithm could be greatly simplified. However, Cl\'ement Dupont has pointed out the following simple counterexample: $(\log^U 2)\ze^U(3)$ is ramified at $2$. To see this, note that $A(\Spec \ZZ)_4 = 0$, that $A(\Spec \QQ)$ is an integral domain, and that both $\log^U 2$ and $\ze^U(3)$ are nonzero. However, since both of these elements are contained in the space of extensions, in the notation of that article, we have $\Delta'_{[4]}\big((\log^U 2)\ze^U(3)\big) =0$.

%%%%%%%%%%%%%%%%%%%%
%%%%%%%%%%%%%%%%%%%
\bibliography{MTMUEII_Refs}%%%%%%
%%%%%%%%%%%%%%%%%%
\bibliographystyle{alphanum}%%%
%%%%%%%%%%%%%%%%%
%%%%%%%%%%%%%%%%%

\vfill

\noindent
\Small\textsc{Department of mathematics \\ Ben-Gurion University of the Negev\\ Be'er Sheva, Israel}\\ {Email address:} \texttt{ishaidc@gmail.com}

\end{document}